\theoremstyle{plain} % イタリック体
\newtheorem{theorem}{Theorem}[section] % 見出しはスモールキャップ
\newtheorem{lemma}[theorem]{Lemma}
\newtheorem{corollary}[theorem]{Corollary}
\newtheorem{proposition}[theorem]{Proposition}
\newtheorem{claim}{Claim}
\def\address#1#2{\begingroup
\noindent\parbox[t]{7.8cm}{%
\small{\scshape\ignorespaces#1}\par\vskip1ex
\noindent\small{\itshape E-mail address}%
\/: #2\par\vskip4ex}\hfill%
\endgroup}%
\newcommand{\SU }{\mathop{\rm SU}\nolimits}
\newcommand{\E }{\mathop{\cal E}\nolimits}
\newcommand{\Aut }{\mathop{\rm Aut}\nolimits}
\newcommand{\T }{\mathop{\rm T}\nolimits}
\newcommand{\TT }{\mathop{\cal T}\nolimits}
\newcommand{\rank}{\mathop{\rm rank}\nolimits}
\newcommand{\degw}{\deg}
\newcommand{\zs}{\{ 0\} }
\newcommand{\id}{{\rm id}}
\newcommand{\sm}{\setminus}
\newcommand{\A}{{\cal A}}
\newcommand{\R}{{\bf R}}
\newcommand{\Q}{{\bf Q}}
\newcommand{\N}{{\bf N}}
\newcommand{\Z}{{\bf Z}}
\newcommand{\e}{{\bf e}}
\newcommand{\w}{{\bf w}}
\newcommand{\Zn}{{\Z _{\geq 0}}}
\newcommand{\ep}{\epsilon}
\newcommand{\sym}{\mathfrak{S}}
\newcommand{\kx}{k[{\bf x}]}
\newcommand{\y}{{\bf y}}
\newcommand{\ky}{k[{\bf y}]}
\newcommand{\kyy}{k[{\bf y},{\bf y}^{-1}]}
\begin{document}

\title{Shestakov-Umirbaev reductions 
and Nagata's conjecture on a polynomial automorphism}

\author{Shigeru Kuroda$^{*}$}

\date{}

\maketitle

\footnote{
2000 \textit{Mathematics Subject Classification}.
Primary 14R10; Secondary 13F20. }
\footnote{
\textit{Key words and phrases}. 
Polynomial automorphisms, Tame generators problem. }
\footnote{ 
$^{*}$Partly supported by the Grant-in-Aid for 
Young Scientists (Start-up) 19840041, 
The Ministry of Education, Culture, Sports, Science and Technology, Japan. }

\begin{abstract}
In 2003, Shestakov-Umirbaev solved Nagata's conjecture 
on an automorphism of a polynomial ring. 
In the present paper, 
we reconstruct their theory by using the 
``generalized Shestakov-Umirbaev inequality", 
which was recently given by the author. 
As a consequence, 
we obtain a more precise tameness criterion for polynomial automorphisms. 
In particular, 
we deduce that no tame automorphism of a polynomial ring 
admits a reduction of type IV\null. 
\end{abstract}

\section{Introduction}
\label{sect:intro}
\setcounter{equation}{0}

Let $k$ be a field, 
$n$ a natural number, 
and $\kx =k[x_1,\ldots ,x_n]$ 
the polynomial ring in $n$ variables over $k$. 
In the present paper, 
we discuss the structure of the automorphism group 
$\Aut _k\kx $ of $\kx $ over $k$. 
Let $F:\kx \to \kx $ be an endomorphism of $\kx $ over $k$. 
We identify $F$ with 
the $n$-tuple $(f_1,\ldots ,f_n)$ 
of elements of $\kx $, 
where $f_i=F(x_i)$ for each $i$. 
Then, 
$F$ is an automorphism of $\kx $ if and only if 
the $k$-algebra $\kx $ is generated by 
$f_1,\ldots ,f_n$. 
Note that the sum $\deg F:=\sum _{i=1}^n\deg f_i$ 
of the total degrees of $f_1,\ldots ,f_n$ 
is at least $n$ whenever $F$ is an automorphism. 
An automorphism $F$ is said to be {\it affine} if $\deg F=n$, 
in which case there exist $(a_{i,j})_{i,j}\in GL _n(k)$ 
and $(b_i)_i\in k^n$ such that 
$f_i=\sum _{j=1}^na_{i,j}x_j+b_i$ for each $i$. 
We say that $F$ is {\it elementary} if there exist 
$l\in \{ 1,\ldots ,n\} $ and 
$\phi \in k[x_1,\ldots ,x_{l-1},x_{l+1},\ldots ,x_n]$ 
such that $f_l=x_l+\phi $ and $f_i=x_i$ for each $i\neq l$. 
The subgroup $\T _k\kx $ of $\Aut _k\kx $ 
generated by affine automorphisms and elementary automorphisms 
is called the {\it tame subgroup}, 
and elements of $\T _k\kx $ are called {\it tame automorphisms}.

It is a fundamental question in polynomial ring theory 
whether $\T _k\kx =\Aut _k\kx $ holds for each $n$. 
The equality is obvious if $n=1$. 
It also holds true if $n=2$, 
which was shown by Jung~\cite{Jung} 
in 1942 when $k$ is a field of characteristic zero, 
and by van der Kulk~\cite{Kulk} in 1953 when $k$ is an arbitrary field. 
This is a consequence of the result that 
every automorphism but an affine automorphism of $\kx $ 
admits an elementary reduction if $n=2$. 
Here, 
we say that $F$ {\it admits an elementary reduction} 
if $\deg F\circ E<\deg F$ for some 
elementary automorphism $E$, 
that is, 
there exist $l\in \{ 1,\ldots ,n\} $ and 
$\phi \in k[f_1,\ldots ,f_{l-1},f_{l+1},\ldots ,f_n]$ 
such that $\deg (f_l+\phi )<\deg f_l$. 
In case of $n=2$, 
it follows from the result that, 
for each $F\in \Aut _k\kx $ with $\deg F>2$, 
there exist elementary automorphisms 
$E_1,\ldots ,E_r$ for some $r\in \N $ 
such that 
$$
\deg F>\deg F\circ E_1>\cdots >\deg F\circ E_1\circ \cdots \circ E_r=2. 
$$
%for each $F\in \Aut _k\kx $ with $\deg F>2$. 
This implies that $F$ is tame.

When $n=3$, 
the structure of $\Aut _k\kx $ becomes far more difficult. 
In 1972, 
Nagata~\cite{Nagata} conjectured that 
the automorphism 
\begin{equation}\label{eq:nagata}
F=(x_1-2(x_1x_3+x_2^2)x_2-(x_1x_3+x_2^2)^2x_3,
x_2+(x_1x_3+x_2^2)x_3,
x_3)
\end{equation}
is not tame. 
This famous conjecture was finally solved in the affirmative 
by Shestakov-Umirbaev~\cite{SU2} in 2003 
for a field $k$ of characteristic zero. 
Therefore, 
$\T _k\kx $ is not equal to $\Aut _k\kx $ if $n=3$. 
We note that 
the question remains open for $n\geq 4$.

Shestakov-Umirbaev~\cite{SU2} showed that, 
if $\deg F>3$ for $F\in \T _k\kx $, 
then $F$ admits an elementary reduction, 
or there exists a sequence of elementary automorphisms 
$E_1,\ldots ,E_r$ such that 
$\deg F\circ E_1\circ \cdots \circ E_r<\deg F$, 
where $r\in \{ 2,3,4\} $. 
In the latter case, 
$F$ satisfies some special conditions, 
and is said to {\it admit a reduction of type I, II, III or IV} 
according to the conditions. 
Nagata's automorphism is not affine, 
and does not admit neither an elementary reduction nor 
any one of the four types of reductions. 
Therefore, Shestakov-Umirbaev concluded that 
Nagata's automorphism is not tame. 
We note that there exist 
tame automorphisms which 
admit reductions of type I 
(see~\cite{EMW},~\cite{typeI} and \cite{SU2}). 
However, 
it is not known whether there exist 
automorphisms admitting reductions of the other types.

To prove the criterion above, 
Shestakov-Umirbaev~\cite[Theorem 3]{SU1} 
showed an inequality concerning the total degrees of polynomials, 
which was used as a crucial tool. 
This inequality was recently generalized by the author~\cite{SU}. 
The purpose of this paper 
is to reconstruct the Shestakov-Umirbaev theory 
using the generalized inequality. 
As a consequence, 
we obtain a more precise tameness criterion 
for polynomial automorphisms. 
In particular, 
we deduce that no tame automorphism of $\kx $ admits 
a reduction of type IV (Theorem~\ref{thm:type IV}).

The main theorem (Theorem~\ref{thm:main1}) is formulated in Section~\ref{sect:main} 
using the notion of the weighted degree of a differential form. 
In Section~\ref{sect:ineq}, 
we derive some consequences of 
the generalized Shestakov-Umirbaev inequality. 
In Section~\ref{sect:SUred}, 
we investigate properties of ``Shestakov-Umirbaev reductions", 
which is roughly speaking a generalization and refinement of 
the notions of reductions of types I, II and III\null. 
In Section~\ref{sect:heart}, 
we prove some technical propositions 
which form the core of the proof of the main theorem. 
The main theorem is proved in Section~\ref{sect:proof} 
with the aid of the results in Sections~\ref{sect:ineq}, 
\ref{sect:SUred} and \ref{sect:heart}. 
In Section~\ref{sect:remark}, 
we discuss relations with the original theory of Shestakov-Umirbaev. 
We conclude with some remarks and an appendix.

\section{Main result}\label{sect:main}
\setcounter{equation}{0}

In what follows, 
we assume that the field $k$ is of characteristic zero. 
Let $\Gamma $ be a finitely generated totally ordered $\Z $-module, 
and $\w =(w_1,\ldots ,w_n)$ 
an $n$-tuple of elements of $\Gamma $ 
with $w_i>0$ for $i=1,\ldots ,n$. 
Since a finitely generated totally ordered $\Z $-module 
is necessarily free, 
we sometimes regard $\Gamma $ as a $\Z $-submodule of 
$\Q \otimes _{\Z }\Gamma $. 
We define the $\w $-{\it weighted grading} 
$\kx =\bigoplus _{\gamma \in \Gamma }\kx _{\gamma }$  
by setting $\kx _{\gamma }$ 
to be the $k$-vector subspace of $\kx $ generated by monomials 
$x_1^{a_1}\cdots x_n^{a_n}$ with $\sum _{i=1}^na_iw_i=\gamma $ 
for each $\gamma \in \Gamma $. 
For $f\in \kx \sm \zs $, 
we define the $\w $-{\it degree} $\deg _{\w }f$ of $f$ 
to be the maximum among 
$\gamma \in \Gamma $ with $f_{\gamma }\neq 0$, 
where $f_{\gamma }\in \kx _{\gamma }$ for each $\gamma $ 
such that $f=\sum _{\gamma \in \Gamma }f_{\gamma }$. 
We define $f^{\w }=f_{\delta }$, 
where $\delta =\deg _{\w }f$. 
In case $f=0$, 
we set $\deg _{\w } f=-\infty $, 
i.e., 
a symbol which is less than any element of $\Gamma $. 
For example, 
if $\Gamma =\Z $ and $w_i=1$ for $i=1,\ldots ,n$, 
then the $\w $-degree is the same as the total degree. 
For each $k$-vector subspace $V$ of $\kx $, 
we define $V^{\w }$ to be the $k$-vector subspace of $\kx $ 
generated by $\{ f^{\w }\mid f\in V\sm \zs \} $. 
For each $l$-tuple $F=(f_1,\ldots ,f_l)$ 
of elements of $\kx $ for $l\in \N $, 
we define $\deg _{\w }F=\sum _{i=1}^l\deg_{\w }f_i$. 
For each $\sigma \in  \sym _l$, 
we define $F_{\sigma }=(f_{\sigma (1)},\ldots ,f_{\sigma (l)})$, 
where $\sym _l$ is the symmetric group of $\{ 1,\ldots ,l\} $. 
The identity permutation is denoted by $\id $. 
For distinct $i_1,\ldots ,i_r\in \{ 1,\ldots ,l\} $, 
the cyclic permutation with 
$i_1\mapsto i_2,i_2\mapsto i_3,\ldots ,i_r\mapsto i_1$ 
is denoted by $(i_1,\ldots ,i_r)$.

The $\w $-degree of a differential form 
was defined by the author~\cite{SU}. 
Let $\Omega _{\kx /k}$ be the module of differentials of $\kx $ over $k$, 
and $\bigwedge ^l\Omega _{\kx /k}$ 
the $l$-th exterior power of the $\kx $-module $\Omega _{\kx /k}$ 
for $l\in \N $. 
Then, 
we may uniquely express each 
$\omega \in \bigwedge ^l\Omega _{\kx /k}$ as 
$$
\omega =\sum _{1\leq i_1<\cdots <i_l\leq n}
f_{i_1,\ldots ,i_l}dx_{i_1}\wedge \cdots \wedge dx_{i_l},
$$
where $f_{i_1,\ldots ,i_l}\in \kx $ for each $i_1,\ldots ,i_l$. 
Here, 
$df$ denotes the differential of $f$ for each $f\in \kx $. 
We define the $\w $-degree of $\omega $ by 
\begin{equation*}
\deg _{\w }\omega =\max \{ \deg_{\w }
f_{i_1,\ldots ,i_l}x_{i_1}\cdots x_{i_l}
\mid 1\leq i_1<\cdots <i_l\leq n\} . 
\end{equation*}
By the assumption that $\omega _i>0$ for $i=1,\ldots ,n$, 
it follows that 
\begin{equation}\label{eq:differential 1}
\deg _{\w }\omega 
\geq \min 
\{ w_{i_1}+\cdots +w_{i_l}\mid 
1\leq i_1<\cdots <i_l\leq n\} >0
\end{equation}
if $\omega \neq 0$. 
For each $f\in \kx \sm k$, 
we have 
\begin{equation}\label{eq:deg df = deg f}
\deg _{\w }df
=\max \{ \deg _{\w }f_{x_i}x_i\mid i=1,\ldots ,n\} 
=\deg _{\w }f, 
\end{equation}
since $df=\sum _{i=1}^nf_{x_i}dx_i$. 
Here, 
$f_{x_i}$ denotes the partial derivative of $f$ in $x_i$ 
for each $f\in \kx $ and $i\in \{ 1,\ldots ,n\} $. 
We remark that $df_1\wedge \cdots \wedge df_l\neq 0$ 
if and only if 
$f_1,\ldots ,f_l$ are algebraically independent over $k$ 
for $f_1,\ldots ,f_l\in \kx $ 
(cf.~\cite[Proposition 1.2.9]{Essen}). 
By definition, 
it follows that 
\begin{equation}\label{eq:ineq-wedge}
\deg _{\w }df_1\wedge \cdots \wedge df_l
\leq \sum _{i=1}^l\deg _{\w }df_i
=\sum _{i=1}^l\deg _{\w }f_i. 
\end{equation}
In (\ref{eq:ineq-wedge}), 
the equality holds if and only if 
$f_1^{\w },\ldots ,f_l^{\w }$ 
are algebraically independent over $k$. 
Actually, 
we can write 
$df_1\wedge \cdots \wedge df_l
=df_1^{\w }\wedge \cdots \wedge df_l^{\w }+\eta $, 
where $\eta \in \bigwedge ^l\Omega _{\kx /k}$ 
with $\deg _{\w }\eta <\sum _{i=1}^l\deg _{\w }df_i$, 
and $\deg _{\w }df_1^{\w }\wedge \cdots \wedge df_l^{\w }
=\sum _{i=1}^l\deg _{\w }df_i$ 
if $df_1^{\w }\wedge \cdots \wedge df_l^{\w }\neq 0$. 
Therefore, if $f_1,\ldots ,f_n\in \kx $ 
are algebraically independent over $k$, 
then 
\begin{equation}\label{eq:auto deg lower bound}
\sum _{i=1}^n\deg _{\w }f_i
=\sum _{i=1}^n\deg _{\w }df_i\geq 
\deg _{\w }df_1\wedge \cdots \wedge df_n
\geq \sum _{i=1}^nw_i=:|\w |
\end{equation}
by (\ref{eq:differential 1}), 
(\ref{eq:deg df = deg f}) 
and (\ref{eq:ineq-wedge}). 
As will be shown 
in Lemma~\ref{lem:|omega|}(i), 
$F$ is tame 
if $\deg _{\w }F=|\w |$ for $F\in \Aut _k\kx $.

Now, assume that $n\geq 3$, 
and let $\TT $ be the set of triples 
$F=(f_1,f_2,f_3)$ of elements of $\kx $ 
such that $f_1$, $f_2$ and $f_3$ 
are algebraically independent over $k$. 
We identify each $F\in \TT $ with the injective 
homomorphism $F:\ky \to \kx $ of $k$-algebras defined by 
$F(y_i)=f_i$ for $i=1,2,3$, where 
$\ky =k[y_1,y_2,y_3]$ is the polynomial ring 
in three variables over $k$. 
In the case where $n=3$, 
we identify $\ky $ with $\kx $ 
by the identification $y_i=x_i$ for each $i$. 
Let $\E _i$ denote the set of elementary 
automorphisms $E$ of $\ky $ such that $E(y_j)=y_j$ 
for each $j\neq i$ for $i\in \{ 1,2,3\} $, 
and set $\E =\bigcup _{i=1}^3\E _i$. 
We say that 
$F\in \TT $ 
{\it admits an elementary reduction} for the weight $\w $ 
if $\deg _{\w }F\circ E<\deg _{\w }F$ 
for some $E\in \E $, 
and call $F\circ E$ an {\it elementary reduction} of 
$F$ for the weight $\w $.

Let $F=(f_1,f_2,f_3)$ and $G=(g_1,g_2,g_3)$ 
be elements of $\TT $. 
We say that the pair $(F,G)$ satisfies the 
{\it Shestakov-Umirbaev condition} for the weight $\w $ 
if the following conditions hold:

\medskip

(SU1) $g_1=f_1+af_3^2+cf_3$ and $g_2=f_2+bf_3$ 
for some $a,b,c\in k$, 
and $g_3-f_3$ belongs to $k[g_1,g_2]$; 

(SU2) $\deg _{\w }f_1\leq \deg _{\w }g_1$ 
and $\deg _{\w }f_2=\deg _{\w }g_2$; 

(SU3) $(g_1^{\w })^2\approx (g_2^{\w })^s$ 
for some odd number $s\geq 3$; 

(SU4) $\deg _{\w }f_3\leq \deg _{\w }g_1$, 
and $f_3^{\w }$ does not belong to 
$k[g_1^{\w }, g_2^{\w }]$; 

(SU5) 
$\deg _{\w }g_3<\deg _{\w }f_3$; 

(SU6) 
$\deg _{\w }g_3<\deg _{\w }g_1-\deg _{\w }g_2 
+\deg _{\w }dg_1\wedge dg_2$. 

\medskip

Here, 
$h_1\approx h_2$ 
(resp.\ $h_1\not\approx h_2$) denotes that 
$h_1$ and $h_2$ are linearly dependent 
(resp.\ linearly independent) over $k$ 
for each $h_1,h_2\in \kx \sm \zs $. 
We say that $F\in \TT $ 
{\it admits a Shestakov-Umirbaev reduction} 
for the weight $\w $ 
if there exist $G\in \TT $ and $\sigma \in \sym _3$ 
such that $(F_{\sigma },G_{\sigma })$ satisfies 
the Shestakov-Umirbaev condition, 
and call this $G$ a {\it Shestakov-Umirbaev reduction} of 
$F$ for the weight $\w $. 
As will be discussed in Section~\ref{sect:SUred}, 
$F$ and $G$ have various properties when $(F,G)$ 
satisfies the Shestakov-Umirbaev condition. 
For example, 
it follows from (SU1)--(SU6) that 
$\deg _{\w }G<\deg _{\w }F$ (Property (P6)).

Here is the main theorem.

\begin{theorem}\label{thm:main1}
Assume that $n=3$, 
and $\w =(w_1,w_2,w_3)$ 
is a triple of elements of $\Gamma $ with 
$w_i>0$ for $i=1,2,3$. 
If $\deg _{\w }F>|\w |$ for 
a tame automorphism $F$ of $\kx $, 
then $F$ admits an elementary reduction for the weight $\w $
or a Shestakov-Umirbaev reduction for the weight $\w $. 
\end{theorem}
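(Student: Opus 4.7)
I plan to argue by induction on the length $m$ of a shortest tame expression $F = \alpha_1 \circ \cdots \circ \alpha_m$, after handling the base and making a preliminary reduction. If $m \leq 1$, then $F$ is affine --- impossible since $\deg _{\w }F > |\w |$ --- or elementary, in which case $F^{-1}$ is itself elementary and visibly exhibits $F$ as admitting an elementary reduction. So $m \geq 2$, and I may further assume that $F$ admits no elementary reduction for $\w $, aiming to produce a Shestakov-Umirbaev reduction. After absorbing affine factors, I take $\alpha _m$ to be elementary and set $G := F \circ \alpha _m^{-1}$, a tame map of strictly shorter expression length. Since $F$ admits no elementary reduction, $\deg _{\w }G \geq \deg _{\w }F > |\w |$, and the inductive hypothesis applies to $G$, delivering either an elementary or a Shestakov-Umirbaev reduction of $G$ for $\w $.

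The central task is to transport this reduction of $G$ back across $\alpha _m$ to a reduction of $F$ --- which, being elementary being ruled out by assumption, must be a Shestakov-Umirbaev reduction. Concretely, if the reduction of $G$ is given by a triple $H \in \TT $ and a permutation $\sigma $ with $(G_{\sigma }, H_{\sigma })$ satisfying (SU1)--(SU6), I would identify the corresponding data for $F$ by tracking the leading forms $g_i^{\w }$ through composition with $\alpha _m^{-1}$ and reading off the analogous relations for $f_i^{\w }$. The structural facts about Shestakov-Umirbaev reductions collected in Section~\ref{sect:SUred} (in particular the property (P6) announced in the text, which guarantees $\deg _{\w }H < \deg _{\w }G$) together with the consequences of the generalized Shestakov-Umirbaev inequality drawn in Section~\ref{sect:ineq} should control how these leading forms evolve.

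The hard part --- and the one I expect to carry the technical bulk of the argument --- is precisely this transport step. Composition with the elementary $\alpha _m$ can switch which coordinate is ``leading'', can introduce cancellations among $\w $-leading forms, and can obscure the odd-power algebraic dependence $(g_1^{\w })^2 \approx (g_2^{\w })^s$ required by (SU3). Dealing with these possibilities forces a delicate case analysis based on the fine information about $\deg _{\w }df_i \wedge df_j$ supplied by the generalized inequality, and this is where I expect the core propositions of Section~\ref{sect:heart} to intervene: they should upgrade the degree-level inequalities into precise algebraic identities among leading forms, allowing me to assemble a triple $H' \in \TT $ and a permutation $\sigma ' \in \sym _3$ such that $(F_{\sigma '}, H'_{\sigma '})$ verifies the Shestakov-Umirbaev condition for $\w $, closing the induction and proving Theorem~\ref{thm:main1}.
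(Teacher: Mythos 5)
Your outer induction is the wrong engine, and this is a genuine gap rather than a stylistic difference. The statement you are inducting on --- ``every tame $F$ of shorter word length with $\deg_{\w}F>|\w|$ admits one elementary or Shestakov-Umirbaev reduction'' --- is too weak to be transported across the last elementary factor $\alpha_m$. Knowing a single reduction step for $G=F\circ\alpha_m^{-1}$ tells you almost nothing about $F$ when $\alpha_m$ interferes with the coordinate involved in that reduction; to control $F$ you are forced to iterate reductions of $G$ and of auxiliary automorphisms (such as $G$ composed with further elementary maps chosen mid-argument), and these auxiliary automorphisms typically have \emph{longer} tame expressions than $F$, so your induction hypothesis never applies to them. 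The paper resolves exactly this by strengthening the statement and changing the induction variable: it introduces the class $\A$ of automorphisms admitting an entire chain of elementary/quasi-Shestakov-Umirbaev reductions terminating at degree $|\w|$, and proves the closure property of Proposition~\ref{prop:key} (if $F\in\A$ and $\degw F\circ E\leq\degw F$ for $E\in\E$, then $F\circ E\in\A$) by induction on $\degw F$ --- legitimate only because the value set of degrees is well-ordered, Lemma~\ref{lem:|omega|}(ii) --- and simultaneously with the companion Proposition~\ref{prop:key2}. All the delicate transport work you defer to Sections~\ref{sect:ineq}--\ref{sect:heart} is in fact carried out inside that degree induction (see Claims~\ref{claim:1}--\ref{claim:4}, which repeatedly invoke Proposition~\ref{prop:key} at strictly smaller degree for compositions like $F\circ E_i\circ E''$); the induction on word length survives in the paper only as the trivial final step $\T_k\kx\subseteq\A$ once Proposition~\ref{prop:key} is available. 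Without the strengthened $\A$-membership statement, the well-ordering argument, and the simultaneous Proposition~\ref{prop:key2}, your transport step cannot be closed.

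Two smaller problems: in the weighted setting your base case is wrong --- an affine automorphism can perfectly well have $\degw F>|\w|$ when the $w_i$ are not all equal (this is why the paper needs Lemma~\ref{lem:|omega|}(i) and the triangularization argument there), so ``$m\leq 1$ affine is impossible'' does not stand; and ``absorbing affine factors'' so that the last factor is elementary is itself not free, since general linear factors scramble the $\w$-grading --- the paper instead writes a tame $F$ as a diagonal automorphism followed by elementary ones and feeds that decomposition into Proposition~\ref{prop:key}.
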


In the case where $n=3$ and $\Gamma =\Z $, 
the condition that $F$ admits a Shestakov-Umirbaev reduction for 
the weight $\w =(1,1,1)$ implies that 
$F$ admits an elementary reduction 
or a reduction of one of the types I, II and III 
(Proposition~\ref{prop:sured to 123}). 
In view of this, 
the reader who is familiar with the theory of Shestakov-Umirbaev 
may notice that no tame automorphism of $\kx $ 
admits a reduction of type IV (Theorem~\ref{thm:type IV}). 
In fact, 
if $F$ admits a reduction of type IV, 
then there exists an elementary automorphism $E$ 
such that $F\circ E$ admits a reduction of type IV, 
but does not admit an elementary reduction 
nor any one of the reductions of types I, II and III (cf.~Appendix). 
In Section~\ref{sect:remark}, 
however, 
we prove this result more directly.

We remark that 
$F$ admits an elementary reduction for the weight $\w $ 
if and only if $f_i^{\w }$ belongs to 
$k[f_j,f_l]^{\w }$ for some $i\in \{ 1,2,3\} $, 
where $j,l\in \{ 1,2,3\} \sm \{ i\} $ with $j<l$. 
In the case where $\deg _{\w }f_1$, 
$\deg _{\w }f_2$ and $\deg _{\w }f_3$ 
are pairwise linearly independent over $\Z $, 
this condition implies that 
$\deg _{\w }f_i$ belongs to the subsemigroup 
of $\Gamma $ generated by $\deg _{\w }f_j$ and $\deg _{\w }f_l$. 
Indeed, 
each $\phi \in k[f_j,f_l]\sm \zs $ 
is a linear combination of $f_j^pf_l^q$ 
for $(p,q)\in (\Zn )^2$ over $k$, 
in which $\deg _{\w }f_j^pf_l^q\neq 
\deg _{\w }f_j^{p'}f_l^{q'}$ 
if and only if $(p,q)\neq (p',q')$. 
Here, $\Zn $ denotes the set of nonnegative integers. 
Hence, 
$\deg _{\w }\phi =\deg _{\w }f_j^pf_l^q=p\deg _{\w }f_i+q\deg _{\w }f_l$ 
for some $p,q\in \Zn $.

Note that 
$\delta :=(1/2)\deg _{\w }f_2=(1/2)\deg _{\w }g_2$ 
belongs to $\Gamma $ by (SU2) and (SU3). 
As will be shown in Section~\ref{sect:SUred}, 
(SU1)--(SU6) imply that 
$\delta <\deg _{\w }f_i\leq s\delta $ 
for each $i\in \{ 1,2,3\} $ (Property (P7)). 
Since $\delta >0$, it follows that $\deg _{\w }f_i<s\deg _{\w }f_j$ 
for each $i,j\in \{ 1,2,3\} $. 
Therefore, 
if $F$ admits a Shestakov-Umirbaev reduction for the weight $\w $, 
then $F$ satisfies the following conditions: 

\smallskip 

(i) One of 
$(1/2)\deg _{\w }f_1$, 
$(1/2)\deg _{\w }f_2$ and 
$(1/2)\deg _{\w }f_3$ 
belongs to $\Gamma $. 

\smallskip 

(ii) 
For each $i,j\in \{ 1,2,3\} $, 
there exists $l\in \N $ such that 
$\deg _{\w }f_i<l\deg _{\w }f_j$. 

\smallskip

Now, 
we deduce that Nagata's automorphism is not tame 
by means of Theorem~\ref{thm:main1}. 
Let $\Gamma =\Z ^3$ equipped with the lexicographic order, i.e., 
the ordering defined by $a\leq b$ for $a,b\in \Z ^3$ 
if the first nonzero component of $b-a$ is positive, 
and let $\w =(\e _1,\e _2,\e _3)$, 
where 
$\e _i$ is the $i$-th standard unit vector of $\R ^3$ 
for each $i$. 
Then, we have 
$$
\deg _{\w }f_1=(2,0,3),\ 
\deg _{\w }f_2=(1,0,2),\ 
\deg _{\w }f_3=(0,0,1). 
$$
Hence, 
$\deg _{\w }F=(3,0,6)>(1,1,1)=|\w |$. 
The three vectors above are pairwise 
linearly independent over $\Z $, 
while any one of them is not contained in the subsemigroup 
of $\Z ^3$ generated by the other two vectors. 
Hence, $F$ does not admit an elementary reduction 
for the weight $\w $. 
Since $(1/2)\deg _{\w }f_i$ does not belong to 
$\Z ^3$ for each $i\in \{ 1,2,3\} $, 
we know that $F$ does not admit a Shestakov-Umirbaev reduction 
for the weight $\w $. 
By the definition of the lexicographic order, 
$l\deg _{\w }f_3=(0,0,l)$ is less than $\deg _{\w }f_i$ 
for $i=1,2$ for any $l\in \N $, 
which also implies that $F$ does not admit a Shestakov-Umirbaev reduction 
for the weight $\w $. 
Therefore, 
we have the following corollary to Theorem~\ref{thm:main1}.

\begin{corollary}\label{cor:Nagata}
Nagata's automorphism defined in $(\ref{eq:nagata})$ is not tame. 
\end{corollary}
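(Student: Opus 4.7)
The plan is to argue by contraposition of Theorem~\ref{thm:main1}: exhibit a weight $\w $ for which Nagata's automorphism $F$ has $\deg _{\w }F>|\w |$, yet $F$ admits neither an elementary reduction nor a Shestakov-Umirbaev reduction for the weight $\w $. The clever choice is to work over the non-Archimedean ordered group $\Gamma =\Z ^3$ with the lexicographic order and take $\w =(\e _1,\e _2,\e _3)$. This gives a very fine invariant: cancellations in the total-degree sense are refused by the lex order whenever the leading components differ, so the leading $\w $-forms reveal more structure than the usual total-degree forms do.

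First, I would compute the three $\w $-degrees explicitly from (\ref{eq:nagata}), obtaining
$\deg _{\w }f_1=(2,0,3)$, $\deg _{\w }f_2=(1,0,2)$, $\deg _{\w }f_3=(0,0,1)$, and in particular $\deg _{\w }F=(3,0,6)>(1,1,1)=|\w |$. Next I would rule out an elementary reduction: by the remark recalled just before the statement, an elementary reduction would force $\deg _{\w }f_i$ to lie in the subsemigroup of $\Gamma $ generated by the other two $\deg _{\w }f_j$, provided these three vectors are pairwise $\Z $-linearly independent. Pairwise independence is immediate by inspecting the coordinates, and a direct check shows that none of the three vectors is a nonnegative integer combination of the other two. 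Hence no elementary reduction exists.

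For Shestakov-Umirbaev reductions I would use conditions (i) and (ii) extracted in the excerpt from (SU1)--(SU6). Condition (i) requires one of $(1/2)\deg _{\w }f_i$ to lie in $\Gamma =\Z ^3$, but each coordinate vector $(2,0,3), (1,0,2), (0,0,1)$ has an odd entry, so halving never lands in $\Z ^3$. Alternatively, condition (ii) fails at $(i,j)=(1,3)$: for every $l\in \N $ we have $l\deg _{\w }f_3=(0,0,l)<(2,0,3)=\deg _{\w }f_1$ because the first coordinate of the difference is positive. Either obstruction rules out a Shestakov-Umirbaev reduction.

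Combining these two impossibilities with $\deg _{\w }F>|\w |$, Theorem~\ref{thm:main1} forces $F$ to be non-tame. The main conceptual step, and the only place where real thought is needed, is the choice of the lexicographic weight: it is what makes the simple numerical obstructions (i) and (ii) effective, whereas in the standard total-degree grading $f_1,f_2,f_3$ have degrees $5,3,1$, none of these obstructions hold, and the argument would fail. Everything else is a routine verification from the explicit formulas for $f_1,f_2,f_3$.
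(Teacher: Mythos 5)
Your proposal is correct and follows essentially the same route as the paper: the same lexicographic weight $\w =(\e _1,\e _2,\e _3)$ on $\Gamma =\Z ^3$, the same computation of the $\w $-degrees $(2,0,3)$, $(1,0,2)$, $(0,0,1)$, the same semigroup argument ruling out elementary reductions, and the same obstructions (i) and (ii) ruling out Shestakov-Umirbaev reductions before applying Theorem~\ref{thm:main1}. No gaps to report.
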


We define the {\it rank} of $\w $ 
as the rank of the $\Z $-submodule of $\Gamma $ 
generated by $w_1,\ldots ,w_n$. 
If $\rank \w =n$, 
then the dimension of the $k$-vector space $\kx _{\gamma }$ 
is at most one for each $\gamma $. 
Consequently, 
$\deg _{\w }f=\deg _{\w }g$ if and only if 
$f^{\w }\approx g^{\w }$ 
for each $f,g\in \kx \sm \zs $. 
In such a case, 
the assertion of Theorem~\ref{thm:main1} 
can be proved more easily than the general case. 
In fact, 
a few steps can be skipped in the proof. 
We note that $\w =(\e _1,\e _2,\e _3)$ 
has maximal rank three, 
and therefore it suffices to prove 
the assertion of Theorem~\ref{thm:main1} 
in this special case 
to conclude that Nagata's automorphism is not tame.

\section{Inequalities}\setcounter{equation}{0}
\label{sect:ineq}

In this section, 
we derive some consequences from the generalized 
Shestakov-Umirbaev inequality~\cite[Theorem 2.1]{SU}. 
In what follows, 
we denote ``$\deg _{\w }$" by ``$\degw $" 
for the sake of simplicity. 
Let $g$ be a nonzero element of $\kx $, 
and $\Phi =\sum _i\phi _iy^i$ a nonzero polynomial 
in a variable $y$ over $\kx $, 
where $\phi _i\in \kx $ for each $i\in \Zn $. 
We define $\deg _{\w }^g\Phi $ to be the maximum among 
$\degw \phi _ig^i$ for $i\in \Zn $. 
Then, 
$\deg _{\w }^g\Phi $ is not less than 
the $\w $-degree of 
$\Phi (g):=\sum _i\phi _ig^i$ in general. 
On the other hand, 
$\deg _{\w }^g\Phi ^{(i)}=\degw \Phi ^{(i)}(g)$ 
holds for sufficiently large $i$, 
where $\Phi ^{(i)}$ 
denotes the $i$-th order derivative of $\Phi $ in $y$. 
We define $m_{\w }^g(\Phi )$ 
to be the minimal $i\in \Zn $ such that 
$\deg _{\w }^g\Phi ^{(i)}=\degw \Phi ^{(i)}(g)$.

In the notation above, 
the generalized Shestakov-Umirbaev inequality 
is stated as follows. 
This inequality plays an important role in our theory, 
yet the proof is quite simple and short.

\begin{theorem}[{\cite[Theorem 2.1]{SU}}]\label{thm:inequality}
Assume that $f_1,\ldots ,f_r\in \kx $ 
are algebraically independent over $k$, 
where $1\leq r\leq n$. 
Then, 
$$
\degw \Phi (g)\geq \deg _{\w }^g\Phi +m_{\w }^g(\Phi )
(\degw \omega \wedge dg -\degw \omega -\degw g)
$$
holds for each $\Phi \in k[f_1,\ldots ,f_r][y]\sm \zs $ 
and $g\in \kx \sm \zs $, where $\omega =df_1\wedge \cdots \wedge df_r$. 
\end{theorem}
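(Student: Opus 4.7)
\emph{Proof plan.} The proof will proceed by induction on $m:=m_{\w}^g(\Phi)$. The engine of the induction is the identity
\[
d\Phi(g)\wedge \omega \;=\;\pm\,\Phi'(g)\,\omega\wedge dg,
\]
which holds because each coefficient $\phi_i\in k[f_1,\ldots,f_r]$ has $d\phi_i$ in the $\kx$-submodule generated by $df_1,\ldots,df_r$, so $d\phi_i\wedge \omega=0$; only the terms obtained by differentiating the powers of $g$ in $d\Phi(g)=\sum_i g^i d\phi_i+\sum_i i\phi_i g^{i-1}dg$ survive the wedge with $\omega$.

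First I would dispose of the degenerate cases. If $\omega\wedge dg=0$ then the right-hand side of the desired inequality is $-\infty$ by (\ref{eq:differential 1}), so nothing is to prove. Otherwise $f_1,\ldots,f_r,g$ are algebraically independent over $k$, hence the substitution $y\mapsto g$ is injective on $k[f_1,\ldots,f_r][y]$; this ensures $\Phi(g)\neq 0$ and guarantees $\Phi(g)\in k$ only when $\Phi\in k$, in which case $m=0$ and the inequality is trivial. From now on I assume $\Phi(g)\notin k$, so (\ref{eq:deg df = deg f}) gives $\degw d\Phi(g)=\degw \Phi(g)$.

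The base case $m=0$ is just the definition of $m_{\w}^g$: $\degw \Phi(g)=\deg_{\w}^g\Phi$. For the inductive step $m\geq 1$, take $\w$-degrees of the boxed identity and apply (\ref{eq:deg df = deg f}) on the left and (\ref{eq:ineq-wedge}) on the right to get
\[
\degw \Phi(g)\;\geq\;\degw \Phi'(g)+\degw g+\alpha,\qquad \alpha:=\degw(\omega\wedge dg)-\degw\omega-\degw g.
\]
Because $\operatorname{char} k=0$, the condition $m\geq 1$ means the top-degree terms of $\sum_i\phi_i g^i$ cancel, which forces at least two indices $i$ to realise the maximum of $\degw(\phi_i g^i)$; in particular at least one such $i$ is $\geq 1$, whence $\deg_{\w}^g \Phi'=\deg_{\w}^g \Phi-\degw g$. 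Combined with the evident equality $m_{\w}^g(\Phi')=m-1$, feeding in the inductive hypothesis $\degw \Phi'(g)\geq \deg_{\w}^g\Phi'+(m-1)\alpha$ closes the argument.

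I expect the main obstacle to be the bookkeeping rather than any deep content: one must verify cleanly that the differential-form identity holds and that the right interpretation of ``cancellation in $\Phi(g)$'' gives $\deg_{\w}^g\Phi'=\deg_{\w}^g\Phi-\degw g$ (the characteristic-zero hypothesis is used precisely here, to ensure the $\phi_i$ with $i\geq 1$ are not killed by differentiation). Once the identity and this index bookkeeping are in place, the degree estimates in (\ref{eq:deg df = deg f}) and (\ref{eq:ineq-wedge}) do all the work.
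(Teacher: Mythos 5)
Your plan is correct: the identity $d\Phi(g)\wedge\omega=\Phi^{(1)}(g)\,dg\wedge\omega$, the reduction of the degenerate case $\omega\wedge dg=0$, the observation that $m_{\w}^g(\Phi)\geq 1$ forces at least two indices to attain $\deg_{\w}^g\Phi$ (so that $\deg_{\w}^g\Phi^{(1)}=\deg_{\w}^g\Phi-\degw g$ and $m_{\w}^g(\Phi^{(1)})=m_{\w}^g(\Phi)-1$), and induction on $m_{\w}^g(\Phi)$ assemble into a complete proof, with only cosmetic caveats (characteristic zero enters via $\degw(i\phi_i)=\degw\phi_i$, and the wedge subadditivity you need is the easy general form of (\ref{eq:ineq-wedge})). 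Note that this paper does not prove the theorem but quotes it from \cite{SU}, where the ``simple and short'' proof is essentially the same argument you describe.
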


Here is a remark (see \cite[Section 3]{SU} for detail). 
Define $\Phi ^{\w ,g}=\sum _{i\in I}\phi _i^{\w }y^i$ 
for each $\Phi \in \kx [y]$, 
where $I$ is the set of $i\in \Zn $ such that 
$\degw \phi _ig^i=\deg _{\w }^g\Phi $. 
Then, 
$(\Phi ^{(i)})^{\w ,g}=(\Phi ^{\w ,g})^{(i)}$ 
holds for each $i$. 
Moreover, 
$\deg _{\w }^g\Phi =\degw \Phi (g)$ if and only if 
$\Phi ^{\w ,g}(g^{\w })\neq 0$. 
Hence, 
$m_{\w }^g(\Phi )$ is equal to the minimal $i\in \Zn $ 
such that $(\Phi ^{\w ,g})^{(i)}(g^{\w })\neq 0$. 
Since $k$ is of characteristic zero, 
this implies that $g^{\w }$ 
is a multiple roof of $\Phi ^{\w ,g}$ of order 
$m_{\w }^g(\Phi )$.

Now, let $S=\{ f,g\} $ be a subset of $\kx $ 
such that $f$ and $g$ are 
algebraically independent over $k$, 
and $\phi $ a nonzero element of $k[S]$. 
We can uniquely express 
$\phi =\sum _{i,j}c_{i,j}f^ig^j$, 
where $c_{i,j}\in k$ for each $i,j\in \Zn $. 
We define $\degw ^S\phi $ to be the maximum among 
$\degw f^ig^j$ for $i,j\in \Zn $ with $c_{i,j}\neq 0$. 
We will frequently use the fact that, 
if $\phi ^{\w }$ does not belong to $k[f^{\w },g^{\w }]$, 
or if $\degw \phi <\degw f$ and $\phi $ 
does not belong to $k[g]$, 
then $\degw \phi <\degw ^S\phi $.

The following lemma is a consequence of Theorem~\ref{thm:inequality}. 
The statement (i) is an analogue of 
Shestakov-Umirbaev~\cite[Corollary 1]{SU2}, 
but the statement (ii) is essentially new.

\begin{lemma}\label{lem:degS1}
Let $S=\{ f,g\} $ be as above, 
and $\phi $ a nonzero element of $k[S]$ 
such that $\degw \phi <\degw ^S\phi $. 
Then, there exist $p,q\in \N $ with $\gcd (p,q)=1$ 
such that $(g^{\w })^p\approx (f^{\w })^q$. 
Furthermore, 
the following assertions hold$:$

{\rm (i)} 
$\degw \phi \geq q\degw f+\degw df\wedge dg-\degw f-\degw g$. 

{\rm (ii)} 
Let $h$ be an element of $\kx $ such that 
$f$, $g$ and $h$ are algebraically independent over $k$. 
If $\degw (h+\phi )<\degw h$, 
then 
\begin{align*}
\degw (h+\phi )&\geq q\degw f+\degw df\wedge dg\wedge dh
-\degw df\wedge dh-\degw g. 
\end{align*}
\end{lemma}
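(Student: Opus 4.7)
The plan is to realise $\phi$ as $\Phi(g)$ for a polynomial $\Phi$ in an auxiliary indeterminate $y$ and then invoke Theorem~\ref{thm:inequality}. Writing $\phi=\sum_{i,j}c_{i,j}f^ig^j$ uniquely, set $\Phi(y)=\sum_{i,j}c_{i,j}f^iy^j\in k[f][y]$, so that $\phi=\Phi(g)$ and $\deg _{\w }^g\Phi=\deg _{\w }^S\phi$. Theorem~\ref{thm:inequality} applied with $r=1$, $f_1=f$, $\omega=df$ then gives
\[
\deg _{\w }\phi\geq \deg _{\w }^g\Phi+m\bigl(\deg _{\w }df\wedge dg-\deg _{\w }f-\deg _{\w }g\bigr),
\]
where $m:=m_{\w }^g(\Phi)$. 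The hypothesis $\deg _{\w }\phi<\deg _{\w }^g\Phi$ forces $m\geq 1$, and then the bracketed expression must be strictly negative; by the equality case of (\ref{eq:ineq-wedge}) this means $f^{\w }$ and $g^{\w }$ are algebraically dependent, so $(f^{\w })^q=\lambda (g^{\w })^p$ for some coprime $p,q\in\N$ and $\lambda\in k^{\times}$ (in particular $q\deg _{\w }f=p\deg _{\w }g$).

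The core structural estimate I need is $\deg _{\w }^g\Phi\geq qm\deg _{\w }f$. To prove it, I would observe that the $(i,j)$ contributing to $\Phi^{\w ,g}$ all satisfy $i\deg _{\w }f+j\deg _{\w }g=\deg _{\w }^g\Phi$, so by $\gcd (p,q)=1$ they form an arithmetic progression $(i_0+qt,\,j_0-pt)$ with $t\in\{0,1,\dots ,N\}$. Introducing $u=(f^{\w })^{-q}y^p$, one can factor
\[
\Phi^{\w ,g}(y)=(f^{\w })^{i_N}y^{j_N}P(u),\qquad P\in k[u],\ \deg P\leq N.
\]
Since $\mathrm{char}\, k=0$, the substitution $y\mapsto u$ has nonvanishing derivative at $y=g^{\w }$ (where $u=\lambda^{-1}$), so the multiplicity of $g^{\w }$ as a root of $\Phi^{\w ,g}(y)$ equals that of $\lambda^{-1}$ as a root of $P$; hence $N\geq\deg P\geq m$, giving $i_N\geq qm$ and $\deg _{\w }^g\Phi\geq i_N\deg _{\w }f\geq qm\deg _{\w }f$. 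Substituting back yields $\deg _{\w }\phi\geq m(q\deg _{\w }f+\deg _{\w }df\wedge dg-\deg _{\w }f-\deg _{\w }g)$; if the parenthesised quantity is nonnegative then $m\geq 1$ preserves the bound in (i), and if it is negative then $\deg _{\w }\phi\geq 0$ already exceeds it.

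For (ii), I would replay the argument with $\Psi(y):=h+\Phi(y)\in k[f,h][y]$ in place of $\Phi$, applying Theorem~\ref{thm:inequality} with $r=2$, $f_1=f$, $f_2=h$, $\omega=df\wedge dh$. The hypothesis $\deg _{\w }(h+\phi)<\deg _{\w }h$ forces $\deg _{\w }\phi=\deg _{\w }h$, hence $\deg _{\w }h<\deg _{\w }^S\phi=\deg _{\w }^g\Phi$. Since $h$ enters only the $y^0$-coefficient of $\Psi$ and has $\w $-degree strictly below $\deg _{\w }^g\Phi$, a short case analysis shows $\Psi^{\w ,g}=\Phi^{\w ,g}$; in particular $m':=m_{\w }^g(\Psi)=m$ and $\deg _{\w }^g\Psi=\deg _{\w }^g\Phi\geq qm\deg _{\w }f$. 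The same dichotomy on the sign of $q\deg _{\w }f+\deg _{\w }df\wedge dh\wedge dg-\deg _{\w }df\wedge dh-\deg _{\w }g$ completes (ii).

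The main obstacle I foresee is the structural estimate $\deg _{\w }^g\Phi\geq qm\deg _{\w }f$: I must justify the factorisation through $u=(f^{\w })^{-q}y^p$ carefully, verify that root multiplicity transfers under this substitution (where $\mathrm{char}\, k=0$ is essential), and handle corner cases where the arithmetic progression of exponents meets the boundary $i=0$ or $j=0$. A secondary subtlety in (ii) is confirming $\Psi^{\w ,g}=\Phi^{\w ,g}$, which relies on the strict inequality $\deg _{\w }h<\deg _{\w }^g\Phi$ to guarantee that the added $h$ does not disturb the top form.
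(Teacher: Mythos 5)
Your proposal is correct and follows essentially the same route as the paper: realize $\phi=\Phi(g)$ with $\Phi\in k[f][y]$, apply Theorem~\ref{thm:inequality}, prove the key estimate $\deg _{\w }^g\Phi \geq q\, m_{\w }^g(\Phi )\degw f$ by factoring $\Phi ^{\w ,g}$ through $u=(f^{\w })^{-q}y^p$ and comparing root multiplicities (using characteristic zero), and handle (ii) via $\Psi =h+\Phi $ over $k[f,h]$ with $\omega =df\wedge dh$, noting $\Psi ^{\w ,g}=\Phi ^{\w ,g}$ since $\degw h<\deg _{\w }^g\Phi $. The only cosmetic deviation is that you obtain $(g^{\w })^p\approx (f^{\w })^q$ by citing the standard fact that algebraically dependent $\w $-homogeneous elements have proportional powers, whereas the paper reads this off directly from the vanishing $\Phi ^{\w ,g}(g^{\w })=0$ and the same factorization; both are sound.
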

\begin{proof}\rm
Let $\Phi =\sum _{i,j}c_{i,j}f^iy^j$ be an element of $k[f][y]$ 
such that $\Phi (g)=\phi $, 
where $c_{i,j}\in k$ for each $i,j\in \Zn $, 
and let $J$ be the set of $(i,j)\in (\Zn )^2$ 
such that $c_{i,j}\neq 0$ and 
$\degw f^ig^j=\degw ^S\phi $. 
Then, we have $\deg _{\w }^g\Phi =\degw ^S\phi $ and 
$$
\Phi ^{\w ,g}=\sum _{(i,j)\in J}c_{i,j}(f^{\w })^iy^j. 
$$
Since $\degw \phi <\degw ^S\phi $ by assumption, 
we get $\degw \Phi (g)<\deg _{\w }^g\Phi $. 
Hence, 
$m_{\w }^g(\Phi )\geq 1$ and 
$\Phi ^{\w ,g}(g^{\w })=0$ as mentioned. 
In particular, 
$J$ contains at least two elements, 
say $(i,j)$ and $(i',j')$, 
since $\Phi ^{\w ,g}\neq 0$, 
$g^{\w }\neq 0$ and $\Phi ^{\w ,g}(g^{\w })=0$. 
Then, 
$(i-i')\degw f=(j'-j)\degw g$. 
Since $\degw f>0$ and $\degw g>0$, 
this implies that $q\degw f=p\degw g$ 
for some $p,q\in \N $ with $\gcd (p,q)=1$. 
For each $(i_1,j_1),(i_2,j_2)\in J$, 
there exists $l\in \Z $ such that 
$i_2-i_1=-ql$ and $j_2-j_1=pl$. 
Hence, we can find 
$(i_0,j_0)\in J$ and $m\in \N $ 
such that 
$J$ is contained in 
$\{ (i_0-ql,j_0+pl)\mid l=0,\ldots ,m\} $, 
and 
$(i_0-qm,j_0+pm)$ belongs to $J$. 
Note that $qm\leq i_0$. 
Putting $c_l'=c_{i_0-ql,j_0+pl}$ for each $l$, 
we get 
$$
\Phi ^{\w ,g}=(f^{\w })^{i_0}y^{j_0}
\sum _{l=0}^mc_l'(f^{\w })^{-ql}y^{pl}
=c_m'(f^{\w })^{i_0}y^{j_0}\prod _{l=1}^m
\left( (f^{\w })^{-q}y^{p}-\alpha _l\right) , 
$$
where $\alpha _1,\ldots ,\alpha _m$ 
are the roots of the equation 
$\sum _{l=0}^mc_l'y^l=0$ in an algebraic closure of $k$. 
Since 
$\Phi ^{\w ,g}(g^{\w })=0$, 
we get $(f^{\w })^{-q}(g^{\w })^{p}=\alpha _l$ 
for some $l$. 
Then, $\alpha _l$ belongs to $k\sm \zs $, 
because $f^{\w }$ and $g^{\w }$ 
are in $\kx \sm \zs $. 
Therefore, 
$(g^{\w })^p\approx (f^{\w })^q$. 
This proves the first assertion. 
By the expression above, 
we know that $\Phi ^{\w ,g}$ cannot have 
a multiple root of order greater than $m$. 
Hence, 
$m_{\w }^g(\Phi )\leq m$. 
Thus, it follows that 
\begin{equation}\label{eq:degm}
\deg _{\w }^g\Phi =\degw ^S\phi =\degw f^{i_0}g^{j_0}
\geq i_0\degw f\geq qm\degw f\geq qm_{\w }^g(\Phi )\degw f. 
\end{equation}
By Theorem~\ref{thm:inequality}, together with 
(\ref{eq:deg df = deg f}) and (\ref{eq:degm}), 
we get 
\begin{align*}
&\degw \phi 
=\degw \Phi (g)
\geq \deg _{\w }^g\Phi +m_{\w }^g(\Phi )
(\degw df\wedge dg-\degw f-\degw g) \\
&\quad \geq qm_{\w }^g(\Phi )\degw f+m_{\w }^g(\Phi )
(\degw df\wedge dg-\degw f-\degw g) 
\geq m_{\w }^g(\Phi )M, 
\end{align*}
where $M=q\degw f+\degw df\wedge dg-\degw f-\degw g$. 
Since $m_{\w }^g(\Phi )\geq 1$, 
the assertion (i) follows from the inequality above 
if $M>0$. 
If $M\leq 0$, 
then (i) is clear, 
since $\degw \phi \geq 0$.

To show (ii), 
consider the polynomial $\Psi :=h+\Phi $ 
in $y$ over $k[f,h]$. 
Recall that 
$\degw \phi <\degw ^S\phi =\deg _{\w }^g\Phi $. 
By the assumption that $\degw (h+\phi )<\degw h$, 
we get $\degw \phi =\degw h$. 
Hence, 
$\degw h<\deg _{\w }^g\Phi $. 
Thus, we have 
$\deg _{\w }^g\Psi =\deg _{\w }^g\Phi $ 
and $\Psi ^{\w ,g}=\Phi ^{\w ,g}$, 
and so $m_{\w }^g(\Psi )=m_{\w }^g(\Phi )$. 
Therefore, 
$\deg _{\w }^g\Psi \geq qm_{\w }^g(\Psi )\degw f$ 
by (\ref{eq:degm}). 
By Theorem~\ref{thm:inequality}, we obtain 
\begin{align*}
&\degw (h+\phi )=\degw \Psi (g)\\ 
&\quad \geq 
\deg _{\w }^g\Psi +m_{\w }^g(\Psi )M'
\geq qm_{\w }^g(\Psi )\degw f+m_{\w }^g(\Psi )M'
%&\quad 
\geq m_{\w }^g(\Psi )(q\degw f+M'), 
\end{align*}
where 
$M'=\degw df\wedge dh\wedge dg-\degw df\wedge dh-\degw g$. 
Since $m_{\w }^g(\Psi )=m_{\w }^g(\Phi )\geq 1$, 
the inequality above implies the inequality in (ii). 
\end{proof}

Let $p$ and $q$ 
be natural numbers such that 
$\gcd (p,q)=1$ and $2\leq p<q$. 
We claim that the following assertions hold: 

(i) $pq-p-q>0$. 

(ii) If $pq-p-q\leq q$, then $p=2$ and $q\geq 3$ is an odd number. 

(iii) If $pq-p-q\leq p$, then $p=2$ and $q=3$. \\
We leave to the reader to check them.

\begin{lemma}\label{lem:degS2}
Let $f$, $g$, $\phi $ 
and $p$, $q$ be as in Lemma~{\rm \ref{lem:degS1}}. 

{\rm (i)} 
Assume that $f^{\w }$ does not belong to $k[g^{\w }]$, 
and $g^{\w }$ does not belong to $k[f^{\w }]$. 
Then, $\degw \phi>\degw df\wedge dg$. 

{\rm (ii)} 
Assume that $\degw f<\degw g$, 
$\degw \phi \leq \degw g$, 
and $g^{\w }$ does not belong to $k[f^{\w }]$. 
Then, 
$p=2$, $q\geq 3$ is an odd number, 
$\delta :=(1/2)\degw f$ belongs to $\Gamma $, and
\begin{equation}\label{eq:lem:degS2:ii}
\degw \phi \geq (q-2)\delta +\degw df\wedge dg,\quad 
\degw d\phi \wedge df\geq q\delta +\degw df\wedge dg. 
\end{equation}
If $\degw \phi \leq \degw f$, 
then $q=3$. 
\end{lemma}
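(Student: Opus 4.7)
The plan is to derive both parts from Theorem~\ref{thm:inequality} combined with Lemma~\ref{lem:degS1} and the explicit factorization of $\Phi^{\w,g}$ from the proof of the latter.

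For (i), the hypotheses $f^{\w}\not\in k[g^{\w}]$ and $g^{\w}\not\in k[f^{\w}]$ rule out $p=1$ and $q=1$, so $p,q\geq 2$, and by $\gcd(p,q)=1$ we may assume $2\leq p<q$. Assertion (i) of the remark just before the lemma then gives $pq-p-q>0$. Substituting $\degw g=(q/p)\degw f$ into the bound of Lemma~\ref{lem:degS1}(i) and rearranging yields
\[
\degw\phi \;\geq\; \frac{pq-p-q}{p}\,\degw f \;+\; \degw df\wedge dg,
\]
so positivity of $\degw f$ gives the strict inequality.

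For (ii), $g^{\w}\not\in k[f^{\w}]$ again forces $p\geq 2$, and $\degw f<\degw g$ with $q\degw f=p\degw g$ forces $q>p$. Combining Lemma~\ref{lem:degS1}(i) with $\degw\phi\leq\degw g$ gives $2\degw g\geq(q-1)\degw f+\degw df\wedge dg$, which after eliminating $\degw g$ becomes $(\,p-q(p-2)\,)\degw f\geq p\,\degw df\wedge dg>0$. Together with $q>p$ this rules out $p\geq 3$ and forces $p=2$; then $\gcd(2,q)=1$ makes $q$ odd and $q>2$ gives $q\geq 3$. Setting $\delta=(1/2)\degw f$, a B\'ezout argument using $2\delta=\degw f\in\Gamma$, $q\delta=\degw g\in\Gamma$, and $\gcd(2,q)=1$ places $\delta$ in $\Gamma$, after which substituting $\degw f=2\delta$, $\degw g=q\delta$ into Lemma~\ref{lem:degS1}(i) immediately gives the first half of (\ref{eq:lem:degS2:ii}). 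The final clause then follows by combining this first inequality with $\degw\phi\leq\degw f=2\delta$, which forces $\degw df\wedge dg\leq(4-q)\delta$ and hence $q<4$.

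The main obstacle is the second inequality of (\ref{eq:lem:degS2:ii}). Writing $\phi=\Phi(g)$ with $\Phi\in k[f][y]$ as in the proof of Lemma~\ref{lem:degS1}, a direct computation gives $d\phi\wedge df=-\Phi'(g)\,df\wedge dg$, so since $\degw(h\,df\wedge dg)=\degw h+\degw df\wedge dg$ for any nonzero $h\in\kx$ (using that $\kx$ is a domain), one has $\degw d\phi\wedge df=\degw\Phi'(g)+\degw df\wedge dg$, and it suffices to show $\degw\Phi'(g)\geq q\delta$. I would obtain this by applying Theorem~\ref{thm:inequality} to $\Phi'$. Since $(\Phi')^{\w,g}=(\Phi^{\w,g})'$ and $g^{\w}$ is a multiple root of $\Phi^{\w,g}$ of order $m:=m_{\w}^g(\Phi)$, the multiplicity drops to $m-1$ for $(\Phi^{\w,g})'$, giving $m_{\w}^g(\Phi')=m-1$; and the factorization of $\Phi^{\w,g}$ from the proof of Lemma~\ref{lem:degS1} (with $j_0+pm\geq p=2$ ensuring that the top term survives differentiation) yields $\deg_{\w}^g\Phi'\geq qm\,\degw f-\degw g$. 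Feeding this into Theorem~\ref{thm:inequality} and simplifying with $\degw f=2\delta$, $\degw g=q\delta$ reduces the target to the manifestly nonnegative $(m-1)(q-2)\delta+(m-1)\degw df\wedge dg\geq 0$, which holds because $m\geq 1$ and $q\geq 3$. The delicate step is correctly tracking the multiplicity drop on differentiating and confirming that the leading part of $\Phi'$ is exactly $(\Phi^{\w,g})'$ so that the sharpened lower bound of Theorem~\ref{thm:inequality} becomes available.
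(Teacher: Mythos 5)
Your proposal is correct and follows essentially the same route as the paper: part (i) and the first half of (ii) come from substituting $q\degw f=p\degw g$ into Lemma~\ref{lem:degS1}(i), and the second inequality of (\ref{eq:lem:degS2:ii}) is obtained, exactly as in the paper, by passing to $\Phi^{(1)}$, noting $\deg_{\w}^g\Phi^{(1)}\geq qm_{\w}^g(\Phi)\degw f-\degw g$ and $m_{\w}^g(\Phi^{(1)})=m_{\w}^g(\Phi)-1$, applying Theorem~\ref{thm:inequality}, and using $d\phi\wedge df=\Phi^{(1)}(g)\,dg\wedge df$. The only differences (deriving $p=2$ directly rather than via the displayed claim, and justifying the degree and multiplicity facts for $\Phi^{(1)}$ from the factorization of $\Phi^{\w,g}$) are cosmetic.
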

\begin{proof}\rm
Since $p\degw g=q\degw f$ and $\gcd (p,q)=1$, 
it follows that $\delta :=(1/p)\degw f$ belongs to $\Gamma $. 
By Lemma~\ref{lem:degS1}(i), we have 
\begin{align}\label{eq:degS21}
\degw \phi \geq 
p\degw g+\degw df\wedge dg-\degw f-\degw g 
=(pq-p-q)\delta +\degw df\wedge dg. 
\end{align}

Since $(g^{\w})^p\approx (f^{\w })^q$ and $\gcd (p,q)=1$, 
the assumptions of (i) imply $2\leq p<q$ or $2\leq q<p$. 
Hence, $pq-p-q>0$ as claimed above. 
Therefore, 
$\degw \phi>\degw df\wedge dg$ by (\ref{eq:degS21}), 
proving (i).

In case (ii), 
we have $2\leq p<q$, 
since $g^{\w }$ does not belong to $k[f^{\w }]$. 
Since $\degw \phi \leq \degw g=q\delta $ by assumption, 
(\ref{eq:degS21}) yields $pq-p-q<q$. 
Thus, 
$p=2$, and $q\geq 3$ is an odd number 
by the claim. 
By substituting $p=2$, 
we obtain from (\ref{eq:degS21}) 
the first inequality of (\ref{eq:lem:degS2:ii}). 
To show the second inequality of (\ref{eq:lem:degS2:ii}), 
consider $\Phi \in k[f][y]$ defined in the proof of Lemma~\ref{lem:degS1}. 
Recall that $m_{\w }^g(\Phi )\geq 1$, 
and $pm_{\w }^g(\Phi )\degw g
=qm_{\w }^g(\Phi )\degw f\leq \deg _{\w }^g\Phi $ by (\ref{eq:degm}). 
By definition, 
$\deg _{\w }^g\Phi ^{(1)}=\deg _{\w }^g\Phi -\degw g$ 
and $m_{\w }^g(\Phi ^{(1)})=m_{\w }^g(\Phi )-1$. 
Since $p=2$ and 
$\degw f<\degw g$, 
it follows from Theorem~\ref{thm:main1} that
\begin{align*}
\degw \Phi ^{(1)}(g)
&\geq \deg _{\w }^g\Phi ^{(1)}+m_{\w }^g(\Phi ^{(1)})M'' \\
&=\deg _{\w }^g\Phi -\degw g+(m_{\w }^g(\Phi )-1)M'' \\
&\geq 2m_{\w }^g(\Phi )\degw g-\degw g+(m_{\w }^g(\Phi )-1)M'' \\
&=(m_{\w }^g(\Phi )-1)
( \degw df\wedge dg-\degw f+\degw g)+\degw g\\ 
&\geq \degw g=q\delta , 
\end{align*}
where $M''=\degw df\wedge dg-\degw f-\degw g$. 
Since 
$d\phi =\left(\sum _{i,j}c_{i,j}if^{i-1}g^j\right) df
+\Phi ^{(1)}(g)dg$, 
we have $d\phi \wedge df=\Phi ^{(1)}(g)dg\wedge df$. 
Therefore, 
$$
\degw d\phi \wedge df=
\degw \Phi ^{(1)}(g)+\degw df\wedge dg\geq q\delta +\degw df\wedge dg. 
$$
This proves the second inequality of (\ref{eq:lem:degS2:ii}). 
If $\degw \phi \leq \degw f$, 
then $pq-p-q<p$ by (\ref{eq:degS21}), 
since $\deg f=p\delta $. 
Hence, $q=3$ as claimed above. 
\end{proof}

The following remark is useful. 
Assume that $f,g,h\in \kx $ and $\phi \in k[S]$ 
satisfy (i)--(iv) as follows, 
where $S=\{ f,g\} $: 

(i) $f$ and $g$ are algebraically independent over $k$; 

(ii) $\degw f<\degw g$ and $\degw h<\degw g$; 

(iii) $g^{\w }$ and $h^{\w }$ 
do not belong to $k[f^{\w }]$; 

(iv) $\degw (h+\phi )<\degw h$. \\
Then, we claim that 
$\deg \phi <\deg ^S\phi $, 
and that $f$, $g$ and $\phi $ satisfy 
the assumptions of Lemma~\ref{lem:degS2}(ii). 
In fact, 
$\phi ^{\w }\approx h^{\w }$ by (iv), 
and $h^{\w }$ does not belong to $k[f^{\w },g^{\w }]$, 
since $h^{\w }$ does not belong to $k[f^{\w }]$ by (iii), 
and $\deg h<\deg g$ by (ii). 
Hence, 
$\phi ^{\w }$ does not belong to $k[f^{\w },g^{\w }]$. 
Because $\phi $ is an element of $k[f,g]$, 
we get $\degw \phi <\degw ^S\phi $. 
By (ii) and (iii), 
it follows that 
$\degw f<\degw g$, $\deg \phi =\deg h<\deg g$, 
and $g^{\w }$ does not belong to $k[f^{\w }]$. 
Thus, 
$f$, $g$ and $\phi $ 
satisfy the required conditions. 
Therefore, 
the conclusion of Lemma~\ref{lem:degS2}(ii) 
holds in this situation.

The following theorem is a generalization of 
Shestakov-Umirbaev~\cite[Lemma 5]{SU1}.

\begin{theorem}[{\cite[Theorem 5.2]{SU}}]\label{thm:determinant}
For each $\eta _1,\ldots ,\eta _{l}\in \Omega _{\kx /k}$ 
for $l\geq 2$, 
there exist $1\leq i_1<i_2\leq l$ such that 
$$
\degw \eta _{i_1}+\degw \tilde{\eta }_{i_1}
=\degw \eta _{i_2}+\degw \tilde{\eta }_{i_2}
\geq \degw \eta _{i}+\degw \tilde{\eta }_{i}
$$
for $i=1,\ldots ,l$, 
where $\tilde{\eta }_i
=\eta _1\wedge \cdots \wedge \eta _{i-1}\wedge 
\eta _{i+1}\wedge \cdots \wedge \eta _{l}$ for each $i$. 
\end{theorem}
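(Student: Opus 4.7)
The plan is to leverage the identity $\eta_i \wedge \tilde\eta_i = (-1)^{i-1}\Omega$, where $\Omega := \eta_1 \wedge \cdots \wedge \eta_l$, to reduce the theorem to a dichotomy on $D := \degw \Omega$. Since the identity holds for every $i$ and since the $\w$-degree is subadditive under wedge products (cf.\ (\ref{eq:ineq-wedge}) for the exact case, the general case being immediate from the definition of $\degw$ on $\bigwedge^l \Omega_{\kx/k}$), one obtains the uniform bound $D = \degw(\eta_i \wedge \tilde\eta_i) \leq c_i$, where $c_i := \degw\eta_i + \degw\tilde\eta_i$. Writing $M := \max_i c_i$ and $I := \{i : c_i = M\}$, it suffices to prove $|I| \geq 2$.

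If $D = M$, then $M = D \leq c_i \leq M$ for each $i$, so $I = \{1, \ldots, l\}$ and the conclusion follows from $l \geq 2$.

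Suppose instead $D < M$. For each $i_0 \in I$, I examine the $\w$-homogeneous component of degree $M$ of $\eta_{i_0} \wedge \tilde\eta_{i_0}$. Since this product equals $\pm\Omega$ and $\degw\Omega = D < M$, the degree-$M$ component must vanish. On the other hand, decomposing $\eta_{i_0}$ and $\tilde\eta_{i_0}$ into $\w$-homogeneous summands, any contribution to the degree-$M$ piece comes from a pair of components with degrees $\alpha \leq \degw\eta_{i_0}$ and $\beta \leq \degw\tilde\eta_{i_0}$ satisfying $\alpha + \beta = M = \degw\eta_{i_0} + \degw\tilde\eta_{i_0}$. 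The total order on $\Gamma$ forces $\alpha = \degw\eta_{i_0}$ and $\beta = \degw\tilde\eta_{i_0}$, so the degree-$M$ piece is precisely $\eta_{i_0}^\w \wedge \tilde\eta_{i_0}^\w$. Hence $\eta_{i_0}^\w \wedge \tilde\eta_{i_0}^\w = 0$ for every $i_0 \in I$.

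To finish in the case $D < M$, assume for contradiction that $I = \{i_0\}$; I would proceed by induction on $l$. The base case $l = 2$ is automatic, since then $\tilde\eta_1 = \eta_2$ and $\tilde\eta_2 = \eta_1$ give $c_1 = c_2$. For the inductive step, the vanishing $\eta_{i_0}^\w \wedge \tilde\eta_{i_0}^\w = 0$ in the exterior algebra over the fraction field $k(\x)$ (where 1-forms form a genuine vector space) allows me to factor $\tilde\eta_{i_0}^\w = \eta_{i_0}^\w \wedge \zeta$ for some $\w$-homogeneous $(l-2)$-form $\zeta$. Using this factorization, I would subtract a suitable $\kx$-multiple of $\eta_{i_0}$ from one of the other $\eta_j$ to strictly lower $\degw\eta_j$ while preserving the unchanged $\tilde\eta_{i_0}$, then invoke the inductive hypothesis on the modified $l$-tuple to extract a second maximizer. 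The \emph{main obstacle} is precisely this reduction step: arranging the modification so that a suitable complexity invariant (say $\sum_i \degw\eta_i$) strictly decreases, that $c_{i_0}$ is preserved as a maximum, and that the second maximizer produced by the inductive hypothesis corresponds to a bona fide index in the original tuple. Once this bookkeeping is handled, the induction closes and the theorem follows.
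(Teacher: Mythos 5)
Note first that the paper gives no proof of this statement at all: it is quoted verbatim from \cite[Theorem 5.2]{SU}, so your argument has to stand on its own. What you actually prove is the easy half. The identity $\eta_i\wedge\tilde\eta_i=(-1)^{i-1}\Omega$ with $\Omega:=\eta_1\wedge\cdots\wedge\eta_l$, the subadditivity bound $\degw\Omega\leq c_i:=\degw\eta_i+\degw\tilde\eta_i$, the conclusion when $\degw\Omega=M:=\max_ic_i$, and the vanishing $\eta_{i_0}^{\w}\wedge\tilde\eta_{i_0}^{\w}=0$ for every maximizer $i_0$ when $\degw\Omega<M$ are all correct (and worth having). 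But the whole content of the theorem is the remaining case, and there you offer only a plan whose decisive step you yourself call ``the main obstacle''; an argument that ends with ``once this bookkeeping is handled, the induction closes'' is an acknowledged gap, not a proof.

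Moreover, the plan as sketched would not go through. From $\eta_{i_0}^{\w}\wedge\tilde\eta_{i_0}^{\w}=0$ (equivalently the factorization $\tilde\eta_{i_0}^{\w}=\eta_{i_0}^{\w}\wedge\zeta$ over $k(\x)$) you cannot conclude that some individual $\degw\eta_j$ can be strictly lowered by subtracting a $\kx$-multiple of $\eta_{i_0}$: that would require $\eta_j^{\w}$ itself to be a multiple of $\eta_{i_0}^{\w}$, whereas the vanishing only constrains the leading form of the wedge of all the other forms. For instance, with $\w=(1,1,1)$, $\eta_1=dx_1$, $\eta_2=dx_2$, $\eta_3=x_1^3dx_2+x_2^3dx_1+dx_3$, one has $\eta_1^{\w}\wedge\tilde\eta_1^{\w}=0$, yet neither $\eta_2^{\w}$ nor $\eta_3^{\w}$ is proportional to $dx_1$, so no subtraction of a multiple of $\eta_1$ lowers $\degw\eta_2$ or $\degw\eta_3$; the information you propose to exploit is simply insufficient to produce the move. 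In addition, the move $\eta_j\mapsto\eta_j-f\eta_{i_0}$ changes exactly $\tilde\eta_{i_0}$ (into $\tilde\eta_{i_0}\mp f\tilde\eta_j$) and leaves all the other $\tilde\eta_m$ fixed, which is the opposite of your claim that $\tilde\eta_{i_0}$ is preserved; and your induction is declared to be on $l$ although the modified tuple still has length $l$, so what you would really need is a descent on a degree invariant (legitimate in principle, since these degrees lie in a well-ordered set, cf.\ the argument of Lemma~\ref{lem:|omega|}(ii)), for which no valid strictly decreasing step is exhibited. So the core of the theorem --- excluding a unique maximizer when $\degw\Omega<M$ --- remains unproved, and the reduction mechanism you describe is not the right one.
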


Using Theorem~\ref{thm:determinant},  
we prove a lemma needed later. 
Assume that $k_1,k_2,k_3\in \kx $ 
are algebraically independent over $k$, 
and $k_1':=k_1+ak_3^2+ck_3+\psi $ 
and $k_2':=k_2+\phi $ satisfy the following conditions 
for some $a,c\in k$, $\psi \in k[k_2]$ and $\phi \in k[k_3]$: 

(a) $\degw k_2'<\degw k_1'$; 

(b) $\degw k_1'-\degw k_2'<\degw k_3$; 

(c) $\degw \psi <\degw k_1'-\degw k_2'+\degw k_2$; 

(d) $\degw k_3+\degw dk_1'\wedge dk_2'
<\degw k_1'+\degw dk_2'\wedge dk_3$.

\begin{lemma}\label{lem:determinant}
Under the assumption above, 
we have 
\begin{equation}\label{eq:detfirst}
\degw dk_1\wedge dk_3
=\degw k_1'-\degw k_2'+\degw dk_2\wedge dk_3. 
\end{equation}
If furthermore $\phi =bk_3+d$ for some $b,d\in k$, 
then the following assertions hold$:$ 

{\rm (i)} If $a\neq 0$ 
and $\degw dk_1'\wedge dk_2'<\degw k_3$, 
then 
$$
\degw dk_1\wedge dk_2=\degw k_3+\degw dk_2\wedge dk_3. 
$$ 

{\rm (ii)} 
Assume that $\degw dk_1'\wedge dk_2'<\degw dk_2\wedge dk_3$. 
Then, 
$$
\degw dk_1\wedge dk_2=\left\{
\begin{array}{ll}
\degw k_3+\degw dk_2\wedge dk_3 & \text{ if }a\neq 0 \\
\degw dk_1\wedge dk_3 & \text{ if }a=0\text{ and }b\neq 0 \\
\degw dk_2\wedge dk_3 & \text{ if }a=b=0
\text{ and }c\neq 0 \\
\degw dk_1'\wedge dk_2' & \text{ if }a=b=c=0 .
\end{array}
\right. 
$$

{\rm (iii)} 
Assume that 
$\psi $ belongs to $k$. 
Set $k_1''=k_1+a'k_3^2+c'k_3+\psi '$ 
and $k_2''=k_2+b'k_3+d'$ 
for $a',b',c',d',\psi '\in k$. 
If $\degw dk_1'\wedge dk_2'$ and 
$\degw dk_1''\wedge dk_2''$ are 
less than $\degw dk_2\wedge dk_3$, 
then $(a',b',c')=(a,b,c)$. 
\end{lemma}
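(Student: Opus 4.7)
All three parts follow from explicit wedge-product computations combined with degree bookkeeping via Theorem~\ref{thm:determinant} and conditions (a)--(d). Writing $k_1 = k_1' - ak_3^2 - ck_3 - \psi(k_2)$ and $k_2 = k_2' - \phi(k_3)$, I obtain
$$dk_1 = dk_1' - (2ak_3+c)\,dk_3 - \psi'(k_2)\,dk_2, \qquad dk_2 = dk_2' - \phi'(k_3)\,dk_3,$$
so in particular $dk_2' \wedge dk_3 = dk_2 \wedge dk_3$ and $dk_1 \wedge dk_3 = dk_1' \wedge dk_3 - \psi'(k_2)\,dk_2 \wedge dk_3$. For the equation~(\ref{eq:detfirst}), apply Theorem~\ref{thm:determinant} to the triple $(dk_1', dk_2', dk_3)$: condition (d) rules out $i = 3$ as one of the two indices attaining the maximum of $\deg \eta_i + \deg \tilde{\eta}_i$, so the maximum is achieved jointly at $i = 1$ and $i = 2$. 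Equating these two values rearranges to $\deg dk_1' \wedge dk_3 = \deg k_1' - \deg k_2' + \deg dk_2 \wedge dk_3$. Since $\deg \psi'(k_2) = \deg \psi - \deg k_2$ (or $\psi'(k_2) = 0$ when $\psi \in k$), condition (c) forces the $\psi'(k_2)\, dk_2 \wedge dk_3$ correction to be strictly subdominant, yielding~(\ref{eq:detfirst}).

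For (i) and (ii), with $\phi = bk_3 + d$, a direct expansion yields the clean identity
$$dk_1 \wedge dk_2 = dk_1' \wedge dk_2' - b\,dk_1' \wedge dk_3 + (2ak_3+c)\,dk_2 \wedge dk_3,$$
in which the two would-be contributions involving $\psi'(k_2)$ cancel exactly. Both assertions then reduce to a degree comparison of the three terms on the right. Using (a), (b) and~(\ref{eq:detfirst}), I have $\deg dk_2 \wedge dk_3 < \deg dk_1' \wedge dk_3 < \deg k_3 + \deg dk_2 \wedge dk_3$, while the third term has $\w$-degree $\deg k_3 + \deg dk_2 \wedge dk_3$, $\deg dk_2 \wedge dk_3$, or $-\infty$ according to whether $a \neq 0$, $a = 0 \neq c$, or $a = c = 0$. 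In (i) the extra hypothesis $\deg dk_1' \wedge dk_2' < \deg k_3$ puts the first term below $\deg k_3 + \deg dk_2 \wedge dk_3$, so the third term dominates; in (ii) a four-way case split on the vanishing of $a, b, c$ singles out a unique dominating term and produces the four formulas.

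For (iii), since $\psi, \psi' \in k$ (so in particular $dk_1'' \wedge dk_3 = dk_1 \wedge dk_3$), I have $dk_1' - dk_1'' = \alpha\, dk_3$ and $dk_2' - dk_2'' = \beta\, dk_3$ with $\alpha = 2(a-a')k_3 + (c-c')$ and $\beta = b - b'$, whence
$$dk_1' \wedge dk_2' - dk_1'' \wedge dk_2'' = \beta\, dk_1'' \wedge dk_3 - \alpha\, dk_2 \wedge dk_3.$$
The hypothesis forces the left-hand side to have $\w$-degree $< \deg dk_2 \wedge dk_3$. Assume for contradiction $(a,b,c) \neq (a',b',c')$ and case-split: if $a \neq a'$ then $-\alpha\, dk_2 \wedge dk_3$ has $\w$-degree $\deg k_3 + \deg dk_2 \wedge dk_3$, which by (b) dominates the other term; if $a = a'$ and $\beta \neq 0$, then $\beta\, dk_1'' \wedge dk_3$ dominates by (a), with $\w$-degree $\deg dk_1 \wedge dk_3 > \deg dk_2 \wedge dk_3$; and if $a = a'$, $\beta = 0$, but $c \neq c'$, then the right-hand side is a nonzero constant multiple of $dk_2 \wedge dk_3$, of $\w$-degree exactly $\deg dk_2 \wedge dk_3$. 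Each case contradicts the bound on the left-hand side, so $(a,b,c) = (a',b',c')$.

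The main obstacle is the wedge identity in the middle paragraph: the non-obvious cancellation of the $\psi'(k_2)$-terms in the expansion of $dk_1 \wedge dk_2$ is what makes the resulting formula depend only on the constants $a, b, c$ and cleanly segregate the four regimes in (ii). Once that identity is in place, everything else is routine degree bookkeeping driven by (a)--(d), (\ref{eq:detfirst}), and the fact that $k_3 \notin k$ (so $\alpha = 0$ forces $a = a'$ and $c = c'$).
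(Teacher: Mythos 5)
Your proof is correct and follows essentially the same route as the paper: Theorem~\ref{thm:determinant} applied to $(dk_1',dk_2',dk_3)$ with (d) ruling out the third index, condition (c) to absorb the $\psi$-correction, the expansion of $dk_1\wedge dk_2$ with degree comparisons driven by (a), (b) and (\ref{eq:detfirst}), and a subtraction-plus-case-split for (iii). The only (cosmetic) difference is that your wedge identity groups the $\psi^{(1)}(k_2)$-terms into $dk_1'\wedge dk_3$, whereas the paper keeps them as $dk_1\wedge dk_3+\psi_1\,dk_2\wedge dk_3$; these are equal, so the argument is the same.
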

\begin{proof}\rm
Put $\eta _1=dk_1'$, $\eta _2=dk_2'$ and $\eta _3=dk_3$. 
Then, 
$\degw \eta _3+\degw \tilde{\eta }_3
<\degw \eta _1+\degw \tilde{\eta }_1$ by (d), 
since $\degw dk_i'=\degw k_i'$ for $i=1,2$ 
and $\degw dk_3=\degw k_3$ by (\ref{eq:deg df = deg f}). 
Hence, we must have 
$\degw \eta _1+\degw \tilde{\eta }_1
=\degw \eta _2+\degw \tilde{\eta }_2$ 
by Theorem~\ref{thm:determinant}. 
Since $\phi $ is an element of $k[k_3]$, 
we get $d\phi \wedge dk_3=0$. 
Hence, 
$dk_2'\wedge dk_3=d(k_2+\phi )\wedge dk_3=dk_2\wedge dk_3$. 
Thus, we obtain
\begin{equation}\label{eq:pfdet1}
\begin{aligned}
\degw dk_1'\wedge dk_3
=\deg \tilde{\eta }_2
&=\deg \eta _1-\deg \eta _2+\deg \tilde{\eta }_1\\
&=\degw k_1'-\degw k_2'+\degw dk_2'\wedge dk_3 \\
&=\degw k_1'-\degw k_2'+\degw dk_2\wedge dk_3. 
\end{aligned}
\end{equation}
We show that 
$\degw dk_1'\wedge dk_3=\degw dk_1\wedge dk_3$, 
which proves (\ref{eq:detfirst}). 
Set $\psi _1=\Psi ^{(1)}(k_2)$, 
where $\Psi \in k[y]$ such that $\Psi (k_2)=\psi $. 
Then, $\degw \psi _1\leq \degw \psi -\degw k_2$, 
and so $\degw \psi _1<\degw k_1'-\degw k_2'$ by (c). 
Hence, 
\begin{equation}\label{eq:pfdet3}
\begin{aligned}
\degw \psi _1\mathit{dk}_2\wedge \mathit{dk}_3
&=\deg \psi _1+\deg dk_2\wedge dk_3 \\
&<\degw k_1'-\degw k_2'+\degw \mathit{dk}_2\wedge \mathit{dk}_3
=\deg dk_1'\wedge dk_3
\end{aligned}
\end{equation}
by (\ref{eq:pfdet1}). 
Since $d\psi =\psi _1dk_2$, 
it follows that
\begin{equation*}
dk_1'\wedge dk_3
=dk_1\wedge dk_3
+2ak_3dk_3\wedge dk_3+cdk_3\wedge dk_3
+d\psi \wedge dk_3
=dk_1\wedge dk_3+\psi _1dk_2\wedge dk_3. 
\end{equation*}
This equality and (\ref{eq:pfdet3}) imply 
$\degw dk_1\wedge dk_3=\degw dk_1'\wedge dk_3$. 
This proves (\ref{eq:detfirst}).

Next, assume that $\phi =bk_3+d$ for some $b,d\in k$. 
Then, we have 
\begin{align}
dk_1\wedge dk_2=dk_1'\wedge dk_2'+2ak_3dk_2\wedge dk_3
-b(dk_1\wedge dk_3+\psi _1dk_2\wedge dk_3)
+c\mathit{dk}_2\wedge dk_3. \label{eq:pfdet4}
\end{align}
By (b), (a) and (\ref{eq:pfdet3}), it follows that 
\begin{multline*}
\deg k_3dk_2\wedge dk_3=\deg k_3+\deg dk_2\wedge dk_3
>\degw k_1'-\degw k_2'+\degw dk_2\wedge dk_3\\ 
>\max\{ \degw dk_2\wedge dk_3,\degw \psi _1dk_2\wedge dk_3\} . 
\end{multline*}
Since the right-hand side 
of the first inequality is equal to 
$\deg dk_1\wedge dk_3$ by (\ref{eq:detfirst}), 
we get 
\begin{align}\label{eq:pfdet6}
\degw k_3dk_2\wedge dk_3
>\deg dk_1\wedge dk_3 
>\max\{ \degw dk_2\wedge dk_3,\degw \psi _1dk_2\wedge dk_3\} . 
\end{align}
In view of (\ref{eq:pfdet6}), 
the assertions (i) and (ii) easily follow from (\ref{eq:pfdet4}).

Finally, we verify (iii). 
A direct forward computation shows that
$$
dk_1''\wedge dk_2''-dk_1'\wedge dk_2'=
2(a-a')k_3dk_2\wedge dk_3-(b-b')dk_1\wedge dk_3
+(c-c')dk_2\wedge dk_3. 
$$
By assumption, 
the $\w $-degree of the left-hand side of 
this equality is less than that of $dk_2\wedge dk_3$, 
while those of 
$k_3dk_2\wedge dk_3$ and $dk_1\wedge dk_3$ are greater than 
that of $dk_2\wedge dk_3$ by (\ref{eq:pfdet6}). 
Therefore, 
it follows that $a=a'$, $b=b'$ and $c=c'$. 
\end{proof}

\section{Shestakov-Umirbaev reductions}\label{sect:SUred}
\setcounter{equation}{0}
In this section, 
we study the properties of Shestakov-Umirbaev reductions. 
In what follows, unless otherwise stated, 
$F=(f_1,f_2,f_3)$ and $G=(g_1,g_2,g_3)$ 
denote elements of $\TT $, 
and $S_i:=\{ f_1,f_2,f_3\} \sm \{ f_i\} $ for each $i$. 
We say that the pair $(F,G)$ satisfies 
the {\it quasi Shestakov-Umirbaev condition} for the weight $\w $ 
if (SU4), (SU5), (SU6) 
and the following three conditions hold: 

\medskip 

(SU$1'$) $g_1-f_1$, 
$g_2-f_2$ and $g_3-f_3$ belong to $k[f_2,f_3]$, 
$k[f_3]$ and $k[g_1,g_2]$, respectively; 

(SU$2'$) $\deg f_i\leq \deg g_i$ for $i=1,2$; 

(SU$3'$) $\deg g_2<\deg g_1$, and 
$g_1^{\w }$ does not belong to $k[g_2^{\w }]$. 

\medskip 

It is easy to see that 
(SU1), (SU2) and (SU3) 
imply (SU$1'$), (SU$2'$) and (SU$3'$), respectively. 
Hence, 
if $(F,G)$ satisfies the Shestakov-Umirbaev condition 
for the weight $\w $, then 
$(F,G)$ satisfies the quasi Shestakov-Umirbaev condition 
for the weight $\w $. 
We say that $F\in \TT $ 
{\it admits a quasi Shestakov-Umirbaev reduction} 
for the weight $\w $ 
if $(F_{\sigma },G_{\sigma })$ satisfies 
the quasi Shestakov-Umirbaev condition 
for the weight $\w $ for some $\sigma \in \sym _3$ and $G\in \TT $, 
and call this $G$ a {\it quasi Shestakov-Umirbaev reduction} of 
$F$ for the weight $\w $. 
The weight $\w $ is fixed throughout, 
and so is not explicitly mentioned in what follows.

We show that $F$ and $G$ 
have the properties (P1)--(P12) as follows 
if $(F,G)$ satisfies the quasi Shestakov-Umirbaev condition: 

\medskip 

{\rm (P1)} $(g_1^{\w })^2\approx (g_2^{\w })^s$ 
for some odd number $s\geq 3$, 
and so $\delta :=(1/2)\degw g_2$ belongs to $\Gamma $. 

{\rm (P2)} $\degw f_3\geq (s-2)\delta +\degw dg_1\wedge dg_2$. 

{\rm (P3)} $\degw f_2=\degw g_2$. 

{\rm (P4)} If $\degw \phi \leq \degw g_1$ for $\phi \in k[S_1]$, 
then there exist $a',c'\in k$ and $\psi '\in k[f_2]$ 
with $\degw \psi '\leq (s-1)\delta $ such that 
$\phi =a'f_3^2+c'f_3+\psi '$. 

{\rm (P5)} If $\degw f_1<\degw g_1$, 
then $s=3$, $g_1^{\w }\approx (f_3^{\w })^2$, 
$\degw f_3=(3/2)\delta $ and 
$$
\degw f_1\geq \frac{5}{2}\delta +\degw dg_1\wedge dg_2. 
$$

{\rm (P6)} $\degw G<\degw F$. 

{\rm (P7)} $\degw f_2<\degw f_1$, $\degw f_3\leq \degw f_1$, 
and $\delta <\degw f_i\leq s\delta $ for $i=1,2,3$.

{\rm (P8)} $f_i^{\w }$ does not belong to $k[f_j^{\w }]$ 
if $i\neq j$ and $(i,j)\neq (1,3)$. 
If $f_1^{\w }$ belongs to $k[f_3^{\w }]$, 
then $s=3$, 
$f_1^{\w }\approx (f_3^{\w })^2$ 
and $\degw f_3=(3/2)\delta $.

{\rm (P9)} If $\degw \phi \leq \degw f_2$ for $\phi \in k[S_2]$, 
then there exist $b',d'\in k$ such that $\phi =b'f_3+d'$. 

{\rm (P10)} Assume that $k[g_1,g_2]\neq k[S_3]$. 
If $\degw \phi \leq \degw f_1$ for $\phi \in k[S_3]$, 
then there exist 
$c''\in k$ and $\psi ''\in k[f_2]$ 
with $\degw \psi ''\leq \min \{ (s-1)\delta ,\degw \phi \} $ 
such that $\phi =c''f_1+\psi ''$. 
If $\degw \phi <\degw f_1$, then $c''=0$.

{\rm (P11)} 
There exist $a,b,c,d\in k$ and 
$\psi \in k[f_2]$ with $\degw \psi \leq (s-1)\delta $ 
such that $g_1=f_1+af_3^2+cf_3+\psi $ 
and $g_2=f_2+bf_3+d$. 
If $a\neq 0$ or $b\neq 0$, then $\degw f_3\leq \degw f_2$. 
If $\degw f_3\leq \degw f_2$, then $s=3$. 
Furthermore, if $\psi $ belongs to $k$, then 
$a$, $b$ and $c$ are uniquely determined by 
$F$ in the following sense$:$ 
If $(F,G')$ satisfies the quasi Shestakov-Umirbaev condition 
for $G'=(g_1',g_2',g_3')\in \TT $, 
where $g_1'=f_1+a'f_3^2+c'f_3+\psi '$ 
and $g_2'=f_2+b'f_3+d'$ with $a',b',c',d',\psi '\in k$, 
then $a'=a$, $b'=b$ and $c'=c$. 

{\rm (P12)} The following equalities and inequality hold$:$ 
\begin{align*}
\degw df_1\wedge df_2&=\left\{
\begin{array}{ll}
\degw f_3+\degw df_2\wedge df_3 & \text{ if }a\neq 0 \\
\degw df_1\wedge df_3 & \text{ if }a=0\text{ and }b\neq 0 \\
\degw df_2\wedge df_3 & \text{ if }a=b=0\text{ and }c\neq 0 \\
\degw dg_1\wedge dg_2 & \text{ if }a=b=c=0  
\end{array}
\right. \\
\degw df_1\wedge df_3&=(s-2)\delta +\degw df_2\wedge df_3\\ 
\degw df_2\wedge df_3&\geq s\delta +\degw dg_1\wedge dg_2. 
\end{align*}

\medskip 

To show these properties, 
we set $\phi _i=g_i-f_i$ for $i=1,2,3$. 
Since $\degw g_3<\degw f_3$ by (SU5), 
we have $\phi _3^{\w }=-f_3^{\w }$ and 
$\degw \phi _3=\degw f_3$. 
Hence, $\degw \phi _3\leq \degw g_1$ and 
$\phi _3^{\w }$ does not belong to $k[g_1^{\w },g_2^{\w }]$ 
by (SU4). 
Set $U=\{ g_1,g_2\} $. 
Since $\phi _3$ is contained in $k[U]$ by (SU$1'$), 
it follows that $\degw \phi _3<\degw ^{U}\phi _3$. 
In view of (SU$3'$), we know that 
the assumptions of Lemma~\ref{lem:degS2}(ii) 
hold for 
$f=g_2$, $g=g_1$ and $\phi =\phi _3$. 
Therefore, 
there exists an odd number $s\geq 3$ 
such that 
$(g_1^{\w })^2\approx (g_2^{\w })^s$ and 
\begin{gather}
\degw f_3=
\degw \phi _3\geq (s-2)\delta +\degw dg_1\wedge dg_2, 
\label{eq:pfSUreduction1}\\
\degw dg_2\wedge d\phi _3\geq s\delta 
+\degw dg_1\wedge dg_2,\label{eq:pfSUreduction1.5}
\end{gather}
where $\delta =(1/2)\degw g_2$. 
This proves (P1) and (P2).

We show that $g_2$ is expressed as in (P11). 
By (SU$1'$), 
$\phi _2=g_2-f_2$ belongs to $k[f_3]$. 
Hence, $\phi _2=\sum _{i=0}^pb_if_3^i$ 
for some $b_0,\ldots ,b_p\in k$ with $b_p\neq 0$, 
where $p\in \Zn $. 
By (SU$2'$), 
$\degw \phi _2\leq \max \{ \degw g_2,\degw f_2\} 
=\degw g_2=2\delta $. 
By (\ref{eq:pfSUreduction1}), 
we get $\degw f_3>\delta $, 
since $s\geq 3$. 
Thus, we must have $p\leq 1$ and $\phi _2=b_1f_3+b_0$, 
for otherwise 
$\deg \phi _2=p\deg f_3>p\delta \geq 2\delta $, 
a contradiction. 
Therefore, $g_2$ is expressed as stated.

We show (P3) 
and the first assertion of (P8) for $(i,j)=(2,3),(3,2)$ 
by contradiction. 
Supposing that $\degw f_2\neq \degw g_2$, 
we have $\degw f_2<\degw g_2$ by (SU$2'$). 
Since $g_2=f_2+bf_3+d$ as shown above, 
it follows that 
$g_2^{\w }=bf_3^{\w }$ and $b\neq 0$. 
Hence, 
$f_3^{\w }$ belongs to $k[g_1^{\w },g_2^{\w }]$, 
a contradiction to (SU4). 
Therefore, 
$\degw f_2=\degw g_2$, proving (P3). 
Next, 
we show that $f_2^{\w }\not\approx f_3^{\w }$. 
Supposing that $f_2^{\w }\approx f_3^{\w }$, 
we have $\degw f_2=\degw f_3$. 
Hence, 
$\degw g_2=\degw f_3$ by (P3). 
Thus, 
$g_2^{\w }=f_2^{\w }+bf_3^{\w }$. 
Since $f_2^{\w }\approx f_3^{\w }$, 
we get $g_2^{\w }\approx f_3^{\w }$. 
This contradicts (SU4). 
Therefore, 
$f_2^{\w }\not\approx f_3^{\w }$. 
Now, suppose that $f_3^{\w }$ belongs to $k[f_2^{\w }]$. 
Then, 
$f_3^{\w }\approx (f_2^{\w })^l$ for some $l\geq 2$, 
since $f_2^{\w }\not\approx f_3^{\w }$. 
Hence, $\degw f_2<\degw f_3$. 
From $\degw f_2=\degw g_2=\degw (f_2+bf_3+d)$, 
we get $b=0$, and $f_2^{\w }=g_2^{\w }$. 
%Hence, $f_2^{\w }=g_2^{\w }$. 
Since $f_3^{\w }\approx (f_2^{\w })^l$, 
it follows that 
$f_3^{\w }\approx (g_2^{\w })^l$, 
a contradiction to (SU4). 
Therefore, 
$f_3^{\w }$ does not belong to $k[f_2^{\w }]$. 
Suppose that $f_2^{\w }$ belongs to $k[f_3^{\w }]$. 
Then, 
$f_2^{\w }\approx (f_3^{\w })^l$ for some $l\in \N $, 
where $l\geq 2$ as above. 
This is impossible, 
because $\deg f_2=2\delta $ by (P3) 
and $\deg f_3>\delta $ by (\ref{eq:pfSUreduction1}). 
Therefore, 
$f_2^{\w }$ does not belong to $k[f_3^{\w }]$.

Since $g_2-f_2$ is contained in $k[f_3]$ by (SU$1'$), 
it follows that  
$df_2\wedge df_3-dg_2\wedge df_3=d(f_2-g_2)\wedge df_3=0$. 
Moreover, 
$df_3=dg_3-d\phi _3$. 
Hence, 
\begin{equation}\label{eq:pfSUreduction2}
df_2\wedge df_3=dg_2\wedge df_3
=dg_2\wedge dg_3-dg_2\wedge d\phi _3. 
\end{equation}
By (\ref{eq:ineq-wedge}), (SU6), (P1) 
and (\ref{eq:pfSUreduction1.5}), 
we get 
\begin{align*}
\degw dg_2\wedge dg_3\leq
\degw g_2+\degw g_3
&<\degw g_1+\degw dg_1\wedge dg_2 \\
&=s\delta +\degw dg_1\wedge dg_2\leq \degw dg_2\wedge d\phi _3. 
\end{align*}
Then, it follows from (\ref{eq:pfSUreduction2}) that 
$\degw df_2\wedge df_3=\degw dg_2\wedge d\phi _3$. 
Therefore, we obtain 
\begin{equation}\label{eq:pfSUreduction4}
\degw df_2\wedge df_3
\geq s\delta +\degw dg_1\wedge dg_2 
\end{equation}
by (\ref{eq:pfSUreduction1.5}). 
This proves the last inequality of (P12).

The following lemma is useful in proving (P4), (P9), (P10) and (P11).

\begin{lemma}\label{lem:expression}
Assume that 
$\degw f_2=2\delta $ and 
$(s-2)\delta <\degw f_3\leq s\delta $ 
for some $\delta \in \Gamma $ and an odd number $s\geq 3$. 
Then, the following assertions hold$:$ 

{\rm (i)} 
If $\degw ^{S_1}\phi \leq s\delta $ 
for $\phi \in k[S_1]$, 
then there exist $a,c\in k$ and 
$\psi \in k[f_2]$ with $\degw \psi \leq (s-1)\delta $ 
such that $\phi =af_3^2+cf_3+\psi $. 
If $a\neq 0$, then $\deg f_3<\deg f_2$. 

{\rm (ii)} 
Assume that $\degw f_1>\deg f_2$. 
If $\degw ^{S_2}\phi \leq \deg f_2$ 
for $\phi \in k[S_2]$, 
then there exist $b,d\in k$ such that $\phi =bf_3+d$. 

{\rm (iii)} 
Assume that $\degw f_1\leq s\delta $. 
If $\degw ^{S_3}\phi \leq \degw f_1$ for $\phi \in k[S_3]$, 
then there exist 
$c'\in k$ and $\psi '\in k[f_2]$ with 
$\degw \psi '\leq \min \{ (s-1)\delta ,\degw ^{S_3}\phi \} $
such that $\phi =c'f_1+\psi '$. 
If $\degw ^{S_3}\phi <\degw f_1$, 
then $c'=0$. 

{\rm (iv)} If $\degw f_3\leq \degw f_2$, then $s=3$. 
\end{lemma}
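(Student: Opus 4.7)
The plan is to prove each of (i)--(iv) by a monomial-by-monomial analysis: writing $\phi$ as a $k$-linear combination of products of the two generators of $k[S_i]$, the hypothesis on $\degw^{S_i}\phi$ becomes a uniform weight bound on each such product, which we intersect with the prescribed ranges for $\degw f_2=2\delta$ and $(s-2)\delta<\degw f_3\leq s\delta$ to read off the permitted exponents. Before beginning, one checks $\delta>0$: if $\delta<0$ then $s\delta<(s-2)\delta$, incompatible with $(s-2)\delta<\degw f_3\leq s\delta$, and $\delta=0$ forces $\degw f_3>0\geq\degw f_3$. Part (iv) is then immediate: combining $\degw f_3\leq 2\delta$ with $\degw f_3>(s-2)\delta$ yields $s<4$, forcing $s=3$ since $s\geq 3$ is odd.

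For (i), write $\phi=\sum_{i,j}c_{i,j}f_2^if_3^j$ so that each nonzero term satisfies $2i\delta+j\degw f_3\leq s\delta$. Using $\degw f_3>(s-2)\delta$ and the elementary inequality $3(s-2)\geq s$ for $s\geq 3$, one gets $j\leq 2$. A further termwise check shows: (a) the case $j=2$ can occur only when $s=3$ (because $2(s-2)\geq s$ for $s\geq 4$), and then it forces $i=0$ together with $\degw f_3\leq(3/2)\delta<2\delta=\degw f_2$, producing the coefficient $a$ and proving the final clause of (i); (b) the case $j=1$ forces $i=0$, producing the coefficient $c$; and (c) the $j=0$ monomials collect into $\psi\in k[f_2]$ of $\w$-degree $\leq(s-1)\delta$, using that $s$ is odd so $2i\delta\leq s\delta$ means $i\leq (s-1)/2$.

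Parts (ii) and (iii) follow by the same method applied to the expansions $\phi=\sum c_{i,j}f_1^if_3^j$ and $\phi=\sum c_{i,j}f_1^if_2^j$, respectively. In (ii), the hypothesis $\degw f_1>2\delta$ immediately forbids $i\geq 1$, while $2\degw f_3>2(s-2)\delta\geq 2\delta$ forbids $j\geq 2$, leaving $\phi=bf_3+d$. In (iii), the positivity $\degw f_1>0$ (forced by algebraic independence of $F$ with positive weights) forbids $i\geq 2$; the case $i=1$ forces $j=0$ (giving the $c'f_1$ term); and the $i=0$ monomials give $\psi'\in k[f_2]$ with $2j\delta\leq\degw^{S_3}\phi\leq s\delta$, hence $\degw\psi'\leq\min\{(s-1)\delta,\degw^{S_3}\phi\}$ once $j\leq(s-1)/2$. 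If $\degw^{S_3}\phi<\degw f_1$, the $i=1$ monomial cannot appear because its $\w$-degree equals $\degw f_1$, so $c'=0$.

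No part involves a genuine obstacle; the whole argument is a careful bookkeeping of which exponents $(i,j)$ can survive the assumed weight bound, driven by the tight sandwich $(s-2)\delta<\degw f_3\leq s\delta$ which pins the $f_3$-exponent to $\{0,1,2\}$, with the critical value $2$ attained only when $s=3$. The only subtlety is keeping track of the strict versus non-strict inequalities when $s=3$, where the bounds $3(s-2)=s$ and $2(s-2)<s$ become borderline and the strict inequality $\degw f_3>(s-2)\delta$ must be used to rule out $j\geq 3$.
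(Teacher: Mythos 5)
Your proof is correct and follows essentially the same route as the paper: expand $\phi$ in monomials of the two generators of $k[S_i]$, bound each exponent pair using $\degw f_2=2\delta$, the sandwich $(s-2)\delta<\degw f_3\leq s\delta$, and the parity of $s$, exactly as in the paper's termwise analysis (including the observation that a nonzero $f_3^2$-term forces $s=3$ and $\degw f_3<\degw f_2$). The only additions are the explicit check $\delta>0$ and the remark that $\degw f_1>0$ follows from algebraic independence, both of which the paper leaves implicit.
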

\begin{proof}\rm
To show (i), 
write $\phi =\sum _{i,j}c_{i,j}f_2^if_3^j$, 
where $c_{i,j}\in k$ for each $i,j\in \Zn $. 
Since $\degw ^{S_1}\phi \leq s\delta $ by assumption, 
$\degw f_2^if_3^j\leq s\delta $ 
if $c_{i,j}\neq 0$ for $i,j\in \Zn $. 
We verify that, if $\deg f_2^if_3^j\leq s\delta $, 
then $i\leq (s-1)/2$ and $j=0$, 
or $i=0$ and $j=1,2$. 
This shows that $\phi $ can be expressed as in (i). 
It follows that 
$\degw f_2^if_3>2i\delta +(s-2)\delta \geq s\delta $ 
if $i\geq 1$. 
If $i>(s-1)/2$, then $2i>s$, 
since $s$ is an odd number. 
Hence,
$\degw f_2^i=2i\delta >s\delta $. 
If $j\geq 3$, then 
$\degw f_3^j>j(s-2)\delta \geq s\delta $, 
since $s\geq 3$. 
Thus, 
if $\deg f_2^if_3^j\leq s\delta $, 
then $(i,j)$ must be as stated above. 
Therefore, 
$\phi $ can be expressed as in (i). 
Assume that $a\neq 0$. 
Then, 
$\deg f_3^2\leq \deg ^{S_1}\phi \leq s\delta $. 
Since $(s-2)\delta <\deg f_3$, 
we get $2(s-2)<s$. 
Thus, $s<4$, and hence $s=3$. 
Therefore, 
$\deg f_3\leq (s/2)\delta =(3/2)\delta <2\delta =\deg f_2$. 
This proves (i).

We can prove (ii) and (iii), similarly. 
Actually, 
if $\deg f_1>\deg f_2$ 
and if $\degw f_1^if_3^j\leq \deg f_2$ 
for $i,j\in \Zn $, 
then $i=0$. 
Moreover, we have $j\leq 1$, 
since $\degw f_3^2>2(s-2)\delta \geq 2\delta =\deg f_2$. 
Therefore, 
$\phi =bf_3+d$ for some $b,d\in k$ in case (ii). 
To show (iii), 
assume that $\degw ^{S_3}\phi \leq \degw f_1$ for $\phi \in k[S_3]$. 
Clearly, 
$i=0$ or $(i,j)=(1,0)$ if 
$\degw f_1^if_2^j\leq \degw f_1$, 
while $i=0$ if 
$\degw f_1^if_2^j<\degw f_1$. 
Hence, 
$\phi =c'f_1+\psi '$ 
for some $c'\in k$ and $\psi '\in k[f_2]$ 
where $c'=0$ if $\degw ^{S_3}\phi <\degw f_1$. 
We note that $\deg \psi '\leq \deg ^{S_3}\phi $. 
Since 
$\deg ^{S_3}\phi \leq \deg f_1\leq s\delta $ by assumption, 
it follows that $\deg \psi '\leq s\delta $. 
This implies that $\deg \psi '\leq (s-1)\delta $, 
because $s$ is an odd number, 
and $\deg \psi '=\deg f_2^l=2l\delta $ 
with $l\in \Zn $ if $\psi '\neq 0$. 
Therefore, we obtain 
$\degw \psi '\leq \min \{ (s-1)\delta ,\degw ^{S_3}\phi \} $.

The assertion (iv) follows from 
$(s-2)\delta <\degw f_3\leq \degw f_2=2\delta $. 
\end{proof}

We show (P4) using Lemma~\ref{lem:expression}(i). 
Since $\degw f_2=\degw g_2=2\delta $ by (P3), 
and since $(s-2)\delta <\degw f_3\leq s\delta $ 
by (\ref{eq:pfSUreduction1}) and (SU4), 
it suffices to check that $\degw ^{S_1}\phi \leq s\delta $. 
Supposing the contrary, 
we have $\deg \phi <\degw ^{S_1}\phi $, 
since $\degw \phi \leq \degw g_1=s\delta $ 
by the assumption of (P3). 
As shown above, 
$f_i^{\w }$ does not belong to $k[f_j^{\w }]$ 
for $(i,j)=(2,3),(3,2)$. 
Hence, 
$\degw \phi >\degw df_2\wedge df_3$ 
by Lemma~\ref{lem:degS2}(i). 
Since 
$\degw df_2\wedge df_3>s\delta $ 
by (\ref{eq:pfSUreduction4}),  
we get $\degw \phi >s\delta $, 
a contradiction. 
Thus, $\degw ^{S_1}\phi \leq s\delta $, 
and thereby proving (P4).

We complete the proof of the former part of (P11). 
Since $\phi _1=g_1-f_1$ belongs to $k[S_1]$ by (SU$1'$), 
and since $\degw \phi _1\leq 
\max \{ \deg g_1,\deg f_1\} =\degw g_1=s\delta $ by (SU$2'$), 
we know by (P4) that $g_1=f_1+\phi _1$ is expressed as in (P11). 
If $a\neq 0$, then $\deg f_3<\deg f_2$ 
by the last assertion of Lemma~\ref{lem:expression}(i). 
Since $\deg f_2=\deg g_2$ and $g_2=f_2+bf_3+d$, 
we get $\deg f_3\leq \deg f_2$ if $b\neq 0$. 
By Lemma~\ref{lem:expression}(iv), 
$\degw f_3\leq \degw f_2$ implies $s=3$. 
We have thus proved the former part of (P11).

We show that 
the conditions listed before Lemma~\ref{lem:determinant} 
and the inequality 
$\degw dk_1'\wedge dk_2'<\degw dk_2\wedge dk_3$ 
hold for 
$k_i=f_i$ for $i=1,2,3$ and $k_i'=g_i$ for $i=1,2$. 
By the former part of (P11), 
$k_1'$ and $k_2'$ are expressed in terms of 
$k_1$, $k_2$ and $k_3$ as required. 
Since $\degw g_2<\degw g_1$ by (SU$3'$), 
we get (a). 
Since $\degw g_1-\degw g_2=(s-2)\delta $, 
(b) follows from (\ref{eq:pfSUreduction1}). 
By (P3), 
(c) is equivalent to $\degw \psi <\degw k_1'$, 
which follows from 
$\degw \psi \leq (s-1)\delta <\degw g_1$. 
The rest of the conditions 
are due to (\ref{eq:pfSUreduction4}), 
since $df_2\wedge df_3=dg_2\wedge df_3$ as mentioned. 
Therefore, we obtain the estimation of 
$\degw df_1\wedge df_2$ described in (P12) 
from Lemma~\ref{lem:determinant}(ii). 
Owing to (\ref{eq:detfirst}), 
we have 
\begin{align}\label{eq:pfSUred5.5}
\degw df_1\wedge df_3&=(s-2)\delta +\degw df_2\wedge df_3, 
\end{align}
the second equality of (P12). 
The uniqueness of $a$, $b$ and $c$ claimed in (P11) 
follows from Lemma~\ref{lem:determinant}(iii). 
This completes the proofs of (P11) and (P12).

Here, we remark that 
\begin{align}\label{eq:pfSUred6}
\degw df_1\wedge df_3\geq 2(s-1)\delta +\degw dg_1\wedge dg_2
\end{align}
follows from 
(\ref{eq:pfSUreduction4}) and (\ref{eq:pfSUred5.5}). 
Since $\degw f_1+\degw f_3\geq \degw df_1\wedge df_3$, 
we obtain that 
\begin{equation}\label{eq:pfSUreduction5}
\degw f_1\geq 
2(s-1)\delta +\degw dg_1\wedge dg_2-\degw f_3. 
\end{equation}

Now, we show (P5). 
By the assumption of (P5), 
we have $\degw f_1<\degw g_1$. 
Hence, 
$g_1^{\w }=(f_1+\phi _1)^{\w }=\phi _1^{\w }$, 
and so $\deg \phi _1=s\delta $. 
Since $g_1^{\w }\not \approx f_3^{\w }$ by (SU4), 
we get $\phi _1^{\w }\not\approx f_3^{\w }$. 
By (P11), 
we have $\phi _1=af_3^2+cf_3+\psi $, 
in which $\degw \psi \leq (s-1)\delta $. 
From this, 
it follows that $a\neq 0$, 
for otherwise $\phi _1^{\w }=cf_3^{\w }$, 
a contradiction. 
Hence, $s=3$ by (P11). 
Moreover, 
$\phi _1^{\w }\approx (f_3^{\w })^2$, 
and thus $g_1^{\w }\approx (f_3^{\w })^2$. 
Therefore, $\degw f_3=(1/2)\deg g_1=(3/2)\delta $. 
The last inequality of (P5) follows 
from (\ref{eq:pfSUreduction5}).

We show (P6) and (P7) with the aid of (P5). 
If $\degw g_1=\degw f_1$, 
then (P6) is clear, 
since $\degw g_2=\degw f_2$ by (P3), 
and $\degw g_3<\degw f_3$ by (SU5). 
Assume that $\degw f_1<\degw g_1$. 
Then, 
$$
\deg f_1+\deg f_3
>\frac{5}{2}\delta +\deg dg_1\wedge dg_2+\frac{3}{2}\delta 
=4\delta +\deg dg_1\wedge dg_2
$$
by (P5). 
On the other hand, 
since $\deg g_2=2\delta $, 
and $\deg g_1=s\delta =3\delta $ by (P5), 
it follows from (SU6) that 
$$
\degw g_1+\degw g_3<\deg g_1+\deg g_1-\deg g_2+\degw dg_1\wedge dg_2
=4\delta +\degw dg_1\wedge dg_2. 
$$
Therefore, 
$\deg G<\deg F$ by (P3). 
This proves (P6). 
If $\degw f_1=\degw g_1$, 
then $\degw f_2<\degw f_1$ by (SU$3'$), 
and $\degw f_3\leq \degw f_1$ by (SU4). 
If $\deg f_1<\deg g_1$, 
then $\degw f_1>(5/2)\delta $ 
and $\degw f_3=(3/2)\delta $ by (P5). 
Hence, 
$\degw f_i<\degw f_1$ for $i=2,3$. 
This proves the first two statements of (P7). 
The last statement of (P7) 
follows from the conditions that 
$(5/2)\delta <\degw f_1\leq \degw g_1=s\delta $, 
$\degw f_2=2\delta $ 
and $(s-2)\delta <\degw f_3\leq \degw g_1$.

Let us complete the proof of (P8). 
First, 
we show that 
$\deg f_i\neq l\deg f_j$ holds for any $l\in \N $ 
for $(i,j)=(1,2),(2,1)$, 
which proves that 
$f_i^{\w }$ does not belong to $k[f_j^{\w }]$. 
In case $\degw f_1=\degw g_1$, 
we have $2\deg f_1=s\deg f_2$ by (P1) and (P3). 
Since $s\geq 3$ is an odd number, 
the assertion is true. 
In case $\degw f_1<\degw g_1$, 
we have $(5/2)\delta <\degw f_1<3\delta $ by (P5). 
Since $\degw f_2=2\delta $, 
the assertion is readily verified. 
Thus, 
$f_i^{\w }$ does not belong to $k[f_j^{\w }]$ 
for $(i,j)=(1,2),(2,1)$. 
Next, 
suppose to the contrary that $f_3^{\w }$ belongs to $k[f_1^{\w }]$. 
Since $\degw f_3\leq \degw f_1$ by (P7), 
it follows that $f_3^{\w }\approx f_1^{\w }$. 
In view of (P5), 
we get $\degw f_1=\degw g_1$. 
Hence, 
$g_1^{\w }=f_1^{\w }+cf_3^{\w }$. 
Consequently, 
we obtain $f_3^{\w }\approx g_1^{\w }$, 
a contradiction to (SU4). 
Therefore, 
$f_3^{\w }$ does not belong to $k[f_1^{\w }]$. 
Since the cases $(i,j)=(2,3),(3,2)$ are done, 
this completes the proof of the former part of (P8). 
For the latter part, 
assume that 
$f_1^{\w }$ belongs to $k[f_3^{\w }]$. 
Then, $f_1^{\w }\approx (f_3^{\w })^l$ 
for some $l\in \N $. 
Since $f_3^{\w }$ does not belong to $k[f_1^{\w }]$, 
it follows that $l\geq 2$. 
Then, 
we must have $s=3$ and $l=2$. 
In fact, if $s\geq 5$ or $l\geq 3$, 
then $s\leq l(s-2)$, and so 
$$
\degw f_1\leq \degw g_1=s\delta \leq l(s-2)\delta 
<l\degw f_3, 
$$
which contradicts $f_1^{\w }\approx (f_3^{\w })^l$. 
Thus, 
$f_1^{\w }\approx (f_3^{\w })^2$. 
If $\degw f_3\neq (3/2)\delta $, 
then $\degw f_1=\degw g_1$ by (P5), 
and hence 
$$
\degw f_3=\frac{1}{2}\degw f_1=\frac{1}{2}\degw g_1
=\frac{1}{2}s\delta =\frac{3}{2}\delta , 
$$
a contradiction. 
Therefore, 
$\degw f_3=(3/2)\delta $. 
This completes the proof of (P8).

We show (P9) using Lemma~\ref{lem:expression}(ii). 
Since $\deg f_2<\deg f_1$ by (P7), 
we verify that, 
if $\deg \phi \leq \deg f_2$ for $\phi \in k[S_2]$, 
then $\degw ^{S_2}\phi \leq \degw f_2$. 
Supposing the contrary, 
we get $\degw \phi <\degw ^{S_2}\phi $. 
By Lemma~\ref{lem:degS1}(i), 
there exist $p,q\in \N $ with $\gcd (p,q)=1$ such that 
$(f_3^{\w })^p\approx (f_1^{\w })^q$ and 
\begin{align}
2\delta =\deg f_2\geq \degw \phi 
&\geq q\degw f_1+\degw df_1\wedge df_3-\degw f_1-\degw f_3 \notag \\
&\geq (q-1)\degw f_1-\degw f_3+2(s-1)\delta +\degw dg_1\wedge dg_2, 
\label{eq:pfSUred7}
\end{align}
where the last inequality is due to (\ref{eq:pfSUred6}). 
Since $f_3^{\w }$ does not belong to $k[f_1^{\w }]$ by (P8), 
we have $p\geq 2$. 
If $\degw f_1<\degw g_1$, 
then $s=3$, $\degw f_1>(5/2)\delta $ 
and $\degw f_3=(3/2)\delta $ by (P5), 
and hence the right-hand side of (\ref{eq:pfSUred7}) 
is greater than
$$
(q-1)\frac{5}{2}\delta -\frac{3}{2}\delta +4\delta 
+\degw dg_1\wedge dg_2
>\frac{5}{2}q\delta >2\delta ,
$$
a contradiction. 
Thus, 
$\degw f_1=\degw g_1=s\delta $. 
Then, the right-hand side of (\ref{eq:pfSUred7}) is 
at least 
$$
(q-1)s\delta -\frac{q}{p}s\delta +2(s-1)\delta 
+\degw dg_1\wedge dg_2
>\frac{qs}{p}(p-1)\delta +(s-2)\delta, 
$$
which is less than $2\delta $ by (\ref{eq:pfSUred7}). 
Hence, 
$s=3$ and $(3q/p)(p-1)<1$. 
Since $p\geq 2$, 
it follows that $3\leq 3q<1+1/(p-1)\leq 2$, 
a contradiction. 
Therefore, 
we conclude that $\degw ^{S_2}\phi \leq \degw f_2$, 
and thereby proving (P9).

To show (P10), 
assume that $k[S_3]\neq k[g_1,g_2]$, 
and take $\phi \in k[S_3]$ such that 
$\degw \phi \leq \degw f_1$. 
By virtue of Lemma~\ref{lem:expression}(iii), 
it suffices to check that $\degw \phi =\degw ^{S_3}\phi $. 
Supposing the contrary, 
we get $\degw \phi <\degw ^{S_3}\phi $. 
By (P8), 
$f_i^{\w }$ does not belong to $k[f_j^{\w }]$ 
for $(i,j)=(1,2),(2,1)$. 
Hence, 
$\degw \phi >\degw df_1\wedge df_2$ by Lemma~\ref{lem:degS2}(i). 
Since $k[S_3]\neq k[g_1,g_2]$, 
we must have $(a,b,c)\neq (0,0,0)$. 
Hence, 
$\degw df_1\wedge df_2\geq \degw df_2\wedge df_3>s\delta $ 
by (P12). 
Thus, 
$\degw \phi >s\delta $. 
This is a contradiction, 
because $\degw \phi \leq \degw f_1$ 
and $\deg f_1\leq \deg g_1=s\delta $. 
Therefore, 
$\degw \phi =\degw ^{S_3}\phi $, 
and thereby (P10) is proved.

We have thus proved the following theorem.

\begin{theorem}\label{thm:SUreduction}
If $(F,G)$ satisfies the quasi 
Shestakov-Umirbaev condition 
for $F,G\in \TT $, 
then {\rm (P1)--(P12)} hold for $F$ and $G$. 
\end{theorem}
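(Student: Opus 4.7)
The plan is to set $\phi_i=g_i-f_i$ for $i=1,2,3$ and extract all twelve properties by repeated application of Lemmas~\ref{lem:degS1}, \ref{lem:degS2} and \ref{lem:determinant}, together with the hypotheses (SU$1'$)--(SU$3'$) and (SU4)--(SU6). First I would apply Lemma~\ref{lem:degS2}(ii) to the pair $(g_2,g_1)$ with $\phi=\phi_3$. The hypotheses hold because (SU5) gives $\phi_3^{\w}=-f_3^{\w}$ of degree $\degw f_3$, (SU4) places this degree below $\degw g_1$ and keeps $\phi_3^{\w}$ outside $k[g_1^{\w},g_2^{\w}]$, and (SU$3'$) supplies $\degw g_2<\degw g_1$ with $g_1^{\w}\notin k[g_2^{\w}]$. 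This yields (P1), the lower bound in (P2), and a companion bound on $\degw dg_2\wedge d\phi_3$ that will feed the last line of (P12).

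Next, since (SU$1'$) puts $\phi_2\in k[f_3]$, I would use (SU$2'$) and the bound $\degw f_3>\delta$ from (P2) to force $\phi_2$ to be linear in $f_3$, giving the shape of $g_2$ in (P11). Degree/leading-term comparisons then yield (P3) and the half of (P8) covering the pairs $(2,3),(3,2)$. The identity $df_2\wedge df_3=dg_2\wedge dg_3-dg_2\wedge d\phi_3$ combined with (SU6) produces the inequality $\degw df_2\wedge df_3\geq s\delta+\degw dg_1\wedge dg_2$, which is the third line of (P12). Now I would prove a local lemma (the analogue of Lemma~\ref{lem:expression} in the excerpt) that, under the standing bounds $\degw f_2=2\delta$ and $(s-2)\delta<\degw f_3\leq s\delta$, classifies elements of $k[S_i]$ of small $\w$-degree; with this in hand (P4) follows by ruling out $\degw\phi_1<\degw^{S_1}\phi_1$ via Lemma~\ref{lem:degS2}(i), producing the full expression $g_1=f_1+af_3^2+cf_3+\psi$ in (P11). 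The first two lines of (P12) and the uniqueness statement in (P11) then come directly from Lemma~\ref{lem:determinant}(ii)(iii) applied to $(f_1,f_2,f_3;g_1,g_2)$, whose four hypotheses I verify using (SU$3'$), (P2), the fact $\degw\psi\leq(s-1)\delta<\degw g_1$ and the $df_2\wedge df_3$ bound.

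The technically hardest portion is the cluster (P5)--(P10), which requires bifurcating on whether $\degw f_1=\degw g_1$ or $\degw f_1<\degw g_1$. In the latter subcase I would argue that $g_1^{\w}=\phi_1^{\w}\not\approx f_3^{\w}$ (by (SU4)) forces the quadratic coefficient $a$ to be nonzero in the expansion from (P11), hence $s=3$ by the already-proved half of (P11), $g_1^{\w}\approx(f_3^{\w})^2$, and $\degw f_3=(3/2)\delta$; combining with the inequality $\degw df_1\wedge df_3\geq 2(s-1)\delta+\degw dg_1\wedge dg_2$ derived from (P12) gives the numerical bound in (P5). Properties (P6) and (P7) then reduce to bookkeeping with $\delta$. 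The delicate step will be (P9) and (P10): I would argue by contradiction, assuming $\degw\phi<\degw^{S_i}\phi$ and applying Lemma~\ref{lem:degS2}(i) to extract a relation $(f_3^{\w})^p\approx(f_1^{\w})^q$ with $p\geq 2$ (using (P8)); the resulting lower bound on $\degw\phi$ must then be shown to exceed the hypothesis $\degw\phi\leq\degw f_2$ (respectively $\degw\phi\leq\degw f_1$) in every subcase of $\degw f_1$ versus $\degw g_1$. The arithmetic here is tight — one has only the parity of $s$ and the inequality $p\geq 2$ to finish it — and this is where I expect the bulk of the case analysis to live.
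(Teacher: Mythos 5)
Your plan reproduces the paper's own proof essentially step for step: the same decomposition $\phi_i=g_i-f_i$, the same initial application of Lemma~\ref{lem:degS2}(ii) to $(g_2,g_1,\phi_3)$ giving (P1), (P2) and the last line of (P12), the same intermediate classification lemma (the paper's Lemma~\ref{lem:expression}), the same use of Lemma~\ref{lem:determinant}(ii),(iii) for (P12) and the uniqueness in (P11), and the same bifurcation on $\degw f_1=\degw g_1$ versus $\degw f_1<\degw g_1$ for (P5)--(P10). The only slip is attributing the (P9)/(P10) step to Lemma~\ref{lem:degS2}(i) alone --- for (P9) the paper must fall back on Lemma~\ref{lem:degS1}(i), since (P8) permits $f_1^{\w }\in k[f_3^{\w }]$, and for (P10) the relevant pair is $(f_1,f_2)$ together with the hypothesis $k[S_3]\neq k[g_1,g_2]$ --- but the mechanism you describe (a relation with $p\geq 2$, a lower bound on $\degw \phi $, and a tight case analysis) is exactly the paper's.
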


The following proposition 
is a consequence of Theorem~\ref{thm:SUreduction}.

\begin{proposition}\label{prop:equivalence}
{\rm (i)} 
If $(F,G)$ satisfies the quasi Shestakov-Umirbaev condition 
for $F,G\in \TT $, 
then there exist $E_i\in \E _i$ for $i=1,2$ 
with $\degw G\circ E_1=\degw G$ such that 
$(F,G\circ E_1\circ E_2)$ satisfies the Shestakov-Umirbaev condition. 

{\rm (ii)} For $F\in \TT $, 
it follows that 
$F$ admits a Shestakov-Umirbaev reduction if and only if 
$F$ admits a quasi Shestakov-Umirbaev reduction. 
\end{proposition}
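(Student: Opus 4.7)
The key input for (i) is the explicit normal form given by property (P11) of Theorem~\ref{thm:SUreduction}: there exist $a,b,c,d\in k$ and $\psi\in k[f_2]$ with $\degw\psi\le (s-1)\delta$ such that
\[
g_1=f_1+af_3^2+cf_3+\psi,\qquad g_2=f_2+bf_3+d.
\]
Comparing with (SU1), the discrepancy is precisely the summand $\psi\in k[f_2]$ inside $g_1$ and the constant $d$ inside $g_2$; the plan is to absorb these via elementary automorphisms in $\E_1$ and $\E_2$.

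Specifically, I would set
\[
E_1(y_1)=y_1-\psi(y_2-d)\in k[y_2],\qquad E_2(y_2)=y_2-d,
\]
and put $G'=G\circ E_1\circ E_2=(g_1',g_2',g_3)$. A direct computation gives $g_2'=g_2-d=f_2+bf_3$ and $g_1'=g_1-\psi(g_2-d)=g_1-\psi(f_2+bf_3)$. To verify that $g_1'-f_1\in k[f_3]$ has degree at most $2$, two regimes unify under this single formula. If $s\ge 5$, the last assertion of (P11) forces $a=b=0$, so $\psi(g_2-d)=\psi(f_2)$ and $g_1'=f_1+cf_3$. If $s=3$, the bound $\degw\psi\le 2\delta=\degw f_2$ forces $\psi=\lambda f_2+\mu$ with $\lambda,\mu\in k$, and a routine expansion cancels the $f_2$-terms to yield $g_1'=f_1+af_3^2+(c-\lambda b)f_3$. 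In either case (SU1) holds for $(F,G')$.

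The remaining verifications are mechanical. The bound $\degw\psi(g_2-d)<\degw g_1$ (which in the $s=3$ branch uses $\degw f_3\le (3/2)\delta$) yields $\degw G\circ E_1=\degw G$, $(g_1')^{\w}=g_1^{\w}$ and $(g_2')^{\w}=g_2^{\w}$; combined with the identity $k[g_1',g_2']=k[g_1,g_2]$ this reduces (SU2)--(SU5) to the corresponding parts of the quasi condition together with (P1) and (P3). For (SU6), the correction $\psi(g_2-d)\in k[g_2]$ gives $dg_1'\wedge dg_2'=dg_1\wedge dg_2$, so (SU6) transfers verbatim from $(F,G)$. Algebraic independence of $g_1',g_2',g_3'$ is automatic because $E_1\circ E_2\in\Aut_k\ky$ and $G\in\TT$.

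Part (ii) is then immediate: the implication from (SU) to quasi (SU) is by definition, as already observed after (SU$3'$), and the converse follows by applying (i) to the permuted pair $(F_\sigma,G_\sigma)$ witnessing a quasi reduction and reindexing the resulting triple. The main (mild) obstacle in (i) is to arrange that the single elementary move $E_1$ works across both the $s=3$ and $s\ge 5$ regimes: the $s=3$ case with $b\ne0$ produces a spurious $-\lambda bf_3$ term under the formula $y_1-\psi(y_2-d)$, but this term is linear in $f_3$ and is absorbed into the $f_3$-coefficient, which (SU1) permits freely.
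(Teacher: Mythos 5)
Your proposal is correct and follows essentially the same route as the paper: the same elementary maps $E_1(y_1)=y_1-\Psi (y_2-d)$ and $E_2(y_2)=y_2-d$ built from the (P11) normal form, the same cancellation computation (your split $s\geq 5$ versus $s=3$ is just a repackaging of the paper's split $b=0$ versus $b\neq 0$), the same transfer of (SU2)--(SU6) via $(g_i')^{\w }=g_i^{\w }$, $dg_1'\wedge dg_2'=dg_1\wedge dg_2$ and $k[g_1',g_2']=k[g_1,g_2]$, and the same deduction of (ii) from (i). One cosmetic point: the bound $\degw f_3\leq (3/2)\delta $ you invoke in the $s=3$ branch is not guaranteed when $a=0$, $b\neq 0$, but it is also not needed, since (P11) already gives $\degw f_3\leq \degw f_2=2\delta <3\delta =\degw g_1$, which is exactly what the paper uses.
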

\begin{proof}\rm
(i) Assume that $g_1$ and $g_2$ 
are expressed as in (P11). 
Take $\Psi \in k[y]$ such that $\Psi (f_2)=\psi $, 
and define $E_i\in \E _i$ for $i=1,2$ 
by $E_1(y_1)=y_1-\Psi (y_2-d)$ 
and $E_2(y_2)=y_2-d$. 
Then, 
$(E_1\circ E_2)(y_i)=E_i(y_i)$ for $i=1,2$. 
Set $G'=G\circ E_1\circ E_2$ 
and $g_i'=G'(y_i)$ for each $i$. 
We show that $(F,G')$ satisfies (SU1)--(SU6). 
By definition, 
$g_2'=g_2-d=f_2+bf_3$. 
If $b=0$, 
then $\Psi (g_2-d)=\Psi (f_2)=\psi $. 
Hence, 
$g_1'=g_1-\Psi (g_2-d)=f_1+af_3^2+cf_3$. 
Assume that $b\neq 0$. 
Then, 
$s=3$ by (P11). 
Hence, 
$\deg \psi \leq (s-1)\delta=2\delta $. 
Since $\psi $ belongs to $k[f_2]$ 
and since $\deg f_2=2\delta $ by (P3), 
we may write $\psi =ef_2+e'$, 
where $e,e'\in k$. 
Then, $\Psi =ey_2+e'$, and so 
\begin{equation}\label{eq:pf:equiv}
g_1'=g_1-(e(g_2-d)+e')=f_1+af_3^2+(c-be)f_3. 
\end{equation}
Thus, 
$g_1'$ and $g_2'$ are expressed as in (SU1). 
From the construction of $g_1'$ and $g_2'$, 
it follows that $k[g_1',g_2']=k[g_1,g_2]$. 
Since $(F,G)$ satisfies (SU$1'$) by assumption, 
$g_3'-f_3=g_3-f_3$ belongs to $k[g_1,g_2]$, 
and hence belongs to $k[g_1',g_2']$. 
Therefore, 
$(F,G')$ satisfies (SU1). 
We remark that $(F,G)$ satisfies 
(SU2) and (SU3) on account of (P3), (SU$2'$), and (P1), 
and satisfies (SU4)--(SU6) 
by the definition of the quasi Shestakov-Umirbaev condition. 
From this, 
we can easily conclude that $(F,G')$ satisfies (SU2)--(SU6) 
on the assumption that $dg_1'\wedge dg_2'=dg_1\wedge dg_2$ 
and $(g_i')^{\w }=g_i^{\w }$ for $i=1,2$. 
So, we verify these equalities. 
Since $g_2'=g_2-d$, 
we have $(g_2')^{\w }=g_2^{\w }$ and $dg_2'=dg_2$. 
Since $dg_1'=dg_1-\Psi ^{(1)}(g_2-d)dg_2$, 
we get $dg_1'\wedge dg_2'=dg_1\wedge dg_2$. 
If $b=0$, 
then $g_1'=g_1-\psi $. 
Since $\deg \psi \leq (s-1)\delta <s\delta =\deg g_1$, 
we have $(g_1')^{\w }=g_1^{\w }$. 
If $b\neq 0$, 
then $\deg f_3\leq \deg f_2$ by (P11), 
and so $\deg f_3<\deg g_1$ by (SU$3'$) and (P3). 
Hence, 
$(g_1')^{\w }=(g_1-\psi -bef_3)^{\w }=g_1^{\w }$. 
Thus, it holds that 
$dg_1'\wedge dg_2'=dg_1\wedge dg_2$ 
and $(g_i')^{\w }=g_i^{\w }$ for $i=1,2$. 
Thereby, $(F,G')$ satisfies (SU2)--(SU6). 
Therefore, 
$(F,G')$ satisfies the Shestakov-Umirbaev condition. 
Since $G\circ E_1=(g_1',g_2,g_3)$ and $\degw g_1'=\degw g_1$, 
we have $\degw G\circ E_1=\degw G$.

(ii) It is clear that $F$ admits a quasi Shestakov-Umirbaev 
reduction if $F$ admits a Shestakov-Umirbaev reduction. 
The converse follows from (i). 
\end{proof}

The following remark is readily verified. 
If $(F,G)$ satisfies 
(SU$2'$), (SU$3'$), (SU4), (SU5) and (SU6), 
then so does $(F',G')$. 
Here, 
$F'=(f_1',f_2',f_3')$ is an element of $\TT$ 
such that $\degw f_i'\leq \degw f_i$ for $i=1,2$ 
and $(f_3')^{\w }\approx f_3^{\w }+h$ 
for some $h\in k[g_1^{\w },g_2^{\w }]$, 
and $G'=(c_1g_1,c_2g_2,c_3g_3)$, 
where $c_1,c_2,c_3\in k\sm \zs $. 
Note that 
$F':=F\circ E$ satisfies this condition for $E\in \E _i$ 
such that $\degw F\circ E\leq \degw F$ if $i\in \{ 1,2\} $, 
and $(F\circ E)(y_3)^{\w }\approx f_3^{\w }+h$ 
for some $h\in k[g_1^{\w },g_2^{\w }]$ if $i=3$. 
Moreover, 
$(F',G')$ satisfies (SU$1'$) 
%when $F'=F\circ E$ for $E\in \E _i$ 
if the following conditions hold: 

(i) $c_1g_1-f_1'$ belongs to $k[f_2,f_3]$ if $i=1$ 
and $c_2=c_3=1$; 

(ii) $c_1g_1-f_1$ 
and $c_2g_2-f_2'$ respectively belong to $k[f_2',f_3]$ and 
$k[f_3]$ if $i=2$ and $c_3=1$; 

(iii) $c_1g_1-f_1$, $c_2g_2-f_2$ and $c_3g_3-f_3'$ 
respectively belong to $k[f_2,f_3']$, $k[f_3']$ 
and $k[g_1,g_2]$ if $i=3$.

To end this section, 
we prove a proposition which will be 
used in the proof of Theorem~\ref{thm:main1}. 
We note that the case (ii) does not arise 
if $\rank \w =n$, 
since $\deg f_j=\deg f_3$ implies 
$f_j^{\w }\approx f_3^{\w }$ if $\rank \w =n$, while 
$f_j^{\w }\not\approx f_3^{\w }$ for $j=1,2$ by (P8).

\begin{proposition}\label{prop:c1-c6}
Assume that $(F,G)$ satisfies 
the quasi Shestakov-Umirbaev condition. 
If $\degw F\circ E\leq \degw F$ for $E\in \E _i$, 
then the following assertions hold for $F':=F\circ E$, 
where $i\in \{ 1,2,3\} $. 

{\rm (i)} 
If $i=1$ or $i=2$, 
or if $i=3$, $k[f_1,f_2]\neq k[g_1,g_2]$ 
and $\degw f_j\neq \degw f_3$ for $j=1,2$, 
then $(F',G)$ satisfies 
the quasi Shestakov-Umirbaev condition. 

{\rm (ii)} 
If $i=3$, $k[f_1,f_2]\neq k[g_1,g_2]$ 
and $\degw f_j=\degw f_3$ for some $j\in \{ 1,2\} $, 
then there exists $u\in k\sm \zs $ such that 
$(F',G')$ or $(F'_{\tau },G'')$ 
satisfies the quasi Shestakov-Umirbaev condition. 
Here, 
$G'=(g_1',g_2',ug_3)$ and $G''=(g_1',g_2',-ug_3)$ 
with $g_j'=u^{-1}g_j$ and $g_l'=g_l$ 
for $l\in \{ 1,2\} \sm \{ j\} $, 
and $\tau =(j,3)$. 
\end{proposition}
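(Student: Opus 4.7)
The plan is to reduce the entire verification to checking (SU$1'$): the remark immediately preceding the proposition transfers (SU$2'$)--(SU6) automatically, once the new triple is verified to satisfy the (mild) shape conditions listed there. Write $\phi := (F\circ E)(y_i) - f_i \in k[S_i]$; the hypothesis $\degw F\circ E \leq \degw F$ gives $\degw \phi \leq \degw f_i$, and in every case (P9), (P10), and (P11) pin down the form of $\phi$ tightly enough that the three containments of (SU$1'$) can be checked by direct substitution.

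For part (i), $i=1$ is immediate since $\phi \in k[f_2, f_3]$. For $i=2$, (P9) forces $\phi = b'f_3 + d'$ for some $b',d' \in k$, so $k[f_2, f_3] = k[(F\circ E)(y_2), f_3]$ and the containments transfer. For $i=3$ under the additional hypotheses, (P7) combined with $\degw f_1 \neq \degw f_3$ yields $\degw \phi \leq \degw f_3 < \degw f_1$, so (P10) forces $\phi \in k[f_2]$. From the (P11) expression $g_2 = f_2 + bf_3 + d$: if $b \neq 0$, then (P11) gives $\degw f_3 \leq \degw f_2$, and combined with $\degw f_3 \neq \degw f_2$ this forces $\phi \in k$; if $b=0$, then $f_2 \in k[g_2]$, so $\phi \in k[g_2]$ and the third containment still holds. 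Either way, (SU$1'$) holds for $(F',G)$.

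For part (ii), I treat the subcases $j=1$ and $j=2$ separately. In the $j=1$ subcase, $\degw f_3 = \degw f_1 > \degw f_2$ forces $a=b=0$ in (P11); then (P10) gives $\phi = c''f_1 + \psi''$ with $\psi'' \in k[f_2]$. Direct checking shows that $(F', G')$ satisfies (SU$1'$) with $u = 1 - cc''$ when $cc'' \neq 1$, and that $(F'_\tau, G'')$ with $\tau = (1,3)$ and $u = c$ works when $cc'' = 1$ (which automatically forces $c \neq 0$). In the $j=2$ subcase, $\degw \phi \leq \degw f_3 = \degw f_2$ together with (P10) gives $\phi = \alpha f_2 + \beta$; if $\alpha = 0$ or $b = 0$ then $(F', G)$ already satisfies (SU$1'$) (with $u=1$), while if $\alpha \neq 0$ and $b \neq 0$ then $u=b$, $\tau=(2,3)$, and $(F'_\tau, G'')$ satisfy (SU$1'$). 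The key calculation
$$
u^{-1}g_2 - (F\circ E)(y_3) = (u^{-1} - \alpha) f_2 + (u^{-1} b - 1) f_3 + (u^{-1} d - \beta)
$$
pins down $u = b$.

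The main obstacle is part (ii): the scalar $u$ must exactly absorb the $\w$-leading-order mixing between $f_j$ and $f_3$ that $E$ introduces, and one must decide whether to rescale $G$ alone or also to swap $f_j$ with $f_3$ via $\tau$. The minus sign on $-ug_3$ arises from the third containment of (SU$1'$): after the swap, the $f_3$-term in $ug_3$ must cancel the $-uf_3$ coming from $f_j = g_j - uf_3 - (\text{terms in } k[g_1,g_2])$, which requires the sign to be negative. Once $u$, $\tau$, and the sign are correctly chosen in each subcase, (SU$2'$)--(SU6) follow from the remark preceding the proposition applied to the new (possibly permuted) triple.
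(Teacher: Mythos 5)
Your proposal is correct and follows essentially the same route as the paper's proof: transfer (SU$2'$)--(SU6) via the remark preceding the proposition, pin down the correction terms with (P9), (P10), (P11), and verify (SU$1'$) by direct computation, with the same choices of $u$ and $\tau$ in part (ii). The only deviation is the $j=2$ subcase of (ii), where you split on whether $\alpha$ and $b$ are both nonzero instead of on whether $\alpha b=1$ as the paper does (in your notation); both organizations work, since the swapped pair with $u=b$ satisfies (SU$1'$) whenever $b\neq 0$.
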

\begin{proof}\rm
Set $f'_i=F'(y_i)$ and $\phi _i=f_i'-f_i$. 
Then, $\degw f_i'\leq \degw f_i$, 
since $\degw F'\leq \degw F$ by assumption. 
Hence, 
$\degw \phi _i\leq \max \{ \deg f_i',\deg f_i\} \leq \degw f_i$. 
We note that 
$\phi _i$ belongs to $k[S_i]$. 
Besides, 
$g_1-f_1$, $g_2-f_2$ and $g_3-f_3$ belong to 
$k[f_2,f_3]$, $[f_3]$ and $k[g_1,g_2]$ by (SU$1'$), 
respectively, 
since $(F,G)$ satisfies 
the quasi Shestakov-Umirbaev condition.

(i) First, assume that $i\in \{ 1,2\} $, 
or $i=3$ and $\phi _3$ is contained in $k$. 
Since $\degw F'\leq \degw F$, 
we know by the remark above that 
$(F',G)$ satisfies (SU$2'$), (SU$3'$), (SU4), (SU5) and (SU6) 
if $i\in \{ 1,2\} $. 
If $i=3$, then $(f_3')^{\w }=f_3^{\w }$, 
since $f_3'-f_3=\phi _3$ belongs to $k$ by assumption. 
Hence, 
$(F',G)$ satisfies the five conditions similarly. 
We check that $(F',G)$ satisfies (SU$1'$). 
If $i=1$, 
then $g_1-f_1'=(g_1-f_1)-\phi _1$ belongs to $k[S_1]$, 
since so do $g_1-f_1$ and $\phi _1$. 
If $i=2$, 
then $\phi _2$ belongs to $k[f_3]$ by (P9), 
because $\phi _2$ is an element of $k[S_2]$ 
such that $\degw \phi _2\leq \degw f_2$. 
Hence, 
$k[f_2',f_3]=k[f_2,f_3]$, 
to which $g_1-f_1$ belongs. 
Moreover, 
$g_2-f_2'=(g_2-f_2)-\phi _2$ belongs to $k[f_3]$, 
since so does $g_2-f_2$. 
If $i=3$, then $\phi _3$ is contained in $k$. 
Hence, 
$g_1-f_1$ and $g_2-f_2$ belong to 
$k[f_2,f_3']=k[f_2,f_3]$ and $k[f_3']=k[f_3]$, 
respectively. 
Moreover, 
$g_3-f_3'=(g_3-f_3)-\phi _3$ 
belongs to $k[g_1,g_2]$, 
since so does $g_3-f_3$. 
Thus, 
$(F',G)$ satisfies (SU$1'$) in each case. 
Therefore, 
$(F',G)$ satisfies the quasi Shestakov-Umirbaev condition.

Next, assume that $i=3$ and 
$\phi _3$ is not contained in $k$. 
We show that $(f_3')^{\w }=f_3^{\w }+\alpha (g_2^{\w })^p$ 
for some $\alpha \in k$ and $p\in \N $, 
which implies that 
$(G',F)$ satisfies (SU$2'$), (SU$3'$), (SU4), (SU5) and (SU6) 
by the remark. 
Since $f_3'=f_3+\phi _3$, $\deg \phi _3\leq \deg f_3$, 
and $f_3^{\w }$ does not belong to $k[g_2^{\w }]$ by (SU4), 
it suffices to check that $\phi _3^{\w }\approx (g_2^{\w })^p$ 
for some $p\in \N $. 
We establish that $\phi _3$ belongs to $k[f_2]$, 
and $f_2^{\w }=g_2^{\w }$. 
Since $\degw f_1\neq \degw f_3$ by assumption, 
we have $\degw f_3<\degw f_1$ by (P7). 
Hence, 
$\degw \phi _3<\degw f_1$. 
Since $k[f_1,f_2]\neq k[g_1,g_2]$ by assumption, 
it follows from (P10) that 
$\phi _3$ belongs to $k[f_2]$. 
Since $\phi _3$ is not contained in $k$, 
we get $\degw f_2\leq \deg \phi _3$. 
Hence, 
$\deg f_2\leq \deg f_3$. 
Since $\deg f_2\neq \deg f_3$ by assumption, 
we get $\deg f_2<\deg f_3$. 
By (P11), it follows that $b=0$, 
where we write $g_2=f_2+bf_3+d$. 
Hence, $g_2=f_2+d$, 
and so $g_2^{\w }=f_2^{\w }$. 
Thus, we have proved that 
$(f_3')^{\w }=f_3^{\w }+\alpha (g_2^{\w })^p$ 
for some $\alpha \in k$ and $p\in \N $, 
and thereby proved that 
$(G',F)$ satisfies the five conditions. 
As for (SU$1'$), 
$g_2-f_2=d$ clearly belongs to $k[f_3']$. 
Since $\phi _3$ is contained in $k[f_2]$, 
we know that 
$g_1-f_1$ and $g_3-f_3'=(g_3-f_3)-\phi _3$ 
belong to $k[f_2,f_3']=k[f_2,f_3]$ and $k[g_1,g_2]=k[g_1,g_2,f_2]$, 
respectively. 
Thus, 
$(F',G)$ satisfies (SU$1'$), 
and therefore satisfies the quasi Shestakov-Umirbaev condition.

(ii) 
By (P7), 
$\degw f_2<\degw f_1=\degw f_3$ if $j=1$, 
and $\degw f_3=\degw f_2<\degw f_1$ if $j=2$. 
In view of (P5), 
$\degw f_1=\degw g_1$ in either case. 
Furthermore, 
in case $j=1$, 
we can  write $g_1=f_1+cf_3+\psi $ and $g_2=f_2+d$ by (P11), 
since $a=b=0$ if $\deg f_2<\deg f_3$. 
We claim that $g_j=f_j+\alpha f_3+\psi ^1$ 
and $\phi _3=\beta f_j+\psi ^2$ 
for some $\alpha ,\beta \in k$, 
and $\psi ^p\in k[f_2]$ for $p=1,2$ such that 
$\degw \psi ^p<\degw f_1$ if $j=1$, 
and $\deg \psi ^p\leq 0$ if $j=2$. 
In fact, 
$g_1$ has such an expression if $j=1$ as mentioned, 
since $\deg \psi \leq (s-1)\delta <s\delta =\deg g_1=\deg f_1$. 
If $j=1$, 
then $\deg \phi _3\leq \deg f_3=\deg f_1$. 
Hence, 
it follows from (P10) that 
$\phi _3$ is expressed as claimed. 
If $j=2$, 
then $\degw \phi _3\leq \degw f_3<\degw f_1$, 
and so $\phi _3$ belongs to $k[f_2]$ by (P10). 
Since $\degw f_2=\degw f_3$ 
and $\deg \phi _3\leq \deg f_3$, 
we have $\phi _3=\beta f_2+\psi ^2$ 
for some $\beta ,\psi ^2\in k$. 
The expression of $g_2$ is due to (P11). 
Therefore, 
$g_j$ and $\phi _3$ have expressions as claimed. 
Observe that 
$\degw \psi ^p<\degw f_j$ for $p=1,2$. 
Moreover, 
$\deg f_j=\deg f_3$, 
while $f_j^{\w }\not\approx f_3^{\w }$ by (P8). 
Thus, we have 
\begin{equation}\label{eq:fjo}
g_j^{\w }=f_j^{\w }+\alpha f_3^{\w },\quad 
(f_3')^{\w }
=(f_3+\phi _3)^{\w }
=f_3^{\w }+\beta f_j^{\w }
=(1-\alpha \beta )f_3^{\w }+\beta g_j^{\w }. 
\end{equation}

First, assume that $\alpha \beta \neq 1$. 
We show that $(F',G')$ satisfies 
the quasi Shestakov-Umirbaev condition for $u=1-\alpha\beta $. 
From the second equality of (\ref{eq:fjo}), 
we get $(f_3')^{\w }\approx f_3^{\w }+u^{-1}\beta g_j^{\w }$. 
Hence, 
$(F',G')$ satisfies 
(SU$2'$), (SU$3'$), (SU4), (SU5) and (SU6) 
as remarked. 
We check (SU$1'$). 
If $j=1$, 
then $g_2'=g_2$, 
and $g_2'-f_2=g_2-f_2=d$ belongs to $k[f_3']$. 
If $j=2$, 
then $f_3'-f_3=\phi _3$ is contained in $k[f_2]$ 
by (P10) as mentioned. 
Hence, $k[f_2,f_3']=k[f_2,f_3]$, 
to which $g_1'-f_1=g_1-f_1$ belongs. 
A direct forward computation shows that 
\begin{align*}
g_j'-f_j
&=\frac{1}{u}g_j-f_j
=\frac{1}{1-\alpha \beta }(f_j+\alpha f_3+\psi ^1)-f_j
=\frac{1}{1-\alpha \beta }(\alpha f_3'+\psi ^1-\alpha \psi ^2), \\
ug_3-f_3'
&=(1-\alpha \beta )g_3-(f_3+\beta f_j+\psi ^2)
=(1-\alpha \beta )(g_3-f_3)-\beta g_j+\beta \psi ^1-\psi ^2. 
\end{align*}
By the first expression, 
$g_j'-f_j$ belongs to $k[f_2,f_3']$ if $j=1$, 
and to $k[f_3']$ if $j=2$, 
since $\psi ^1$ and $\psi ^2$ 
belong to $k[f_2]$ if $j=1$, 
and to $k$ if $j=2$. 
We show that $ug_3-f_3'$ belongs to $k[g_1,g_2]$. 
Since $g_3-f_3$ and $g_j$ belong to $k[g_1,g_2]$, 
it suffices to check that $\psi ^1$ and $\psi ^2$ 
belong to $k[g_1,g_2]$. 
This is obvious if $j=2$. 
If $j=1$, then 
$g_2=f_2+d$. 
Hence, $k[g_2]=k[f_2]$, to which 
$\psi ^1$ and $\psi ^2$ belong. 
Thus, 
$ug_3-f_3'$ belongs to $k[g_1,g_2]$. 
This proves that 
$(F',G')$ satisfies (SU$1'$), 
Therefore, 
$(F',G')$ satisfies the quasi Shestakov-Umirbaev condition.

Next, assume that $\alpha \beta =1$. 
We show that 
$(F'_{\tau },G'')$ satisfies the quasi Shestakov-Umirbaev condition 
for $u=\alpha $. 
Write $F'_{\tau }=(h_1,h_2,h_3)$. 
Then, 
$\degw h_j=\degw f_3'\leq \degw f_3=\degw f_j$ 
and $\degw h_l=\degw f_l$ for $l\in \{ 1,2\} \sm \{ j\} $. 
By the first equality of (\ref{eq:fjo}), 
we get $h_3^{\w }=f_j^{\w }=-\alpha f_3^{\w }+g_j^{\w }$, 
since $\beta ^{-1}=\alpha $. 
Hence, 
$(F'_{\tau },G'')$ satisfies 
(SU$2'$), (SU$3'$), (SU4), (SU5) and (SU6) by the remark. 
We check (SU$1'$). 
As in case of $\alpha \beta \neq 1$ above, 
$g_2''-h_2=g_2-f_2=d$ belongs to $k[h_3]$ if $j=1$, 
and $g_1''-h_1=g_1-f_1$ belongs to $k[h_2,h_3]=k[f_3',f_2]=k[f_2,f_3]$ 
if $j=2$. 
A direct forward computation shows that 
\begin{align*}
g_j''-h_j&=\frac{1}{\alpha }g_j-f_3'=
\frac{1}{\alpha }(f_j+\alpha f_3+\psi ^1)-(f_3+\beta f_j+\psi ^2)
=\frac{1}{\alpha }\psi ^1-\psi ^2, \\
-ug_3-h_3&=-\alpha g_3-f_j=-\alpha (g_3-f_3)-\alpha f_3-f_j
=-\alpha (g_3-f_3)-g_j+\psi ^1. 
\end{align*}
By the first expression, 
$g_j''-h_j$ belongs to $k[h_2,h_3]=k[f_2,f_1]$ if $j=1$, 
and to $k[h_3]$ if $j=2$. 
As in case of $\alpha \beta \neq 1$ above, 
$-ug_3-h_3$ belongs to $k[g_1,g_2]$ by the second expression. 
Thus, 
$(F',G)$ satisfies (SU$1'$). 
Therefore, 
$(F',G)$ satisfies the quasi Shestakov-Umirbaev condition. 
\end{proof}

\section{Analysis of reductions}\label{sect:heart}
\setcounter{equation}{0}

In this section, 
we prove some technical propositions which will be needed 
in the proof of Theorem~\ref{thm:main1}. 
First, 
we show a useful lemma.

\begin{lemma}\label{lem:su}
Assume that $(F_{\sigma },G)$ 
satisfies the quasi Shestakov-Umirbaev condition 
for some $\sigma \in \sym _3$. 
Then, the following assertions hold$:$

{\rm (i)} If $\degw f_i<\degw f_1$ for $i=2,3$, 
then $\sigma (1)=1$. 

{\rm (ii)} 
If $(F_{\sigma },G)$ 
satisfies the Shestakov-Umirbaev condition, 
and if $\sigma (1)=1$ and 
$\degw df_1\wedge df_2<\degw f_1$, 
then $\sigma =\id $ 
and $(f_1,f_2)=(g_1,g_2)$. 

{\rm (iii)} 
If $\degw f_3<\degw f_2<\degw f_1$ and 
$2\degw f_1<3\degw f_2$, 
then either $3\degw f_2=4\degw f_3$, 
or $2\degw f_1=s\degw f_3$ 
for some odd number $s\geq 3$. 

{\rm (iv)} 
If $\degw df_2\wedge df_3<\degw df_1\wedge df_3
<\degw df_1\wedge df_2$, 
then one of the following holds$:$ 

{\rm (1)} $\sigma =\id $ and $2\degw g_1=3\degw f_2$. 

{\rm (2)} 
$\sigma =(1,2,3)$ and $2\degw f_2=s\degw f_3$ 
for some odd number $s\geq 3$. 
\end{lemma}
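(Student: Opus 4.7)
The plan is to read properties (P1)--(P12) of Theorem~\ref{thm:SUreduction} as applied to the reindexed tuple $F_\sigma$, so that e.g.\ (P3) reads $\degw f_{\sigma(2)}=2\delta$ and (P7) places the largest of $\degw f_1,\degw f_2,\degw f_3$ at position $\sigma(1)$. Assertion (i) is then immediate. For (ii), since $\sigma(1)=1$ we have $\sigma\in\{\id,(2,3)\}$; combining the second equality and third inequality of (P12) yields $\degw df_{\sigma(1)}\wedge df_{\sigma(3)}\ge(2s-2)\delta+\degw dg_1\wedge dg_2>s\delta\ge\degw f_1$, which for $\sigma=(2,3)$ reads $\degw df_1\wedge df_2>\degw f_1$, contradicting the hypothesis. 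Thus $\sigma=\id$, and applying the four-case formula of (P12) to $\degw df_1\wedge df_2$ (using $\degw f_3>(s-2)\delta$ from (P2) in the case $a\neq0$) excludes all sub-cases but $a=b=c=0$; the strict form of (SU1) then forces $g_1=f_1$ and $g_2=f_2$.

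For (iii), (P7) again forces $\sigma(1)=1$. If $\sigma=\id$, then (P3) gives $\degw f_2=2\delta$, so $2\degw f_1<3\degw f_2=6\delta$ together with $\degw g_1=s\delta\ge3\delta$ forces $\degw f_1<\degw g_1$; (P5) then yields $s=3$ and $\degw f_3=(3/2)\delta$, whence $3\degw f_2=4\degw f_3$. If $\sigma=(2,3)$, then $\degw f_3=2\delta$ by (P3), and the hypothesis $\degw f_2>2\delta$ is incompatible with the conclusion $\degw f_{\sigma(3)}=(3/2)\delta$ of (P5), so $\degw f_1=\degw g_1=s\delta$, giving $2\degw f_1=s\degw f_3$.

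For (iv), put $\alpha=\degw df_1\wedge df_2$, $\beta=\degw df_1\wedge df_3$, $\gamma=\degw df_2\wedge df_3$, so the hypothesis reads $\gamma<\beta<\alpha$. The second equality of (P12) imposes $\degw df_{\sigma(1)}\wedge df_{\sigma(3)}>\degw df_{\sigma(2)}\wedge df_{\sigma(3)}$, immediately ruling out $\sigma\in\{(1,2),(1,3),(1,3,2)\}$. For each of the remaining three permutations I then invoke the four-case formula of (P12) for $\degw df_{\sigma(1)}\wedge df_{\sigma(2)}$. For $\sigma=\id$, only $a\neq0$ survives, which via (P11) forces $s=3$, giving $2\degw g_1=6\delta=3\degw f_2$. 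For $\sigma=(2,3)$, only $a\neq0$ survives, yielding $\beta=\degw f_2+\gamma$ and $\alpha=(s-2)\delta+\gamma$; then $\beta<\alpha$ gives $\degw f_2<(s-2)\delta=\delta$ after (P11) forces $s=3$, contradicting $\degw f_2>\delta$ from (P7). For $\sigma=(1,2,3)$, only $a=b=c=0$ survives, and then $g_1=f_{\sigma(1)}+\psi=f_2+\psi$ with $\degw\psi\le(s-1)\delta<s\delta=\degw g_1$ forces $\degw f_2=s\delta$, whence $2\degw f_2=s\degw f_3$.

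The main obstacle is the case analysis in (iv): for each of the six permutations one must correctly identify which of $\alpha,\beta,\gamma$ each wedge $\degw df_{\sigma(i)}\wedge df_{\sigma(j)}$ represents, and simultaneously run the four sub-cases of (P12) against the strict inequalities $\gamma<\beta<\alpha$, so that only $\sigma=\id$ and $\sigma=(1,2,3)$ survive.
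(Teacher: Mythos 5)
Your proposal is correct and follows essentially the same route as the paper: all four parts are deduced from properties (P1)--(P12) of Theorem~\ref{thm:SUreduction} applied to the reindexed tuple $F_{\sigma}$, using (P12) together with (SU$2'$), (P2), (P3), (P5), (P7) and (P11) exactly as the paper does. The only difference is cosmetic: in (iv) you organize the argument permutation-by-permutation after first eliminating three permutations via the second equality of (P12), whereas the paper organizes it by the four cases of the first equality of (P12); the conclusions and the supporting estimates are the same.
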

\begin{proof}\rm
%We remark that (P1)--(P12) hold for $F_{\sigma }$ and $G$ 
%by Theorem~\ref{thm:SUreduction}, 
%since $(F_{\sigma },G)$ 
%satisfies the Shestakov-Umirbaev condition by assumption, 
%and hence satisfies the quasi Shestakov-Umirbaev condition. 
(i) By (P7), 
we have $\degw f_{\sigma (i)}\leq \degw f_{\sigma (1)}$ for $i=2,3$. 
Hence, $\sigma (1)=1$ if $\deg f_i<\deg f_1$ for $i=2,3$.

(ii) 
Since $\sigma (1)=1$ by assumption, 
we have 
$$
\deg f_1=\degw f_{\sigma (1)}\leq \deg g_1=s\delta <
\degw df_{\sigma (2)}\wedge df_{\sigma (3)}<
\degw df_{\sigma (1)}\wedge df_{\sigma (3)}=
\degw df_1\wedge df_{\sigma (3)}
$$
by (SU$2'$) and the last two conditions of (P12). 
Since $\degw df_1\wedge df_2<\degw f_1$ by assumption, 
we get $\sigma (3)\neq 2$. 
Hence, $\sigma (3)=3$, 
and so $\sigma =\id $. 
Because $(F,G)=(F_{\sigma },G)$ 
satisfies the Shestakov-Umirbaev condition by assumption, 
we may write $g_1$ and $g_2$ as in (SU1). 
It follows from the inequality above that 
the $\w $-degrees of $df_1\wedge df_3$ and $df_2\wedge df_3$ 
are greater  than $\deg f_1$, 
and hence greater than $\degw df_1\wedge df_2$. 
This implies that $a=b=c=0$ by the first equality of (P12). 
Therefore, we obtain $(f_1,f_2)=(g_1,g_2)$.

(iii) 
Since $\deg f_i<\deg f_1$ for $i=2,3$ by assumption, 
we have $\sigma (1)=1$ by (i). 
Hence, $\sigma =\id $ or $\sigma =(2,3)$. 
First, assume that $\sigma =\id $. 
Then, $\degw f_2=\degw g_2=2\delta $ by (P3). 
Since $2\degw f_1<3\degw f_2$ by assumption, 
we have 
$\degw f_1<(3/2)\degw f_2=3\delta \leq s\delta =\degw g_1$. 
Hence, $\deg f_3=(3/2)\delta $ by (P5). 
Therefore, 
we obtain 
$3\degw f_2=6\delta =4\degw f_3$. 
Next, 
assume that $\sigma =(2,3)$. 
Then, 
$$
\frac{3}{2}\delta <2\delta 
=\deg f_{\sigma (2)}=\deg f_3<\deg f_2=\deg f_{\sigma (3)}. 
$$ 
Hence, 
$\degw f_1=\degw g_1$ in view of (P5). 
By (P1), 
we have $2\degw g_1=s\deg g_2$ 
for some odd number $s\geq 3$. 
By (P3), 
$\degw g_2=\deg f_{\sigma (2)}=\degw f_3$. 
Therefore, 
$2\deg f_1=s\deg f_3$.

(iv) 
Set $\gamma _i=\degw df_p\wedge df_q$ for each $i$, 
where $p,q\in \N \sm \{ i\} $ with $1\leq p<q\leq 3$. 
By the first equality of (P12), 
we know that four possibilities exist for 
$\gamma _{\sigma (3)}=\deg df_{\sigma (1)}\wedge df_{\sigma (2)}$. 
Since 
$\gamma _1<\gamma _2<\gamma _3$ by assumption, 
we haves $\gamma _{\sigma (3)}\neq \gamma _{\sigma (i)}$ for $i=1,2$. 
Hence, 
the second and the third cases do not arise. 
Accordingly, 
$\gamma _{\sigma (3)}$ must be either 
$\degw f_{\sigma (3)}+\gamma _{\sigma (1)}$ 
or $\degw dg_1\wedge dg_2$, 
where $a\neq 0$ or $a=b=c=0$, respectively. 
In the former case, 
$\gamma _{\sigma (2)}=(s-2)\delta +\gamma _{\sigma (1)}
<\deg f_{\sigma (3)} +\gamma _{\sigma (1)}=\gamma _{\sigma (3)}$ 
by the second equality of (P12) and (P2). 
Hence, 
$\gamma _{\sigma (1)}<\gamma _{\sigma (2)}<\gamma _{\sigma (3)}$. 
Thus, we get $\sigma =\id $. 
Since $a\neq 0$, 
we have $s=3$ by (P11). 
Therefore, 
$2\deg g_1=3\deg g_2=3\deg f_2$ 
by (P1) and (P3). 
In the latter case, 
$\gamma _{\sigma (3)}=\deg dg_1\wedge dg_2
<\gamma _{\sigma (1)}<\gamma _{\sigma (2)}$ 
by the last two conditions of (P12). 
Hence, 
we get $\sigma =(1,2,3)$. 
Since $a=0$, 
we have $\deg f_2=\degw f_{\sigma (1)}=\degw g_1$ 
in view of (P5).  
By (P3), 
$\deg f_3=\deg f_{\sigma (2)}=\deg g_2$. 
Therefore, 
$2\degw f_2=s\degw f_3$ 
for some odd number $s\geq 3$ by (P1). 
\end{proof}

From Lemma~\ref{lem:su}(i) and (ii), we get the following proposition.

\begin{proposition}\label{prop:structure1}
Assume that 
\begin{equation}\label{eq:structure1}
\degw f_i<\degw f_1\quad(i=2,3)
\quad \text{and}\quad 
\degw df_1\wedge df_2<\degw f_1. 
\end{equation}
If $(F_{\sigma },G)$ 
satisfies the Shestakov-Umirbaev condition 
for some $\sigma \in \sym _3$ and $G\in \TT $, 
then there exists $E\in \E _3$ such that $F\circ E=G$. 
\end{proposition}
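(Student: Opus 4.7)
The plan is to chain together the two preceding parts of Lemma~\ref{lem:su} and then read off the elementary automorphism from condition (SU1). First, the hypothesis $\degw f_i < \degw f_1$ for $i=2,3$ puts us exactly in the setting of Lemma~\ref{lem:su}(i), which yields $\sigma(1) = 1$. Next, with $\sigma(1)=1$ in hand and the additional assumption $\degw df_1 \wedge df_2 < \degw f_1$, Lemma~\ref{lem:su}(ii) applies to $(F_\sigma, G)$ and concludes $\sigma = \id$ together with the equality of pairs $(f_1, f_2) = (g_1, g_2)$.

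Once we know $(f_1, f_2) = (g_1, g_2)$, condition (SU1) in the Shestakov-Umirbaev condition gives $g_3 - f_3 \in k[g_1, g_2] = k[f_1, f_2]$. Hence there exists $\phi \in k[y_1, y_2]$ with $g_3 - f_3 = \phi(f_1, f_2)$. Define $E \in \E_3$ by $E(y_1) = y_1$, $E(y_2) = y_2$, and $E(y_3) = y_3 + \phi$. Then $(F \circ E)(y_i) = f_i = g_i$ for $i = 1, 2$ and $(F \circ E)(y_3) = f_3 + \phi(f_1, f_2) = g_3$, so $F \circ E = G$.

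There is no substantive obstacle once Lemma~\ref{lem:su}(i), (ii) are invoked: the proposition is essentially their immediate corollary, with (SU1) supplying the polynomial $\phi$ that defines the required element of $\E_3$. The only point to check is that the two hypotheses of the proposition correspond exactly to the hypotheses of Lemma~\ref{lem:su}(i), (ii), which they do by construction.
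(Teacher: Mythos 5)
Your proof is correct and follows the same route as the paper: Lemma~\ref{lem:su}(i) gives $\sigma(1)=1$, Lemma~\ref{lem:su}(ii) gives $\sigma=\id$ and $(f_1,f_2)=(g_1,g_2)$, and then (SU1) yields the elementary automorphism $E\in\E_3$ with $F\circ E=G$. The explicit construction of $E$ from $\phi$ is exactly what the paper leaves implicit, so there is nothing to add.
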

\begin{proof}\rm
Since $\degw f_i<\degw f_1$ for $i=2,3$, 
we have $\sigma (1)=1$ by Lemma~\ref{lem:su}(i). 
Since $\degw df_1\wedge df_2<\degw f_1$, 
we get $\sigma =\id $ and 
$(f_1,f_2)=(g_1,g_2)$ by Lemma~\ref{lem:su}(ii). 
Then, 
(SU1) implies that $G=F\circ E$ for some $E\in \E _3$. 
\end{proof}

In the rest of this section, 
we assume that $f_3^{\w }$ does not belong to $k[f_2^{\w }]$, and 
\begin{equation}\label{eq:ana condition}
\degw f_1=s\delta ,\quad 
\degw f_2=2\delta, \quad 
(s-2)\delta <\degw f_3<s\delta 
\end{equation}
for some odd number $s\geq 3$ and $\delta \in \Gamma $. 
Under the assumption, 
$f_2^{\w }$ does not belong to $k[f_3^{\w }]$, 
because $f_2^{\w }\not\approx f_3^{\w }$ 
and $\degw f_2=2\delta \leq 2(s-2)\delta <\degw f_3^2$. 
Furthermore, 
$f_1^{\w }$ belongs to $k[f_3^{\w }]$ 
if and only if $f_1^{\w }\approx (f_3^{\w })^2$, 
in which case $s=3$. 
In fact, 
if $f_1^{\w }$ belongs to $k[f_3^{\w }]$, 
then $f_1^{\w }\approx (f_3^{\w })^l$ for some $l\in \N $. 
Since $\degw f_3<\degw f_1$ by assumption, $l\geq 2$. 
If $l\geq 3$ or $s\geq 5$, 
then $\degw f_1=s\delta \leq l(s-2)\delta <l\degw f_3$, 
a contradiction. 
Thus, 
$l=2$ and $s=3$. 
If $f_1^{\w }\approx (f_3^{\w })^2$, 
then $f_1^{\w }$ clearly belongs to $k[f_3^{\w }]$.

Under the assumption above, 
the following two propositions hold.

\begin{proposition}\label{prop:structure2}
Assume that 
\begin{equation}\label{eq:pfstr}
\degw df_1\wedge df_2\leq \degw f_3-(s-2)\delta +\ep , 
\end{equation}
where $\ep :=\degw df_1\wedge df_2\wedge df_3>0$. 
If $f_2^{\w }$ belongs to $k[S_2]^{\w }$, 
then $f_1^{\w }\approx (f_3^{\w })^2$. 
\end{proposition}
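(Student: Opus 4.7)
The plan is to argue by contradiction: suppose $f_1^{\w}\not\approx (f_3^{\w})^2$. By the discussion following (\ref{eq:ana condition}) this is equivalent to $f_1^{\w}\notin k[f_3^{\w}]$, while $f_2^{\w}\notin k[f_3^{\w}]$ is already recorded there. Since $f_2^{\w}\in k[S_2]^{\w}=k[f_1,f_3]^{\w}$, isolating the $2\delta $-graded part produces a $\phi_0\in k[f_1,f_3]$ with $\phi_0^{\w}=f_2^{\w}$, and hence $\degw (f_2-\phi_0)<2\delta $.

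The quadruple $(f_3,f_1,f_2,-\phi_0)$ then satisfies hypotheses (i)--(iv) of the remark preceding Theorem~\ref{thm:determinant}: (i) and (ii) are immediate from (\ref{eq:ana condition}), (iii) is the previous paragraph, and (iv) is $\degw (f_2-\phi_0)<\degw f_2$. Therefore Lemma~\ref{lem:degS2}(ii) applies and gives an odd integer $q\geq 3$ with $(f_1^{\w})^2\approx (f_3^{\w})^q$, so that $\degw f_3=2s\delta /q$. The constraint $(s-2)\delta <\degw f_3$ then forces $q<2s/(s-2)$, which rules out $s\geq 7$ outright and leaves only the admissible pairs $(s,q)\in \{(3,3),(3,5),(5,3)\}$. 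Substituting $\degw \phi_0=2\delta $ into the first inequality of (\ref{eq:lem:degS2:ii}) also yields $\degw df_1\wedge df_3\leq 2\delta -(q-2)s\delta /q$.

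Next, applying Lemma~\ref{lem:degS1}(ii) to the same quadruple gives
\[
\degw (f_2-\phi_0)\geq q\degw f_3+\ep -\degw df_2\wedge df_3-\degw f_1=s\delta +\ep -\degw df_2\wedge df_3,
\]
and combining with $\degw (f_2-\phi_0)<2\delta $ yields $\degw df_2\wedge df_3>(s-2)\delta +\ep $. The closing move is to apply Theorem~\ref{thm:determinant} to $\eta_i=df_i$ for $i=1,2,3$: the $i=1$ value $\degw f_1+\degw df_2\wedge df_3$ exceeds $(2s-2)\delta +\ep $, the $i=2$ value is bounded above by $2\delta +\degw df_1\wedge df_3$ via the inequality of the previous paragraph, and the $i=3$ value is bounded above by $\degw f_3+(\degw f_3-(s-2)\delta +\ep )$ via the hypothesis of the proposition. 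A direct check in each of the three admissible pairs $(s,q)$ shows that these latter two upper bounds are strictly smaller than $(2s-2)\delta +\ep $, so the maximum of $\degw \eta_i+\degw \tilde \eta_i$ is attained only at $i=1$, contradicting Theorem~\ref{thm:determinant}. The main bookkeeping obstacle is this final numerical verification across the three pairs $(s,q)$; the preceding steps are direct applications of the tools from Sections~\ref{sect:ineq} and \ref{sect:SUred}.
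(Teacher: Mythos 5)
Your proof is correct; I checked the individual steps: the reduction to the contradiction hypothesis $f_1^{\w}\not\approx (f_3^{\w})^2$, the application of the remark preceding Theorem~\ref{thm:determinant} to $(f_3,f_1,f_2,-\phi_0)$, the conclusions $p=2$ and $q\geq 3$ odd with $(f_1^{\w})^2\approx (f_3^{\w})^q$, the bounds $\degw df_1\wedge df_3\leq 2\delta-(q-2)s\delta/q$ and $\degw df_2\wedge df_3>(s-2)\delta+\ep$, and the final comparison of the quantities $\degw \eta_i+\degw \tilde{\eta}_i$ all hold. Where you diverge from the paper is in how the hypothesis (\ref{eq:pfstr}) is consumed. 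The paper applies Lemma~\ref{lem:degS1}(ii) with the roles $f=f_1$, $g=f_3$, $h=f_2$, so that the term $\degw df_1\wedge df_2$ — exactly the quantity controlled by (\ref{eq:pfstr}) — appears in the lower bound for $\degw (f_2-\phi_0)$; once Lemma~\ref{lem:degS2}(ii) pins down $p=2$ and $q\geq 3$ odd, the single inequality (\ref{eq:pfstr2}) already gives $2\delta>(s(3-4/q)-2)\delta\geq 3\delta$, a contradiction, with no case analysis and no appeal to Theorem~\ref{thm:determinant}. You run Lemma~\ref{lem:degS1}(ii) in the opposite orientation $f=f_3$, $g=f_1$, $h=f_2$, which puts $\degw df_2\wedge df_3$ into the bound; since (\ref{eq:pfstr}) says nothing directly about that term, you must reassemble the three pairwise wedge degrees through Theorem~\ref{thm:determinant}, aided by the first inequality of (\ref{eq:lem:degS2:ii}) and a check over $(s,q)\in \{(3,3),(3,5),(5,3)\}$ (in fact your two upper bounds fall below $(2s-2)\delta+\ep$ for every $s\geq 3$ and odd $q\geq 3$, so the restriction to those three pairs is not even needed at that stage; also note the $i=2$ value equals $2\delta+\degw df_1\wedge df_3$, the real work being done by the inequality from (\ref{eq:lem:degS2:ii})). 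What your route buys is a nice illustration that Theorem~\ref{thm:determinant} can compensate for the less favourable orientation of Lemma~\ref{lem:degS1}(ii); what the paper's choice buys is economy — one application of each lemma and a one-line numerical contradiction.
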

\begin{proof}\rm
By assumption, 
there exists $\phi _2\in k[S_2]$ such that 
$\phi _2^{\w }=f_2^{\w }$. 
As mentioned after (\ref{eq:ana condition}), 
$f_2^{\w }$ does not belong to $k[f_3^{\w }]$. 
Since $\deg f_2<\deg f_1$ by (\ref{eq:ana condition}), 
$f_2^{\w }$ does not belong to 
$k[f_1^{\w },f_3^{\w }]\sm k[f_3^{\w }]$. 
Thus, 
$f_2^{\w }$ does not belong to 
$k[f_1^{\w },f_3^{\w }]$, 
and hence neither does $\phi _2^{\w }$. 
Therefore, 
we have $\degw \phi _2<\deg^{S_2}\phi _2$. 
By Lemma~\ref{lem:degS1}(ii), 
there exist $p,q\in \N $ with $\gcd (p,q)=1$ 
such that $(f_1^{\w })^p\approx (f_3^{\w })^q$ 
and 
\begin{align}
2\delta =\deg f_2>\degw (f_2-\phi _2)
&\geq p\degw f_1+\ep -\degw df_1\wedge df_2-\degw f_3 \notag \\
&\geq p\degw f_1-(\degw f_3-(s-2)\delta )-\degw f_3\notag \\
&=\left( s\left( p+1-\frac{2p}{q}\right) -2\right) \delta 
\label{eq:pfstr2}. 
\end{align}
Here, we use (\ref{eq:pfstr}) for the last inequality, 
and $\degw f_3=(p/q)\degw f_1$ 
and $\degw f_1=s\delta $ 
for the last equality. 
Now, 
suppose to the contrary that 
$f_1^{\w }\not\approx (f_3^{\w })^2$. 
Then, the assumptions 
of Lemma~\ref{lem:degS2}(ii) hold for 
$f=f_3$ and $g=f_1$. 
In fact, 
$f_1^{\w }$ does not belong to $k[f_3^{\w }]$ 
if $f_1^{\w }\not\approx (f_3^{\w })^2$ 
as remarked after (\ref{eq:ana condition}). 
By (\ref{eq:ana condition}), 
it follows that $\deg f_3<\deg f_1$ 
and $\deg \phi _2=\deg f_2<\deg f_1$. 
Thus, 
we may conclude by Lemma~\ref{lem:degS2}(ii) that $p=2$, 
and $q\geq 3$ is an odd number. 
Consequently, 
the right-hand side of (\ref{eq:pfstr2}) is at least 
$(3(2+1-2\cdot 2/3)-2)\delta =3\delta $, 
a contradiction. 
Therefore, we must have 
$f_1^{\w }\approx (f_3^{\w })^2$ if $f_2^{\w }$ 
belongs to $k[S_2]^{\w }$. 
\end{proof}

The following proposition 
forms the core of the proof of Theorem~\ref{thm:main1}.

\begin{proposition}\label{prop:structure3}
Assume that 
\begin{equation}\label{eq:ppfstr2}
\degw df_1\wedge df_2<\degw f_3-(s-2)\delta +\min \{ \delta ,\ep \} .
\end{equation}
If there exists $\phi _1\in k[S_1]$ such that 
$\degw f_1'<\degw f_1$, then either 
$f_1^{\w }\approx (f_3^{\w })^2$, 
or $(f_2^{\w })^2\approx (f_3^{\w })^3$ 
and $F'$ does not admit a Shestakov-Umirbaev reduction, 
where $f_1'=f_1+\phi _1$ and $F'=(f_1',f_2,f_3)$. 
Assume further that 
$(f_1')^{\w }$ does not belong to $k[S_1]^{\w }$. 
Then, the following assertions hold$:$ 

{\rm (1)} 
$f_i^{\w }$ does not belong to $k[S_i']^{\w }$ for $i=2,3$, 
where $S_i'=\{ f_1',f_2,f_3\} \sm \{ f_i\} $. 
Hence, 
$F'$ does not admit an elementary reduction.

$(2)$ 
If $f_1^{\w }\approx (f_3^{\w })^2$ and 
$(F'_\sigma ,G)$ satisfies the quasi Shestakov-Umirbaev condition 
for some $\sigma \in \sym _3$ and $G\in \TT $, 
then $\sigma =\id $ and $(F,G)$ satisfies 
the quasi Shestakov-Umirbaev condition. 
\end{proposition}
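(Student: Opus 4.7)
The plan is to establish the dichotomy for the first half of the proposition, then derive parts (1) and (2) from it under the further hypothesis. Since $\degw f_1'<\degw f_1=s\delta$ and $\phi_1\in k[f_2,f_3]$, leading-term cancellation forces $\phi_1^{\w}\approx f_1^{\w}$, which has $\w$-degree $s\delta$. The dichotomy is then driven by whether $\phi_1^{\w}$ belongs to $k[f_2^{\w},f_3^{\w}]$.

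If $\phi_1^{\w}\in k[f_2^{\w},f_3^{\w}]$, then $\degw ^{S_1}\phi_1=\degw \phi_1=s\delta$, so Lemma~\ref{lem:expression}(i) gives $\phi_1=af_3^2+cf_3+\psi$ with $\psi\in k[f_2]$ of $\w$-degree at most $(s-1)\delta$. Matching the $s\delta$-component of $\phi_1^{\w}$ against $(s-2)\delta<\degw f_3<s\delta$ and $s$ odd, the only viable leading term is $af_3^2$ with $a\neq 0$, forcing $s=3$, $\degw f_3=(3/2)\delta$ and $f_1^{\w}\approx(f_3^{\w})^2$. If instead $\phi_1^{\w}\notin k[f_2^{\w},f_3^{\w}]$, applying Lemma~\ref{lem:degS1}(ii) to $h=f_1$, $f=f_2$, $g=f_3$ yields coprime $p,q\in\N$ with $(f_3^{\w})^p\approx(f_2^{\w})^q$ and
\[
\degw f_1'\ge 2q\delta+\ep-\degw df_1\wedge df_2-\degw f_3.
\]
Combining with $\degw f_1'<s\delta$ and \eqref{eq:ppfstr2}, and using $\ep-\min\{\delta,\ep\}\ge 0$, gives $\degw f_3>(q-1)\delta$, equivalently $q(p-2)<p$. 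Since $p\ge 2$ (from $f_3^{\w}\notin k[f_2^{\w}]$), the ranges $(s-2)\delta<\degw f_3=(2q/p)\delta<s\delta$ together with $s$ odd eliminate every coprime pair except $(p,q)=(3,2)$, giving $s=3$, $\degw f_3=(4/3)\delta$, and $(f_2^{\w})^2\approx(f_3^{\w})^3$. That $F'$ admits no Shestakov-Umirbaev reduction in this case follows from Proposition~\ref{prop:equivalence}(ii): any reducing pair $(F'_\tau,G')$ would satisfy (P1), (P3), (P7), which are incompatible with the weight triple $(\degw f_1',2\delta,(4/3)\delta)$ and the relation $(f_2^{\w})^2\approx(f_3^{\w})^3$ for every $\tau\in\sym_3$.

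For part (1), under $(f_1')^{\w}\notin k[f_2^{\w},f_3^{\w}]$, suppose for contradiction $\phi^{\w}=f_2^{\w}$ for some $\phi\in k[f_1',f_3]$; expanding $\phi^{\w}$ in monomials $((f_1')^{\w})^i(f_3^{\w})^j$ of total weight $2\delta$ and invoking the weight profiles from the dichotomy yields either $(f_1')^{\w}\in k[f_2^{\w},f_3^{\w}]$ (contradicting the hypothesis) or no admissible exponent pair exists. The same reasoning eliminates $f_3^{\w}\in k[S_3']^{\w}$, so $F'$ admits no elementary reduction. For part (2), assume further $f_1^{\w}\approx(f_3^{\w})^2$, so $s=3$ and $(\degw f_1,\degw f_2,\degw f_3)=(3\delta,2\delta,(3/2)\delta)$ with $\degw f_1'<3\delta$. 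If $(F'_\sigma,G)$ satisfies quasi SU, (P7) forces $\degw f'_{\sigma(1)}$ to be strictly largest among $\{\degw f_1',2\delta,(3/2)\delta\}$; the cases $\sigma(1)\in\{2,3\}$ are ruled out by combining $(f_1')^{\w}\notin k[f_2^{\w},f_3^{\w}]$ with (P9), (P10) applied to $(F'_\sigma,G)$, leaving $\sigma=\id$. To transfer quasi SU from $(F',G)$ to $(F,G)$: (SU$1'$) follows from $f_1-f_1'=-\phi_1\in k[f_2,f_3]$ together with (SU$1'$) for $(F',G)$; (SU$2'$) follows since $\degw g_1=s'\delta$ for some odd $s'\ge 3$ by (P1) and (P3) applied to $(F',G)$, whence $\degw g_1\ge 3\delta=\degw f_1$; and (SU$3'$), (SU4), (SU5), (SU6) involve only $G$ or the shared pair $(f_2,f_3)$ and so transfer verbatim.

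The main obstacle is the arithmetic case analysis in the dichotomy, especially the careful interplay between $\ep$, $\delta$ and $\degw df_1\wedge df_2$ in the key inequality $q(p-2)<p$, together with the exhaustive check of coprime pairs $(p,q)$ under the range constraints on $\degw f_3$. A secondary obstacle is the permutation-by-permutation elimination both for ruling out Shestakov-Umirbaev reductions of $F'$ in case (B) and for establishing $\sigma=\id$ in part (2); these require delicate use of the refined degree information in (P9) and (P10).
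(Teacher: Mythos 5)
Your first half (the dichotomy) is essentially the paper's own argument: the split according to whether $\phi _1^{\w }$ lies in $k[f_2^{\w },f_3^{\w }]$, Lemma~\ref{lem:expression}(i) in the dependent case, and Lemma~\ref{lem:degS1}(ii) plus the arithmetic elimination of coprime pairs to force $(p,q)=(3,2)$, $s=3$, $\degw f_3=(4/3)\delta $, all match Section~\ref{sect:heart}. However, already here your justification that $F'$ admits no Shestakov-Umirbaev reduction is under-powered: (P1), (P3), (P7) alone are \emph{not} incompatible with the data. You need the explicit window $(7/3)\delta <\degw f_1'<3\delta $ (which follows from your displayed inequality but is never stated) together with (P5); the paper packages this as Lemma~\ref{lem:su}(iii), whose two alternatives $3\degw f_2=4\degw f_3$ and $2\degw f_1'=s'\degw f_3$ are then excluded numerically. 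Without the lower bound, e.g.\ $\degw f_1'=2\delta $ gives $2\degw f_1'=3\degw f_3$ and nothing you cite rules it out.

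The genuine gaps are in (1) and (2). For (1) you propose to expand $\phi ^{\w }$ "in monomials $((f_1')^{\w })^i(f_3^{\w })^j$ of total weight $2\delta $", but this presupposes $\phi ^{\w }\in k[(f_1')^{\w },f_3^{\w }]$, i.e.\ that there is no leading-term cancellation. The entire difficulty is the opposite case $\degw \phi <\degw ^{S_i'}\phi $, which occurs precisely because $(f_1')^{\w }$ and $f_3^{\w }$ may be algebraically dependent; your sketch never addresses it. The paper handles it by first proving the key estimate $\degw f_1'>\delta $ in (\ref{eq:delta bound}) via Theorem~\ref{thm:inequality} with $m_{\w }^{f_3}(\Phi )=1$, and the wedge-degree identities (\ref{eq:pfstrdet2}) and (\ref{eq:pfstrdet3}) via Lemma~\ref{lem:determinant}, and then deriving contradictions through a two-stage use of Lemma~\ref{lem:degS1}(i),(ii) (for $f_2$) and Lemma~\ref{lem:degS2}(i) against (\ref{eq:pfstrdet3}) (for $f_3$); none of this machinery appears in your proposal, and the paper explicitly remarks that (\ref{eq:delta bound}) is the key estimation behind Theorem~\ref{thm:type IV}, so it cannot be waved away. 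In (2), your opening step is incorrect: (P7) does not make $\degw f'_{\sigma (1)}$ the strict maximum, and since you only know $\degw f_1'>\delta $, the degree of $f_1'$ may well be below $2\delta $, so $\sigma (1)=1$ is not forced by degree comparison; moreover, even granting $\sigma (1)=1$ you have not excluded $\sigma =(2,3)$, and the appeal to (P9), (P10) is not a worked argument. The paper instead pins down $\sigma =\id $ by Lemma~\ref{lem:su}(iv), using the chain $\degw df_2\wedge df_3<\degw df_1'\wedge df_3<\degw df_1'\wedge df_2$ — which again rests on (\ref{eq:pfstrdet2}) and (\ref{eq:pfstrdet3}), i.e.\ on exactly the part of the proof your sketch omits. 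The transfer of the quasi Shestakov-Umirbaev condition from $(F',G)$ back to $(F,G)$ at the end of your (2) is fine.
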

\begin{proof}\rm
To begin with, 
we show that 
$\degw \phi _1<\degw ^{S_1}\phi _1$ 
if $f_1^{\w }\not\approx (f_3^{\w })^2$. 
Since $\phi _1$ is an element of $k[S_1]$, 
we check that $\phi _1^{\w }$ 
does not belong to $k[f_2^{\w },f_3^{\w }]$. 
By the assumption that 
$\deg (f_1+\phi _1)<\deg f_1$, 
we have $\phi _1^{\w }\approx f_1^{\w }$. 
Since $\degw f_1=(s/2)\deg f_2$ 
for an odd number $s$ by (\ref{eq:ana condition}), 
$f_1^{\w }$ does not belong to $k[f_2^{\w }]$. 
Since $f_1^{\w }\not\approx (f_3^{\w })^2$ by assumption, 
$f_1^{\w }$ does not belong to $k[f_3^{\w }]$ 
as mentioned after (\ref{eq:ana condition}). 
By (\ref{eq:ana condition}), 
it follows that 
$$
\degw f_1=2\delta +(s-2)\delta <\degw f_2+\degw f_3. 
$$
Hence, 
$f_1^{\w }$ does not belong to 
$k[f_2^{\w },f_3^{\w }]\sm 
(k[f_2^{\w }]\cup k[f_3^{\w }])$. 
Thus, $f_1^{\w }$ 
does not belong to $k[f_2^{\w },f_3^{\w }]$, 
and hence neither does $\phi _3^{\w }$. 
Therefore, 
$\degw \phi _1<\degw ^{S_1}\phi _1$ 
if $f_1^{\w }\not\approx (f_3^{\w })^2$.

First, 
we show that 
$(f_2^{\w })^2\approx (f_3^{\w })^3$ 
and $F'$ does not admit a Shestakov-Umirbaev reduction 
%under the assumption that 
in the case where $\degw \phi _1<\degw ^{S_1}\phi _1$. 
Then, 
we obtain the first part of the proposition as a consequence, 
since $\degw \phi _1<\degw ^{S_1}\phi _1$ if 
$f_1^{\w }\not\approx (f_3^{\w })^2$ as shown above. 
By Lemma~\ref{lem:degS1}(ii), 
there exist $p,q\in \N $ with $\gcd (p,q)=1$ 
such that $(f_3^{\w })^p\approx (f_2^{\w })^q$ 
and 
\begin{align}\label{eq:pfstr3}
s\delta =\deg f_1>\degw (f_1+\phi _1)
&\geq q\degw f_2+\ep -\degw df_1\wedge df_2-\degw f_3\notag \\
&>q\degw f_2-(\degw f_3-(s-2)\delta )-\degw f_3\notag \\
&=\left( q\left( 2-\frac{4}{p}\right) +s-2\right) \delta ,
\end{align}
where we use (\ref{eq:ppfstr2}) for the last inequality, 
and $\deg f_3=(q/p)\deg f_2$ and $\deg f_2=2\delta $ 
for the last equality. 
Recall that we are assuming that 
$f_3^{\w }$ does not belong to $k[f_2^{\w }]$, 
while 
$f_2^{\w }$ does not belong to $k[f_3^{\w }]$ 
as mentioned after (\ref{eq:ana condition}). 
Hence, $p\geq 2$ and $q\geq 2$. 
We show that $p=3$ and $q=2$ by contradiction. 
Supposing that $p=2$, 
we have $\degw f_3=(q/2)\degw f_2=q\delta $. 
Hence, $(s-2)\delta <q\delta <s\delta $ by (\ref{eq:ana condition}), 
and so $q=s-1$. 
Since $p=2$ and $s$ is an odd number, 
we get $\gcd (p,q)=2$, 
a contradiction. 
If $p\geq 4$, 
then the right-hand side of (\ref{eq:pfstr3}) 
would be at least $(q+s-2)\delta \geq s\delta $, 
since $q\geq 2$. 
This is a contradiction. 
Thus, we get $p=3$. 
If $q\geq 3$, 
then the right-hand side of (\ref{eq:pfstr3}) 
would be at least $s\delta $, 
a contradiction. 
Hence, we have $q=2$. 
Therefore, 
we obtain 
$(f_3^{\w })^3\approx (f_2^{\w })^2$. 
From this, 
we know that 
$\degw f_3=(2/3)\degw f_2=(4/3)\delta $. 
Since $\degw f_3>(s-2)\delta $ by (\ref{eq:ana condition}), 
it follows that $s=3$. 
Consequently, 
the right-hand side of (\ref{eq:pfstr3}) 
is equal to $(7/3)\delta $. 
Thus, we get
\begin{equation}\label{eq:degrelation}
\deg f_3=\frac{4}{3}\delta 
<\deg f_2=2\delta <\frac{7}{3}\delta 
<\deg f_1'<3\delta . 
\end{equation}
It follows that $2\degw f_1'<6\delta =3\degw f_2$. 
Then, by Lemma~\ref{lem:su}(iii), 
we can conclude that $F'$ does not admit 
a quasi Shestakov-Umirbaev reduction, 
since 
$$
3\degw f_2=6\delta \neq \frac{16}{3}\delta =4\degw f_3, \ 
3\deg f_3=4\delta <\frac{14}{3}\delta 
<2\degw f_1'<6\delta <\frac{20}{3}\delta =5\degw f_3. 
$$ 
Therefore, 
$F'$ does not admit a Shestakov-Umirbaev reduction. 

In this situation, 
assume further that 
$(f_1')^{\w }$ does not belong to $k[S_1]^{\w }$. 
We show that $f_i^{\w }$ does not belong to $k[S_i']$ for $i=2,3$ 
by contradiction. 
Suppose that there exists $\phi _i\in k[S_i']$ 
such that $\phi _i^{\w }=f_i^{\w }$ for some $i\in \{ 2,3\} $. 
Then, 
the conditions (i)--(iv) after Lemma~\ref{lem:degS2} 
are fulfilled for $f=f_j$, $g=f_1'$, $h=f_i$ 
and $\phi =\phi _i$, 
where $j\in \{ 2,3\} \sm \{ i\} $. 
Actually, 
$f_1'=f_1+\phi _1$, $f_2$ and $f_3$ are 
algebraically independent over $k$, 
since so are $f_1$, $f_2$ and $f_3$, 
and $\phi _1$ is an element of $k[S_1]$. 
Moreover, 
$\degw f_l<\degw f_1'$ for $l=2,3$ 
by (\ref{eq:degrelation}), 
and $f_i^{\w }$ does not belong to $k[f_j^{\w }]$ 
by assumption, 
since $(i,j)$ is $(2,3)$ or $(3,2)$. 
By assumption, 
$(f_1')^{\w }$ does not belong to $k[S_1]^{\w }$, 
and hence does not belong to $k[f_j^{\w }]$. 
By the choice of $\phi _i$, 
we have $\deg (f_i-\phi _i)<\deg f_i$. 
Thus, (i)--(iv) are satisfied. 
By Lemma~\ref{lem:degS2}(ii) and the remark following it, 
we may conclude that 
$((f_1')^{\w })^2\approx (f_j^{\w })^q$ 
for some odd number $q\geq 3$. 
Hence, 
$\deg f_1'=(q/2)\deg f_j$ 
is equal to $(2q/3)\delta $ if $j=3$, 
and $q\delta $ if $j=2$. 
Since no odd number $q\geq 3$ 
satisfies $7/3<2q/3<3$ or $7/3<q<3$, 
we get a contradiction by (\ref{eq:degrelation}). 
Therefore, 
$f_i^{\w }$ does not belong to $k[S_i']^{\w }$ for $i=2,3$. 
Since $(f_1')^{\w }$ does not belong to $k[S_1]^{\w }$, 
it follows that $F'$ does not admit an elementary reduction. 
This proves (1) 
in the case where $\degw \phi _1<\degw ^{S_1}\phi _1$. 
The assumption of (2) does not hold 
in this situation, 
since $\deg f_1=3\delta \neq (8/3)\delta =\deg f_3^2$ 
by (\ref{eq:degrelation}).

Next, 
we show (1) and (2) 
in the case where 
$\degw \phi _1=\degw ^{S_1}\phi _1$ 
and $(f_1')^{\w }$ does not belong to $k[S_1]^{\w }$. 
By the remark in the first paragraph, 
we know that $f_1^{\w }\approx (f_3^{\w })^2$ 
if $\degw \phi _1=\degw ^{S_1}\phi _1$. 
As mentioned after (\ref{eq:ana condition}), 
$f_1^{\w }\approx (f_3^{\w })^2$ implies $s=3$. 
Hence, 
$\degw f_1=s\delta =3\delta $, 
and so 
$\degw f_3=(1/2)\degw f_1=(3/2)\delta $. 
Since $\deg ^{S_1}\phi _1=\deg \phi _1$ and 
$\deg \phi _1=\deg f_1$, 
we have $\deg ^{S_1}\phi _1=3\delta $. 
By Lemma~\ref{lem:expression}(i), 
we may write $\phi _1=af_3^2+cf_3+\psi $, 
where $a,c\in k$ and $\psi \in k[f_2]$ 
with $\degw \psi \leq (3-1)\delta =2\delta $. 
Since $\deg f_2=2\delta $, 
we get $\psi =ef_2+e'$ for some $e,e'\in k$. 
Note that $a\neq 0$, 
for otherwise 
$\deg \phi _1\leq 
\max \{ \deg f_3,\deg \psi \} <\deg f_1$, 
a contradiction. 
We claim that the conditions (a)--(d) 
before Lemma~\ref{lem:determinant} hold for 
$k_i=f_i$ for $i=1,2,3$ 
and $k_i'=k_i$ for $i=1,2$. 
In fact, 
(a)--(c) follow from 
$\degw k_1=\degw k_1'=3\delta $, 
$\degw k_2=\degw k_2'=2\delta $ 
and $\degw k_3=(3/2)\delta $. 
The left-hand side of (d) is less than $3\delta $, 
since $\degw df_1\wedge df_2<\deg f_3=(3/2)\delta $ 
by (\ref{eq:ppfstr2}) with $s=3$. 
Because the right-hand side of (d) 
is greater than $\deg k_1=3\delta $, 
we know that (d) holds true. 
Therefore, by (\ref{eq:detfirst}), 
we obtain 
\begin{equation}\label{eq:pfstrdet1}
\degw df_1\wedge df_3
=\deg f_1-\deg f_2+\degw df_2\wedge df_3
=\delta +\degw df_2\wedge df_3. 
\end{equation}
Hence, 
$\deg df_2\wedge df_3<\deg df_1\wedge df_3$. 
Since 
$d\phi _1\wedge df_3=d\psi \wedge df_3=edf_2\wedge df_3$, 
we have  
$df_1'\wedge df_3=df_1\wedge df_3+edf_2\wedge df_3$. 
Thus, $\degw df_1'\wedge df_3=\degw df_1\wedge df_3$, 
and so 
\begin{equation}\label{eq:pfstrdet2}
\degw df_1'\wedge df_3
=\delta +\degw df_2\wedge df_3 
\end{equation}
by (\ref{eq:pfstrdet1}). 
For the same reason as above, 
the conditions (a)--(d) before 
Lemma~\ref{lem:determinant} hold for 
$k_1=f_1'$, $k_i=f_i$ for $i=2,3$, 
$k_1'=f_1=k_1-ak_3^2-ck_3-\psi $ 
and $k_2'=k_2$, 
for $k_1$ is not involved in the conditions. 
Since $a\neq 0$ and 
$\degw df_1\wedge df_2<\degw f_3$, 
we know by Lemma~\ref{lem:determinant}(i) that 
\begin{equation}\label{eq:pfstrdet3}
\degw df_1'\wedge df_2
=\deg f_3+\degw df_2\wedge df_3
=\frac{3}{2}\delta +\degw df_2\wedge df_3. 
\end{equation}
Set $\Phi =f_1+ay^2+cy+ef_2+e'$. 
Then, 
$\deg _{\w }^{f_3}\Phi =\degw f_1$, 
while $\degw \Phi (f_3)=\degw f_1'<\degw f_1$. 
Since $\Phi ^{(1)}=2ay+c$ and $a\neq 0$, 
we have 
$\deg _{\w }^{f_3}\Phi ^{(1)}
=\degw f_3=\degw \Phi ^{(1)}(f_3)$. 
Hence, $m_{\w }^{f_3}(\Phi )=1$. 
By Theorem~\ref{thm:inequality}, 
it follows that 
\begin{equation}\label{eq:delta bound}
\begin{aligned}
\degw f_1'=\degw \Phi (f_3)
&\geq \deg _{\w }^{f_3}\Phi 
+m_{\w }^{f_3}(\Phi )(\ep 
-\degw df_1\wedge df_2-\degw f_3)\\
&=\deg f_1
+\ep -\degw df_1\wedge df_2-\degw f_3\\
&>\degw f_1 -2\degw f_3+(s-2)\delta =\delta , 
\end{aligned}
\end{equation}
where the last inequality is due to (\ref{eq:ppfstr2}). 
With the aid of (\ref{eq:delta bound}), 
we show the following: 
\begin{equation*}
{\rm (i)}\ (f_1')^{\w }\not\in k[f_2^{\w },f_3^{\w }]. \qquad 
{\rm (ii)}\ f_2^{\w }\not\in k[(f_1')^{\w },f_3^{\w }]. \qquad 
{\rm (iii)}\ f_3^{\w }\not\in k[(f_1')^{\w },f_2^{\w }].
\end{equation*}
Since $k[f_2^{\w },f_3^{\w }]$ is contained in $k[S_1]^{\w }$, 
(i) follows from the assumption that 
$(f_1')^{\w }$ does not belong to $k[S_1]^{\w }$. 
In particular, 
$f_2^{\w }\not\approx (f_1')^{\w }$. 
By (\ref{eq:delta bound}), 
$\degw f_2=2\delta <\degw (f_1')^2$. 
Hence, 
$f_2^{\w }$ does not belong to $k[(f_1')^{\w }]$. 
Since 
$\degw f_3=(3/2)\delta <\degw f_2<3\delta =\degw f_3^2$, 
it follows that 
$f_2^{\w }$ does not belong to $k[f_3^{\w }]$. 
By (\ref{eq:delta bound}), 
$\degw f_2<\delta +(3/2)\delta <\degw f_1'f_3$, 
and so $f_2^{\w }$ does not belong to 
$k[(f_1')^{\w },f_3^{\w }]\sm 
(k[(f_1')^{\w }]\cup k[f_3^{\w }])$. 
Thus, 
$f_2^{\w }$ does not belong to 
$k[(f_1')^{\w },f_3^{\w }]$, 
proving (ii). 
It follows that $f_3^{\w }\not\approx (f_1')^{\w }$ by (i), 
and $\degw f_3<2\delta <\degw (f_1')^2$ 
by (\ref{eq:delta bound}). 
Hence, 
$f_3^{\w }$ does not belong to $k[(f_1')^{\w }]$. 
Since $\degw f_3<\degw f_2$, 
we get that $f_3^{\w }$ does not belong to 
$k[(f_1')^{\w },f_2^{\w }]\sm k[(f_1')^{\w }]$. 
This proves (iii).

Now, we show that 
$f_2^{\w }$ does not belong to $k[S_2']^{\w }$ 
by contradiction. 
Supposing the contrary, 
there exists $\phi _2\in k[S_2']$ such that 
$\phi _2^{\w }=f_2^{\w }$. 
Then, 
$\phi _2^{\w }$ does not belong to 
$k[(f_1')^{\w },f_3^{\w }]$ by (ii). 
Hence, 
$\degw \phi _2<\degw ^{S_2'}\phi _2$. 
By Lemma~\ref{lem:degS1}(i), 
there exist $p,q\in \N $ with $\gcd (p,q)=1$ 
for which $((f_1')^{\w })^q\approx (f_3^{\w })^p$ and 
\begin{equation}\label{eq:pfstrdet4}
\begin{aligned}
2\delta =\deg f_2=\degw \phi _2
&\geq pq\gamma +\degw df_1'\wedge df_3-p\gamma -q\gamma \\
&=pq\gamma +\delta +\degw df_2\wedge df_3-p\gamma -q\gamma , 
\end{aligned}
\end{equation}
where $\gamma \in \Gamma $ such that 
$\degw f_1'=p\gamma $ and $\degw f_3=q\gamma $, 
and the last equality is due to (\ref{eq:pfstrdet2}). 
From (\ref{eq:pfstrdet4}), 
it follows that 
$(pq-p-q)\gamma <\delta$. 
Since $\deg f_1'>\delta $ by (\ref{eq:delta bound}), 
and since $\deg f_3=(3/2)\delta >\delta $, 
we have 
$\delta <\min \{ \deg f_1',\deg f_3\} 
=\min \{ p,q\} \gamma $. 
Hence, $pq-p-q<\min \{ p,q\} $. 
By (iii) and (i), 
$f_3^{\w }$ and $(f_1')^{\w }$ do not belong to 
$k[(f_1')^{\w }]$ and $k[f_3^{\w }]$, 
respectively. 
As $\gcd (p,q)=1$, 
we get $2\leq p<q$ or $2\leq q<p$. 
It follows from the claim before Lemma~\ref{lem:degS2} 
that $(p,q)=(2,3)$ or $(p,q)=(3,2)$. 
If $(p,q)=(2,3)$, 
then 
$3\delta <3\degw f_1'=2\degw f_3=3\delta $ 
by (\ref{eq:delta bound}), 
a contradiction. 
Thus, $(p,q)=(3,2)$. 
Then, $\degw f_1'=(3/2)\degw f_3=(9/4)\delta $ 
and $\gamma =(1/2)\degw f_3=(3/4)\delta $, and so 
\begin{equation}\label{eq:513}
\degw df_2\wedge df_3\leq 
2\delta -pq\gamma -\delta +p\gamma +q\gamma 
=2\delta -6\gamma -\delta 
+3\gamma +2\gamma =\frac{1}{4}\delta 
\end{equation}
by (\ref{eq:pfstrdet4}). 
By Lemma~\ref{lem:degS1}(ii) and (\ref{eq:513}), 
we get 
\begin{align*}
\degw (f_2-\phi _2)\geq 
3\degw f_3+\ep -\degw df_2\wedge df_3-\degw f_1'
>\frac{9}{2}\delta -\frac{1}{4}\delta -\frac{9}{4}\delta 
=2\delta .
\end{align*}
However, 
since $\phi _2^{\w }=f_2^{\w }$, 
we have $\degw (f_2-\phi _2)<\degw f_2=2\delta $, 
a contradiction. 
Therefore, 
$f_2^{\w }$ does not belong to $k[S_2']^{\w }$.

Similarly, suppose to the contrary 
that there exists $\phi _3\in k[S_3']$ 
such that $\phi _3^{\w }\approx f_3^{\w }$. 
Then, $\phi _3^{\w }$ does not belong to 
$k[(f_1')^{\w },f_2^{\w }]$ by (iii). 
Hence, 
$\degw \phi _3<\degw ^{S_3'}\phi _3$. 
By (i) and (ii), 
$(f_1')^{\w }$ and $f_2^{\w }$ 
do not belong to $k[f_2^{\w }]$ and $k[(f_1')^{\w }]$, 
respectively. 
Thus, 
$$
\degw df_1'\wedge df_2<\degw \phi _3=\degw f_3=\frac{3}{2}\delta 
$$ 
by Lemma~\ref{lem:degS2}(i). 
This contradicts (\ref{eq:pfstrdet3}). 
Therefore, 
$f_3^{\w }$ does not belong to $k[S_3']^{\w }$. 
This completes the proof of (1).

Finally, we show (2). 
Assume that $(F'_{\sigma },G)$ 
satisfies the quasi Shestakov-Umirbaev condition 
for some $\sigma \in \sym _3$ and $G\in \TT $. 
By (\ref{eq:pfstrdet2}) and (\ref{eq:pfstrdet3}), 
we have 
$$
\degw df_2\wedge df_3<\degw df_1'\wedge df_3
<\degw df_1'\wedge df_2. 
$$
In addition, 
$2\deg f_2=4\delta \neq (3/2)r\delta =r\deg f_3$ 
for any odd number $r\geq 3$. 
Hence, we get 
$\sigma =\id $ and $2\degw g_1=3\degw f_2$ 
by Lemma~\ref{lem:su}(iv). 
Thus, 
$(F',G)$ satisfies the quasi Shestakov-Umirbaev condition, 
and $\degw g_1=(3/2)\degw f_2=\degw f_1$. 
Then, it is immediate that $(F,G)$ satisfies 
(SU$2'$), (SU$3'$), (SU4), (SU5) and (SU6). 
As for (SU$1'$), 
we have only to check that 
$g_1-f_1$ belongs to $k[f_2,f_3]$. 
Since $(F',G)$ satisfies the quasi Shestakov-Umirbaev condition, 
$g_1-f_1'$ belongs to $k[f_2,f_3]$ by (SU$1'$). 
Hence, 
$g_1-f_1=(g_1-f_1')+\phi _1$ belongs to $k[f_2,f_3]$, 
since so does $\phi _1$. 
Thus, 
$(F,G)$ satisfies (SU$1'$). 
Therefore, 
$(F,G)$ satisfies the quasi Shestakov-Umirbaev condition. 
This completes the proof of (2). 
\end{proof}

We note that (\ref{eq:delta bound}) is the key estimation 
which guarantees that no tame automorphism 
admits a reduction of type IV\null.

\section{Proof of Theorem~\ref{thm:main1}}
\label{sect:proof}
\setcounter{equation}{0}

We begin with the following lemma. 

\begin{lemma}\label{lem:|omega|}
{\rm (i)} If $\degw F=|\w |$ for $F\in \Aut _k\kx $, 
then $F$ is tame. 

{\rm (ii)} 
$\Sigma :=\{ a_1w_1+\cdots +a_nw_n\mid a_1,\ldots ,a_n\in \Zn \} $ 
is a well-ordered subset of $\Gamma $. 
\end{lemma}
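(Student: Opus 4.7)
For (ii), I would apply Dickson's lemma: any strictly decreasing sequence $\gamma _1>\gamma _2>\cdots $ in $\Sigma $ comes from exponent vectors $(a_1^{(j)},\ldots ,a_n^{(j)})\in (\Zn )^n$, among which Dickson furnishes indices $j<j'$ with $a_i^{(j)}\leq a_i^{(j')}$ componentwise; positivity of the $w_i$ then forces $\gamma _j\leq \gamma _{j'}$, contradicting strict decrease.

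For (i), the hypothesis $\degw F=|\w |$ makes (\ref{eq:auto deg lower bound}) tight. By the discussion preceding (\ref{eq:auto deg lower bound}) this means $f_1^{\w },\ldots ,f_n^{\w }$ are algebraically independent and $\degw df_1^{\w }\wedge \cdots \wedge df_n^{\w }=|\w |$. Writing $df_1^{\w }\wedge \cdots \wedge df_n^{\w }=J_0\, dx_1\wedge \cdots \wedge dx_n$ with $J_0:=\det (\partial f_i^{\w }/\partial x_j)$, this yields $\degw J_0=0$, and since $w_i>0$ forces $\kx _0=k$, we conclude $J_0\in k^{\times }$. Evaluating at the origin, the matrix $M$ whose $(i,j)$-entry is the coefficient of $x_j$ in $f_i^{\w }$ satisfies $\det M=J_0(0)\neq 0$, while $M_{ij}$ vanishes unless $w_j=\degw f_i$; Leibniz expansion therefore produces a permutation $\sigma $ with $\degw f_i=w_{\sigma (i)}$ for all $i$.

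After composing with the associated permutation automorphism (which is tame), I may assume $\degw f_i=w_i$, so that $M$ is block-diagonal with respect to the partition of $\{1,\ldots ,n\}$ by weight value, each block being invertible. Each $f_i^{\w }$ then splits as a linear form in $\{x_j:w_j=w_i\}$ plus a polynomial involving only variables of strictly smaller weight (no monomial of total degree $\geq 2$ and $\w $-weight $w_i$ can contain a variable of weight $\geq w_i$). Grouping indices into weight blocks $W_1<W_2<\cdots $, I would reduce $F_0:=(f_1^{\w },\ldots ,f_n^{\w })$ to the identity by, block by block, applying an affine move (inverting the diagonal block of $M$) followed by elementary moves eliminating the lower-weight tails; hence $F_0$ is tame. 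Finally, $G:=F_0^{-1}\circ F$ inherits from the $\w $-grade-preserving nature of $F_0$ the form $G(x_i)=x_i+h_i$ with $\degw h_i<w_i$, forcing $h_i\in k[x_j:w_j<w_i]$, and the same block-by-block reduction gives $G$ tame. Therefore $F=F_0\circ G$ is tame.

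The main obstacle is bookkeeping rather than conceptual depth: the Jacobian-at-origin identity delivers $\{\degw f_i\}=\{w_i\}$ as multisets essentially for free, but the subsequent reductions of both $F_0$ and $G$ to the identity must be argued by induction on the weight blocks, since weights may coincide in multi-element blocks and one cannot simply triangulate variable-by-variable.
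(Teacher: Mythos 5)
Your argument is correct, but it reaches both conclusions by routes that differ from the paper's, so a brief comparison is in order. For (ii) the paper does not invoke Dickson's lemma; it embeds the monomials $\y ^{\alpha }$, $\alpha \in \Sigma $, into a finitely generated (hence Noetherian) subalgebra $R$ of the Laurent polynomial ring and extracts the minimum of a subset $S$ from a finite generating set of the ideal generated by $\{ \y ^{\alpha }\mid \alpha \in S\} $; your Dickson argument is the combinatorial core of the same fact and is, if anything, more elementary (modulo the standard use of dependent choice to pass from ``no infinite descending chain'' to ``well-ordered''). For (i) the paper never works with the leading forms $f_i^{\w }$: it normalizes constants away, sorts the $w_i$ and the $\deg _{\w }f_i$, uses invertibility of the linear part $(a_{i,j})$ to force $\deg _{\w }f_i=w_i$, checks that the inverse $H^{-1}$ of the linear part does not raise $\w $-degrees, and then shows $H^{-1}\circ F$ is triangular. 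You instead exploit tightness of (\ref{eq:auto deg lower bound}) to get algebraic independence of the $f_i^{\w }$, observe that the leading Jacobian $J_0$ is a nonzero constant, read off the matching $\{ \deg _{\w }f_i\} =\{ w_i\} $ from a nonvanishing Leibniz term of $\det M$ at the origin, and then factor $F=F_0\circ G$ with $F_0$ the (block-linear times triangular, hence tame) graded automorphism of leading forms and $G$ triangular-unipotent; the only points you gloss over are routine — that the diagonal blocks of $M$ are invertible because $M$ is genuinely block-diagonal with $\det M=J_0\neq 0$, and that the inverse of a graded automorphism is again graded, which is what makes $\deg _{\w }h_i<w_i$ legitimate. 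Your version makes the graded structure explicit and isolates the reusable statement that the leading-form map $F_0$ is itself a tame automorphism, at the price of handling $F_0^{-1}$; the paper's version avoids leading forms and inverses of graded maps altogether but needs the sorting/filtration argument for $H^{-1}$. Both proofs ultimately rest on the same two pillars: the multiset of $\w $-degrees must coincide with $\{ w_1,\ldots ,w_n\} $, and triangular plus affine automorphisms are tame.
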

\begin{proof}\rm
(i) 
We may assume that 
$w_1\leq \cdots \leq w_n$ and 
$\deg f_1\leq \cdots \leq \deg f_n$ 
by changing the indices of 
$w_1,\ldots ,w_n$ and $f_1,\ldots ,f_n$ if necessary. 
Write $f_i=b_i+\sum _{j=1}^na_{i,j}x_j+f_i'$ for each $i$, 
where $b_i,a_{i,j}\in k$ for each $j$, 
and $f_i'$ is an element of the ideal $Q$ 
of $\kx $ generated by all the quadratic monomials. 
Clearly, 
$F$ is tame if and only if so is 
$F\circ G'$ or $G'\circ F$ for some $G'\in \T _k\kx $. 
Since $\deg F\circ G=\deg F$ 
for $G=(x_1-b_1,\ldots ,x_n-b_n)$, 
we may assume that $b_i=0$ for each $i$ 
by replacing $F$ by $F\circ G$. 
Note that $\det (a_{i,j})_{i,j}$ 
is equal to the Jacobian of $F$, 
so $(a_{i,j})_{i,j}$ is invertible. 
Let $H$ be an affine automorphism of $\kx $ defined by 
$H(x_i)=\sum _{j=1}^na_{i,j}x_j$ for each $i$. 
Then, 
$\deg H(x_i)\leq 
\deg f_i$ for each $i$, 
since $f_i=H(x_i)+f_i'$. 
We claim that $\deg f_i=w_i$ for each $i$. 
In fact, if not, we can find $i$ such that $\deg f_i<w_i$, 
since $\deg F=|\w |$ by assumption. 
Then, 
$\deg H(x_j)\leq \deg f_j\leq \deg f_i<w_i$ for $j\leq i$, 
while $\deg x_l=w_l\geq w_i$ 
for $l\geq i$ by assumption. 
Hence, 
$H(x_j)$ is contained in the 
$(i-1)$-dimensional $k$-vector space 
$\bigoplus _{l=1}^{i-1}kx_l$ 
for $j=1,\ldots ,i$. 
This contradicts that 
$H(x_1),\ldots ,H(x_i)$ are linearly independent over $k$. 
Thus, we get $\deg f_i=w_i$, 
and hence $\deg H(x_i)\leq w_i$ for each $i$. 
We show that $\deg H^{-1}(x_i)\leq w_i$ for each $i$. 
Let $m$ be the maximal number for which $w_m=w_i$. 
Then, 
$H(x_j)$ belongs to 
$\bigoplus _{l=1}^mkx_l$ for $j=1,\ldots ,m$. 
Hence, 
$H$ induces an automorphism of 
$\bigoplus _{l=1}^mkx_l$. 
Thus, $H^{-1}(x_i)$ belongs to 
$\bigoplus _{l=1}^mkx_l$. 
Therefore, 
$\deg H^{-1}(x_i)\leq w_m=w_i=\deg x_i$. 
This implies that 
$\deg H^{-1}(g)\leq \deg g$ 
holds for each $g\in \kx $. 
Consequently, 
$$
|\w |\leq \deg H^{-1}\circ F=\sum _{i=1}^n\deg H^{-1}(f_i)
\leq \sum _{i=1}^n\deg f_i=\deg F=|\w |. 
$$
Therefore, 
$\deg H^{-1}\circ F=|\w |$, 
and so we may replace $F$ by $H^{-1}\circ F$. 
It follows that $f_i=x_i+f_i''$ for each $i$, 
where $f_i''=H^{-1}(f_i')\in Q$. 
We show that $f_i''$ belongs to 
$k[x_1,\ldots ,x_{i-1}]$ for every $i$ 
by contradiction. 
Suppose that there appears in $f_i''$ 
a monomial $x_{a_1}\cdots x_{a_n}$, 
where $a_1,\ldots ,a_n\in \{ 1,\ldots ,n\} $ 
with $a_1\geq i$. 
Since $x_{a_1}\cdots x_{a_n}$ belongs to $Q$, 
we have $n\geq 2$. 
Hence, 
$$
w_i=\deg f_i\geq \deg f_i''\geq \deg x_{a_1}\cdots x_{a_n}
=\sum _{i=1}^nw_{a_i}>w_{a_1}\geq w_i, 
$$
a contradiction. 
Thus, 
$f_i''$ belongs to $k[x_1,\ldots ,x_{i-1}]$ for each $i$. 
This means that $F$ is triangular. 
Here, we say that 
$(h_1,\ldots ,h_n)\in \Aut _k\kx $ is {\it triangular} 
if there exists $\sigma \in \sym _n$ such that 
$h_{\sigma (i)}=x_{\sigma (i)}+\phi _i$ for some 
$\phi _i\in k[x_{\sigma (1)},\ldots ,x_{\sigma (i-1)}]$ 
for $i=1,\ldots , n$. 
Since a triangular automorphism is tame, 
we conclude that $F$ is tame. 

(ii) 
We show that each nonempty subset $S$ of $\Sigma $ 
has the minimum element. 
As mentioned, 
we may regard $\Gamma =\Z ^r$ for some $r\in \N $. 
Let $\kyy $ be the Laurent polynomial ring in 
$y_1,\ldots,y_r$ over $k$, 
and $R$ the $k$-subalgebra of $\kyy $ generated by 
$\y ^{w_i}$ for $i=1,\ldots ,n$, 
where $\y ^{\alpha }=y_1^{\alpha _1}\cdots y_r^{\alpha _r}$ 
for each $\alpha =(\alpha _1,\ldots ,\alpha _r)$. 
Then, $R$ is Noetherian, 
and contains $\y ^{\alpha }$ 
for each $\alpha \in \Sigma $. 
Consider the ideal $I$ of $R$ 
generated by $\{ \y ^{\alpha }\mid \alpha \in S\} $. 
Since $R$ is Noetherian, 
there exists a finite subset $S'$ of $S$ 
with minimum element $\mu $ 
such that $I$ is generated by 
$\{ \y ^{\alpha }\mid \alpha \in S'\} $. 
Then, 
$\mu $ becomes the minimum element of $S$. 
In fact, 
for each $\alpha \in S$, 
there exist $\beta \in S'$ and $\gamma \in \Sigma $ 
such that $\y ^{\alpha }=\y ^{\beta }\y ^{\gamma }$. 
Then, $\beta \geq \mu $, $\gamma \geq 0$ 
and $\alpha =\beta +\gamma $. 
Hence, 
$\alpha \geq \beta \geq \mu $. 
Thus, $\mu $ is the minimum element of $S$. 
Therefore, 
$\Sigma $ is a well-ordered subset of $\Gamma $. 
\end{proof}

In the rest of the paper, 
we assume that $n=3$, 
and identify $\ky $ with $\kx $. 
Let ${\cal A}$ be the set of $F\in \Aut _k\kx $ 
for which there exists $G_i\in \Aut _k\kx $ for $i=1,\ldots ,l$ 
with $G_1=F$ and $\degw G_l=|\w |$ such that 
$G_{i+1}$ is an elementary reduction 
or a quasi Shestakov-Umirbaev reduction 
of $G_i$ for $i=1,\ldots ,l-1$, 
where $l\in \N $. 
Then, each element of $\A $ is tame, 
since $G_l$ is tame if $\deg G_l=|\w |$ by Lemma~\ref{lem:|omega|}(i), 
and $G_{i}$ is tame if and only if so is $G_{i+1}$ for each $i$. 
Hence, 
$\A $ is contained in $\T _k\kx $. 
By definition, 
if $\deg F>|\w |$ for $F\in \A $, 
then $F$ admits 
an elementary reduction 
or a quasi Shestakov-Umirbaev reduction. 
By Proposition~\ref{prop:equivalence}(ii), 
$F$ admits a quasi Shestakov-Umirbaev reduction 
if and only if 
$F$ admits a Shestakov-Umirbaev reduction. 
Thus, 
if $\deg F>|\w |$ for $F\in \A $, 
then $F$ admits 
an elementary reduction 
or a Shestakov-Umirbaev reduction. 
The goal of this section is to establish that $\A =\T _k\kx $, 
which implies Theorem~\ref{thm:main1} immediately.

We remark that, 
if $F$ belongs to $\A $, 
then so do $F_{\sigma }$ and $F\circ H$, 
where $\sigma \in \sym _3$ and 
$H=(c_1x_1,c_2x_2,c_3x_3)$ 
with $c_1,c_2,c_3\in k\sm \zs $. 
If $\deg F=|\w |$ 
or if there exists $G\in \A $ such that 
$G$ is an elementary reduction 
or a quasi Shestakov-Umirbaev reduction of $F$, 
then $F$ belongs to $\A $.

The following is a key proposition.

\begin{proposition}\label{prop:key}
If $\degw F\circ E\leq \degw F$ 
for $F\in \A $ and $E\in \E $, 
then $F\circ E$ belongs to $\A $. 
\end{proposition}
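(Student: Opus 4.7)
The plan is to prove Proposition~\ref{prop:key} by induction on $\degw F$, which is well-founded because $\degw F$ lies in the set $\Sigma $ of Lemma~\ref{lem:|omega|}(ii), which is well-ordered. For the base case $\degw F = |\w |$, the lower bound (\ref{eq:auto deg lower bound}) applied to $F \circ E$ gives $\degw F \circ E \geq |\w |$, while the hypothesis gives $\degw F \circ E \leq |\w |$; hence $\degw F \circ E = |\w |$, so $F \circ E \in \A $ directly from the definition of $\A $.

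For the inductive step, assume $\degw F > |\w |$. By the definition of $\A $ and by Proposition~\ref{prop:equivalence}(ii), $F$ admits either an elementary reduction or a quasi Shestakov-Umirbaev reduction to some $G \in \A $ with $\degw G < \degw F$. In the quasi Shestakov-Umirbaev case, $(F_{\sigma }, G_{\sigma })$ satisfies the quasi Shestakov-Umirbaev condition for some $\sigma \in \sym _3$. Writing $E \in \E _j$, I apply Proposition~\ref{prop:c1-c6} to $(F_{\sigma }, G_{\sigma })$ and the elementary $E' \in \E _{\sigma ^{-1}(j)}$ chosen so that $F_{\sigma } \circ E' = (F \circ E)_{\sigma }$. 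In case (i), the pair $((F \circ E)_{\sigma }, G_{\sigma })$ still satisfies the quasi Shestakov-Umirbaev condition, so $F \circ E$ admits $G \in \A $ as a quasi Shestakov-Umirbaev reduction, hence $F \circ E \in \A $. In case (ii), the same conclusion holds after replacing $G$ by a diagonal scaling $G \circ H$ and possibly composing $F \circ E$ with a transposition in $\sym _3$; since $\A $ is closed under reindexing and diagonal scaling by the remark preceding Proposition~\ref{prop:key}, one again gets $F \circ E \in \A $.

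Suppose instead $G = F \circ E_0$ is an elementary reduction with $E_0 \in \E _i$. If $E \in \E _i$ as well, then $E_0^{-1} \circ E \in \E _i \subset \E $ and $F \circ E = G \circ (E_0^{-1} \circ E)$. Either $\degw G < \degw F \circ E$, so $G \in \A $ is an elementary reduction of $F \circ E$ and $F \circ E \in \A $; or $\degw F \circ E \leq \degw G < \degw F$, and the inductive hypothesis applied to $G \in \A $ and $E_0^{-1} \circ E$ yields $F \circ E = G \circ (E_0^{-1} \circ E) \in \A $.

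The genuinely hard subcase is $E \in \E _j$ with $j \neq i$, since then neither $E_0^{-1} \circ E$ nor $E \circ E_0^{-1}$ is elementary. Here the plan is to invoke the structural analysis of Section~\ref{sect:heart}, in particular Proposition~\ref{prop:structure3}: the element $\phi _1 := F(E_0(y_i) - y_i) \in k[S_i]$ satisfies $\degw (f_i + \phi _1) < \degw f_i$, which matches the hypothesis of that proposition (after reindexing to put the reduced slot in position $1$). Depending on whether $(f_i + \phi _1)^{\w }$ lies in $k[S_i]^{\w }$, Proposition~\ref{prop:structure3} either forces a rigid leading-form identity of the type $f_i^{\w } \approx (f_k^{\w })^2$ for some $k \in \{1,2,3\} \setminus \{i\}$, or else excludes further Shestakov-Umirbaev reductions of the partially reduced $F \circ E_0$. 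The main obstacle is to track how the elementary $E$ acting in the $j$-th slot interacts with this rigid structure on the $i$-th slot: one must combine Property~(P12) governing $\degw df_p \wedge df_q$ with the generalized Shestakov-Umirbaev inequality (Theorem~\ref{thm:inequality}) applied to $F \circ E$, and then use Proposition~\ref{prop:structure3}(2) to transfer the (quasi) Shestakov-Umirbaev reduction data from the configuration around $F$ to one around $F \circ E$, thereby exhibiting a reduction of $F \circ E$ that lies in $\A $ by the inductive hypothesis.
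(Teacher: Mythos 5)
Your base case and the two easy branches (same-slot elementary reductions, and the quasi-Shestakov-Umirbaev branch via Proposition~\ref{prop:c1-c6}) follow the paper, but the proof as a whole has a genuine gap: the case you yourself call ``the genuinely hard subcase'' ($G=F\circ E_0$ an elementary reduction in slot $i$, $E\in\E_j$ with $j\neq i$) is only a plan, and the route you sketch does not go through as stated. Proposition~\ref{prop:structure3} cannot be invoked directly for an arbitrary $F\in\A$ admitting an elementary reduction: its hypotheses require the specific degree pattern (\ref{eq:ana condition}) ($\degw f_1=s\delta$, $\degw f_2=2\delta$, $(s-2)\delta<\degw f_3<s\delta$ with $s$ odd) together with the bound (\ref{eq:ppfstr2}) on $\degw df_1\wedge df_2$, and none of these is available a priori. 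Establishing them, or excluding the remaining configurations, is precisely the content of the paper's Claims~\ref{claim:1}--\ref{claim:4} and the case split (1)--(5), (I)--(III). Moreover, the paper does not prove Proposition~\ref{prop:key} by induction on $\degw F$ alone: it runs a \emph{simultaneous} induction with Proposition~\ref{prop:key2}, and that second statement is indispensable in the hard subcase --- in cases (I) and (II) the inequality (\ref{eq:proofineq1}) (hence the quasi-Shestakov-Umirbaev data for $(F',G')$) is obtained by applying Proposition~\ref{prop:key2} to $G'=(f_j,f_i,g_3)$ of degree $<\mu$, and in case (III) Proposition~\ref{prop:key2} is applied to $F_\tau$ at degree $\mu$ itself to show $1\in I_F$. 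Your proposal never states, proves, or uses Proposition~\ref{prop:key2}, so the inductive machinery needed to close the hard case is missing.

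There is also a smaller but real defect in your quasi-Shestakov-Umirbaev branch: Proposition~\ref{prop:c1-c6} does not cover the case where (after the permutation $\sigma$) the elementary $E$ acts on the third slot and $k[f_1,f_2]=k[g_1,g_2]$; both (i) and (ii) assume $k[f_1,f_2]\neq k[g_1,g_2]$ there. The paper avoids this by first reducing to $I_F=\emptyset$, in which case $F$ admits no elementary reduction and hence $(f_1,f_2)\neq(g_1,g_2)$ and $k[f_1,f_2]\neq k[g_1,g_2]$ by (SU1); when instead $I_F\neq\emptyset$ it argues entirely differently. Your dichotomy ``quasi-SU reduction vs.\ elementary reduction'' (chosen according to which reduction witnesses $F\in\A$) does not make this separation, so in the excluded configuration your argument has no applicable case. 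To repair the proof you would need to import the paper's structure: prove Propositions~\ref{prop:key} and~\ref{prop:key2} together by induction, and in the hard subcase carry out the degree/leading-form analysis (Claims~\ref{claim:2}--\ref{claim:4}, Lemmas~\ref{lem:degS1} and~\ref{lem:degS2}) before any appeal to Propositions~\ref{prop:structure1}--\ref{prop:structure3}.
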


Note that, 
if $\degw F\circ E>\degw F$ for $F\in \A $ 
and $E\in \E $, 
then $F\circ E$ belongs to $\A $. 
Actually, 
$(F\circ E)\circ E^{-1}=F$ 
is an elementary reduction of $F\circ E$.

We deduce from Proposition~\ref{prop:key} 
that $\T _k\kx $ is contained in ${\cal A}$. 
Take any $F\in \T _k\kx $. 
Then, 
we can express $F=H\circ E_1\circ \cdots \circ E_l$, 
where $H=(c_1x_1,c_2x_2,c_3x_3)$ 
with $c_1,c_2,c_3\in k\sm \zs $, 
$l\in \Zn $, and $E_i\in \E $ for $i=1,\ldots ,l$. 
We show that $F$ belongs to ${\cal A}$ by induction on $l$. 
The assertion is true if $l=0$, i.e., $F=H$, 
since $\degw H=|\w |$. 
Assume that $l>0$. 
By induction assumption, 
$F':=H\circ E_1\circ \cdots \circ E_{l-1}$ 
belongs to ${\cal A}$. 
Then, 
$F=F'\circ E_l$ belongs to ${\cal A}$ by 
Proposition~\ref{prop:key} and the note following it. 
Therefore, $\T _k\kx $ is contained in ${\cal A}$ 
on the assumption that Proposition~\ref{prop:key} is true.

The following proposition is necessary to prove 
Proposition~\ref{prop:key}.

\begin{proposition}\label{prop:key2}
Assume that $F=(f_1,f_2,f_3)\in {\cal A}$ 
satisfies 
\begin{equation}\label{eq:key2}
\degw f_1=s\delta ,\ \degw f_2=2\delta ,\ 
(s-2)\delta +\degw df_1\wedge df_2\leq \degw f_3<s\delta 
\end{equation}
for some odd number $s\geq 3$ and $\delta \in \Gamma $, 
and that $f_3^{\w }$ does not belong to $k[f_2^{\w }]$. 
Then, 
there exists $E\in \E _3$ such that $\degw F\circ E<\degw F$ 
and $F\circ E$ belongs to ${\cal A}$. 
\end{proposition}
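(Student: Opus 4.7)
The overall strategy is to use the hypothesis $F\in \A$ to extract a reduction $G\in \A$ of $F$, and to show that, under the constraints (\ref{eq:key2}), this reduction is effectively an $\E_3$-elementary reduction up to degree-preserving tame modifications.

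\smallskip

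First I verify $\degw F>|\w|$: the pattern $\degw f_1=s\delta$ with $s\geq 3$, $\degw f_2=2\delta$, together with $\degw df_1\wedge df_2>0$, is incompatible with the affine description of the $\degw F=|\w|$ case provided by Lemma~\ref{lem:|omega|}(i). Hence, by the definition of $\A$, there exists $G\in \A$ with $\degw G<\degw F$ that is either an elementary or a quasi-Shestakov-Umirbaev reduction of $F$. If $G=F\circ E_0$ with $E_0\in \E_3$, set $E=E_0$ and conclude. If $G=F\circ E_0$ with $E_0\in \E_1\cup \E_2$, I apply Proposition~\ref{prop:structure3} (whose hypothesis (\ref{eq:ppfstr2}) is implied by (\ref{eq:key2}) since $\delta>0$ and $\ep>0$), combined with Proposition~\ref{prop:c1-c6}, to replace this elementary reduction by a quasi-SU reduction covered by the analysis below, possibly after one additional $\E_1$- or $\E_2$-step.

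\smallskip

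In the core case, $(F_\sigma,G)$ satisfies the quasi-Shestakov-Umirbaev condition for some $\sigma\in \sym_3$. Since $\degw f_i<\degw f_1$ for $i=2,3$ by (\ref{eq:key2}), Lemma~\ref{lem:su}(i) forces $\sigma(1)=1$, so $\sigma=\id$ or $\sigma=(2,3)$. The case $\sigma=(2,3)$ is ruled out by combining (P1), (P3), (SU$2'$) with the narrow range $(s-2)\delta<\degw f_3<s\delta$ of (\ref{eq:key2}) and the hypothesis $f_3^{\w}\notin k[f_2^{\w}]$. In the principal case $\sigma=\id$, property (P11) gives $g_1=f_1+af_3^2+cf_3+\psi$ and $g_2=f_2+bf_3+d$ with $\psi\in k[f_2]$, and the key calculation is to test each case of the formula for $\degw df_1\wedge df_2$ from (P12) against the inequality $\degw df_1\wedge df_2\leq \degw f_3-(s-2)\delta$ in (\ref{eq:key2}), using also the equality $\degw df_1\wedge df_3=(s-2)\delta+\degw df_2\wedge df_3$ and the lower bound $\degw df_2\wedge df_3\geq s\delta+\degw dg_1\wedge dg_2$ from (P12). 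Each of $a\neq 0$, $a=0\neq b$, and $a=b=0\neq c$ forces the positive quantity $\degw dg_1\wedge dg_2$ to be bounded above by a negative integer multiple of $\delta$, a contradiction. Therefore $a=b=c=0$, whence $g_1=f_1+\psi(f_2)$, $g_2=f_2+d$, and $k[g_1,g_2]=k[f_1,f_2]$. The SU-relation $g_3-f_3\in k[g_1,g_2]=k[f_1,f_2]$ from (SU$1'$), combined with $\degw g_3<\degw f_3$ from (SU5), yields the desired $E\in \E_3$ with $F\circ E=(f_1,f_2,g_3)$ of $\w$-degree $\degw G<\degw F$.

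\smallskip

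The principal obstacle is showing $F\circ E\in \A$: since $F\circ E$ and $G$ differ by the degree-preserving elementary operations $y_2\mapsto y_2+d$ and $y_1\mapsto y_1+\Psi(y_2)$ where $\Psi(f_2)=\psi$, propagating $\A$-membership from $G$ to $F\circ E$ through these steps is exactly what Proposition~\ref{prop:key} asserts. To avoid circularity, I would set the argument up as a simultaneous induction on $\degw F$ over the well-ordered set $\Sigma$ of Lemma~\ref{lem:|omega|}(ii); the strict inequality $\degw G<\degw F$ makes Proposition~\ref{prop:key} available at $G$ by the induction hypothesis, completing the transfer.
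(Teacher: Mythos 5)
Your treatment of the core case and of the final transfer of $\A$-membership is essentially the paper's argument: the paper packages your (P11)/(P12) computation as Claim~\ref{claim:1}(v) plus Proposition~\ref{prop:structure1} (i.e.\ Lemma~\ref{lem:su}(ii)), but forcing $a=b=c=0$ directly from (P12) against the bound $\degw df_1\wedge df_2<2\delta $ coming from (\ref{eq:key2}) is fine, and the simultaneous induction on $\degw F$ over the well-ordered $\Sigma $ is exactly how the paper avoids circularity. The genuine gap is the branch in which the reduction furnished by $F\in \A $ is an elementary reduction in $\E _1\cup \E _2$. For $E_0\in \E _2$ the tools you cite do not apply: Proposition~\ref{prop:structure3} is about a $\phi _1\in k[S_1]$ lowering $\degw f_1$, not about a reduction of $f_2$, and Proposition~\ref{prop:c1-c6} cannot be invoked because its hypothesis is that $(F,G)$ \emph{already} satisfies the quasi Shestakov-Umirbaev condition, which is precisely what you lack in this branch. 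The paper instead uses Proposition~\ref{prop:structure2} to get $f_1^{\w }\approx (f_3^{\w })^2$, and then Claim~\ref{claim:1}(iii)--(iv) --- triangular-automorphism manipulations whose justification is the induction hypothesis of Proposition~\ref{prop:key} at degrees $<\degw F$ --- to convert $2\in I_F$ into $1\in I_F$; only then can Proposition~\ref{prop:structure3} enter. Even in the $\E _1$ branch your sketch omits two necessary steps: you must first replace $E_0$ so that $(f_1')^{\w }\notin k[S_1]^{\w }$ while keeping $F'=F\circ E_0\in \A $ (Claim~\ref{claim:1}(i), again the induction hypothesis), and you must then argue that $F'$ admits a quasi Shestakov-Umirbaev reduction at all: the paper gets this from $F'\in \A $, $\degw F'>|\w |$ (algebraic dependence of $f_2^{\w }$ and $f_3^{\w }$), and the fact that $F'$ admits no elementary reduction by Proposition~\ref{prop:structure3}(1), which kills the $(f_2^{\w })^2\approx (f_3^{\w })^3$ alternative and lets Proposition~\ref{prop:structure3}(2) deliver the quasi Shestakov-Umirbaev pair $(F,G')$. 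The phrase ``possibly after one additional $\E _1$- or $\E _2$-step'' does not substitute for this chain.

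Two smaller repairs. Your stated reasons for excluding $\sigma =(2,3)$ --- (P1), (P3), (SU$2'$), the range of $\degw f_3$ and $f_3^{\w }\notin k[f_2^{\w }]$ --- are not sufficient: one can write down degree data satisfying all of them with $\sigma =(2,3)$. The exclusion needs the wedge-degree estimates of (P12) applied to $F_{\sigma }$: they give
$\degw df_1\wedge df_2=\degw df_{\sigma (1)}\wedge df_{\sigma (3)}>\degw g_1\geq \degw f_1=s\delta >2\delta $,
contradicting $\degw df_1\wedge df_2<2\delta $ from (\ref{eq:key2}); this is precisely the mechanism of Lemma~\ref{lem:su}(ii). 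Finally, your appeal to Lemma~\ref{lem:|omega|}(i) to get $\degw F>|\w |$ is off target (that lemma asserts tameness, not a contradiction); the correct reason is that (\ref{eq:key2}) forces $f_1^{\w }$ and $f_2^{\w }$ to be algebraically dependent, whereas $\degw F=|\w |$ would force $f_1^{\w },f_2^{\w },f_3^{\w }$ to be algebraically independent.
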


We note that (\ref{eq:key2}) implies 
(\ref{eq:structure1}), (\ref{eq:ana condition}), 
(\ref{eq:pfstr}) and (\ref{eq:ppfstr2}). 
Furthermore, 
$f_1^{\w }$ and $f_2^{\w }$ 
are algebraically dependent over $k$ 
in this situation, 
for otherwise 
$$
\degw df_1\wedge df_2=\degw f_1+\degw f_2=(s+2)\delta 
$$ 
as mentioned after (\ref{eq:ineq-wedge}), 
which contradicts the last inequality of (\ref{eq:key2}).

We establish Propositions~\ref{prop:key} and~\ref{prop:key2} 
simultaneously by induction on $\degw F$. 	
Since $\Sigma $ is well-ordered by Lemma~\ref{lem:|omega|}(ii), 
so is the subset $\Delta :=\{ \degw H\mid H\in {\cal A}\} $, 
where $\min \Delta =|\w |$. 
Assume that $F\in \A $ satisfies $\degw F=|\w |$. 
If $\deg F\circ E\leq \deg F$ for $E\in \E $, 
then $\deg F\circ E=|\w |$, 
since $\deg F\circ E\geq |\w |$ by (\ref{eq:auto deg lower bound}). 
Hence, $F\circ E$ belongs to $\A $. 
Thus, 
the statement of Proposition~\ref{prop:key} 
holds for $F\in {\cal A}$ with $\deg F=|\w |$. 
Note that $f_1^{\w }$, $f_2^{\w }$ and $f_3^{\w }$ 
are algebraically independent over $k$ if $\deg F=|\w |$, 
for otherwise 
$\degw df_1\wedge df_2\wedge df_3<\sum _{i=1}^3\degw f_i=|\w |$, 
a contradiction. 
Therefore, 
the assumption of Proposition~\ref{prop:key2} is not fulfilled.

Let $\mu $ be an element of $\Delta $ such that $\mu >|\w |$, 
and assume that the statement of Proposition~\ref{prop:key} 
holds for each $F\in \A $ with $\degw F<\mu $. 
For $F\in \Aut _k\kx $, 
we define $I_F$ to be the set of 
$i\in \{ 1,2,3\} $ for which there exists $E\in \E _i$ 
such that $\degw F\circ E<\degw F$ 
and $F\circ E$ belongs to ${\cal A}$. 
Note that, if $\deg F>|\w |$ for $F\in \A $, 
then either $I_F\neq \emptyset $, 
or $(F_{\sigma },G)$ satisfies the quasi Shestakov-Umirbaev condition 
for some $\sigma \in \sym _3$ and $G\in \A $.

\begin{claim}\label{claim:1}
Let $F$ be an element of $\A $ such that $\degw F=\mu $. 

{\rm (i)} If $E$ is an element of $\E _i$ for some $i\in I_F$, 
then $F\circ E$ belongs to ${\cal A}$.

{\rm (ii)} 
If there exist $E',E''\in \E $ and 
$E_i\in \E _i$ with  $\degw F\circ E_i<\degw F$ 
for some $i\in I_F$ such that 
$E\circ E'=E_i\circ E''$ for $E\in \E $, 
then $F\circ E$ belongs to ${\cal A}$.

{\rm (iii)} 
For a triangular automorphism $H$ of $\kx $, 
we define $E_i\in \E _i$ by $E_i(x_i)=H(x_i)$ for each $i$. 
If $\deg (F\circ H)(x_i)<\deg f_i$, 
or equivalently $\degw F\circ E_i<\degw F$, 
for some $i\in I_F$, 
then $F\circ E_j$ belongs to $\A $ for $j=1,2,3$.

{\rm (iv)} 
If $I_F\sm \{ i\} \neq \emptyset $ and 
$f_j^{\w }$ belongs to $k[f_i^{\w }]$ 
for some $i,j\in \{ 1,2,3\} $ with $i\neq j$, 
then $j$ belongs to $I_F$. 

{\rm (v)} If $(F,G)$ satisfies the quasi Shestakov-Umirbaev condition 
for some $G\in \A $, 
then there exists $G'\in \A $ such that 
$(F,G')$ satisfies the Shestakov-Umirbaev condition. 
\end{claim}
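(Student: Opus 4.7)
The plan is to prove (i)--(v) sequentially, using the simultaneous induction on $\mu = \degw F$: for every $F' \in \A$ with $\degw F' < \mu$ I may assume Proposition~\ref{prop:key}. The working principle used throughout is that $F' \in \A$ with $\degw F' < \mu$ and any $E \in \E$ together imply $F' \circ E \in \A$: if the step is degree-non-increasing this is Proposition~\ref{prop:key}, and if degree-increasing it is the remark following it (with $(F' \circ E) \circ E^{-1} = F'$ serving as the downward elementary reduction witnessing $F' \circ E \in \A$).

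Part (i) follows immediately from the working principle. Fix a witness $E_0 \in \E_i$ to $i \in I_F$, so $F \circ E_0 \in \A$ with $\degw(F \circ E_0) < \mu$. Since $\E_i$ is a subgroup of $\Aut_k \ky$, the element $E_0^{-1} \circ E$ also lies in $\E_i \subseteq \E$; writing $F \circ E = (F \circ E_0) \circ (E_0^{-1} \circ E)$ and applying the working principle to $F \circ E_0$ yields $F \circ E \in \A$.

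Part (ii) is the technical heart. By (i), $F \circ E_i \in \A$ with $\degw < \mu$; the working principle with $E''$ gives $F \circ E_i \circ E'' = F \circ E \circ E' \in \A$, and the target $F \circ E = (F \circ E \circ E') \circ (E')^{-1}$ follows by a second appeal to the working principle whenever $\degw(F \circ E \circ E') < \mu$ or $(E')^{-1}$ raises the degree. The delicate case, where $\degw(F \circ E \circ E') \geq \mu$ and $(E')^{-1}$ strictly lowers it, does not yield to a second induction; instead one constructs an explicit reduction chain for $F \circ E$ by grafting onto the chain known for $F \circ E_i \in \A$, exploiting that the factorisation $E \circ E' = E_i \circ E''$ forces $F \circ E \circ E'$ to reduce directly to $F \circ E_i$ via $(E'')^{-1}$. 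This is the main obstacle of the claim.

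For (iii), the triangular $H$ decomposes as $E_{\sigma(3)} \circ E_{\sigma(2)} \circ E_{\sigma(1)}$ for the triangularising permutation $\sigma$, where the factors coincide with the $E_j$'s in the statement; the hypothesis $\degw(F \circ H)(x_i) < \degw f_i$ is exactly $\degw F \circ E_i < \degw F$, and the factorisations $E_j \circ E' = E_i \circ E''$ needed to invoke (ii) for each $j \neq i$ can be read off from this product. For (iv), $f_j^{\w} \in k[f_i^{\w}]$ together with $\degw f_j > 0$ forces $f_j^{\w} \approx c (f_i^{\w})^p$ for some $c \in k$, $p \in \N$, so $E_j \in \E_j$ defined by $E_j(y_j) = y_j - c y_i^p$ is degree-reducing; taking $E_{i'}$ to be a witness to $i' \in I_F \setminus \{i\}$ and setting $E' = E_j^{-1} \circ E_{i'} \circ E_j$, a short direct computation confirms $E' \in \E_{i'}$ and $E_j \circ E' = E_{i'} \circ E_j$, so (ii) yields $F \circ E_j \in \A$, i.e., $j \in I_F$. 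For (v), Proposition~\ref{prop:equivalence}(i) refines the quasi Shestakov-Umirbaev condition for $(F,G)$ into the full Shestakov-Umirbaev condition for $(F, G')$ with $G' = G \circ E_1 \circ E_2$; property (P6) gives $\degw G < \mu$, so the working principle applied to $G$ successively with $E_1$ (preserving degree and so keeping things $< \mu$) and then with $E_2$ produces $G' \in \A$.
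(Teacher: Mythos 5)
Your parts (i), (iii), (iv) and (v) are essentially in order: (i) and (v) coincide with the paper's argument, (iii) is the paper's commutation trick for triangular automorphisms in sketchier form, and in (iv) your conjugation $E'=E_j^{-1}\circ E_{i'}\circ E_j$ is a legitimate shortcut that feeds the required factorisation $E_j\circ E'=E_{i'}\circ E_j$ straight into (ii), where the paper instead detours through (iii) with a triangular automorphism built from the witness $E_{i'}$ and the degree-killing $E_j$ (you should still note, as the paper does, that one may assume the witness index is different from $j$). The genuine problem is (ii), which you yourself call the main obstacle and then leave unresolved. The ``delicate case'' you describe, namely $\degw F\circ E\circ E'\geq \mu$ with $(E')^{-1}$ lowering the degree, does not require any new construction: it simply never occurs once the situation is normalised. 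By (i) one may assume $E\in \E_j$ with $j\neq i$, and by the note after Proposition~\ref{prop:key} one may assume $\degw F\circ E\leq \degw F$ (otherwise $F\in \A$ is itself an elementary reduction of $F\circ E$ and we are done). Under these reductions $E'\in \E_i$ and $E''\in \E_j$, so $(E_i\circ E'')(x_i)=E_i(x_i)$, $(E_i\circ E'')(x_j)=(E\circ E')(x_j)=E(x_j)$, and $(E_i\circ E'')(x_l)=x_l$ for the remaining index; hence the components of $F\circ E_i\circ E''$ have $\w$-degrees $<\degw f_i$, $\leq \degw f_j$ and $=\degw f_l$ respectively, which gives $\degw F\circ E\circ E'=\degw F\circ E_i\circ E''<\mu$. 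With this estimate two applications of your working principle (to $F\circ E_i$ with $E''$, then to $F\circ E_i\circ E''$ with $(E')^{-1}$) finish the proof; this componentwise degree computation is the actual content of the paper's proof of (ii) and is missing from your proposal.

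Moreover, the ``grafting'' fallback you sketch would not close the gap even if the bad case were real. Observing that $F\circ E\circ E'$ reduces to $F\circ E_i$ via $(E'')^{-1}$ only re-establishes that $F\circ E\circ E'$ belongs to $\A$, which was already known; membership of $F\circ E$ in $\A$ requires a chain of elementary or quasi Shestakov-Umirbaev reductions starting at $F\circ E$ itself, and when $\degw F\circ E\leq \degw F\circ E\circ E'$ the element $F\circ E\circ E'$ is not a reduction of $F\circ E$, so your argument produces no first step of such a chain.
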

\begin{proof}\rm
(i) Since $i$ is an element of $I_F$, 
there exists $E_i\in \E _i$ such that 
$\degw F\circ E_i<\degw F$ and $F\circ E_i$ 
belongs to ${\cal A}$. 
Then, 
we have $\deg F\circ E_i<\mu $, 
since $\deg F=\mu $ by assumption. 
For each $E\in \E _i$, 
it follows that $E':=E_i^{-1}\circ E$ 
is an element of $\E _i$. 
Hence, 
$F\circ E=(F\circ E_i)\circ E'$ belongs to ${\cal A}$ 
by the induction assumption of Proposition~\ref{prop:key}.

(ii) We may assume that 
$E$ is contained in $\E _j$ for some $j\neq i$ by (i), 
and $\degw F\circ E\leq \degw F$ 
by the note after Proposition~\ref{prop:key}. 
Then, 
$E'$ and $E''$ belong to $\E _i$ and $\E _j$, 
respectively, 
since $E\circ E'=E_i\circ E''$ by assumption. 
Hence, 
$(E_i\circ E'')(x_j)=(E\circ E')(x_j)=E(x_j)$, 
and $(E_i\circ E'')(x_l)=E_i(x_l)$ for $l\neq j$. 
Since $\degw F\circ E_i<\degw F$ and 
$\degw F\circ E\leq \degw F$, 
we have 
$$
\degw (F\circ E_i\circ E'')(x_l)=
\left\{ 
\begin{array}{ll}
\degw (F\circ E_i)(x_i)<\degw f_i& \text{ if }l=i\\
\degw (F\circ E)(x_j)\leq \degw f_j& \text{ if }l=j\\
\degw (F\circ E_i)(x_l)=\degw f_l& \text{ otherwise.}
\end{array}
\right.
$$
Thus, 
$\degw F\circ E_i\circ E''<\degw F$. 
Note that $F\circ E_i\circ E''$ belongs to $\A $ 
by the induction assumption of Proposition~\ref{prop:key}, 
since $\degw F\circ E_i<\degw F=\mu $, 
and $F\circ E_i$ belongs to $\A $ by (i). 
Therefore, 
$(F\circ E_i\circ E'')\circ (E')^{-1}$ belongs to $\A $ 
for the same reason. 
This shows that 
$F\circ E$ belongs to $\A $, since 
$F\circ E_i\circ E''\circ (E')^{-1}
=F\circ E\circ E'\circ (E')^{-1}=F\circ E$.

(iii) Without loss of generality, 
we may assume that $i\neq j$ by (i). 
We may also assume that 
$H(x_l)=x_l+\phi _l$ for each $l$, 
where $\phi _l\in k[x_1,\ldots ,x_{l-1}]$. 
Then, 
$E_p\circ E'=E_q\circ E_p$ holds for each $p<q$, 
where 
$E'\in \E _q$ such that 
$E'(x_q)=x_q+E_p^{-1}(\phi _q)$. 
In view of this, 
we can find $E',E''\in \E $ 
such that $E_j\circ E'=E_i\circ E''$. 
By assumption, 
$\deg F\circ E_i<\deg F$, 
and $i$ is an element of $I_F$. 
Hence, we conclude that 
$F\circ E_j$ belongs to $\A $ by (ii).

(iv) 
Since $I_F\sm \{ i\} \neq \emptyset $ by assumption, 
we can find $l\in I_F\sm \{ i\} $ 
and $E_l\in \E _l$ such that $\degw F\circ E_l<\degw F$. 
Clearly, we may assume that $j\neq l$. 
Since $f_j^{\w }$ belongs to $k[f_i^{\w }]$ by assumption, 
there exist $c\in k\sm \zs $ and $r\in \N $ such that 
$f_j^{\w }=c(f_i^{\w })^r$. 
Then, 
we can define a triangular automorphism $H$ of $\kx $ by 
$H(x_i)=x_i$, $H(x_j)=x_j-cx_i^r$ 
and $H(x_l)=E_l(x_l)$. 
Define $E_j\in {\cal E}_j$ by $E_j(x_j)=H(x_j)$. 
Since $\degw F\circ E_l<\degw F$ for $l\in I_F$, 
it follows from (iii) that $F\circ E_j$ belongs to $\A $. 
Moreover, 
since $\degw (f_j-cf_i^r)<\degw f_j$, 
we have $\degw F\circ E_j<\degw F$. 
Therefore, 
$j$ belongs to $I_F$.

(v) 
Since $(F,G)$ satisfies the Shestakov-Umirbaev condition 
by assumption, 
there exists $E_i\in \E _i$ for $i=1,2$ 
such that $\degw G\circ E_1=\degw G$, 
and $(F,G')$ satisfies the Shestakov-Umirbaev condition 
by Proposition~\ref{prop:equivalence}(i), 
where $G'=G\circ E_1\circ E_2$. 
We show that $G'$ belongs to $\A $. 
Since $G$ is an element of $\A $, 
and since $\deg G<\deg F=\mu $ by (P6), 
it follows that $G\circ E_1$ belongs to $\A $ 
by the induction assumption of Proposition~\ref{prop:key}. 
Then, 
$(G\circ E_1)\circ E_2$ belongs to ${\cal A}$ 
for the same reason, 
since $\degw G\circ E_1=\degw G<\mu$. 
Therefore, 
the assertion holds for $G'=G\circ E_1\circ E_2$. 
\end{proof}

Now, 
we show that the statement of Proposition~\ref{prop:key2} 
holds for each $F\in \A $ with $\degw F=\mu $. 
Since $\mu >|\w |$, 
we have $\degw F>|\w |$. 
Hence, 
$I_F\neq \emptyset $ 
or $(F_{\sigma },G)$ satisfies 
the quasi Shestakov-Umirbaev condition 
for some $\sigma \in \sym _3$ and $G\in \A $ as noted. 
The conclusion of Proposition~\ref{prop:key2} 
is obvious if $I_F$ contains $3$. 
If $I_F$ contains $2$, 
then $\deg F\circ E_2<\deg F$ for some $E_2\in {\cal E}_2$. 
Hence, 
$f_2^{\w }$ belongs to $k[S_2]^{\w }$. 
Then, we get 
$f_1^{\w }\approx (f_3^{\w })^2$ 
by Proposition~\ref{prop:structure2}. 
Here, 
we remind that the assumption 
of Proposition~\ref{prop:key2} implies 
(\ref{eq:structure1}), (\ref{eq:ana condition}), 
(\ref{eq:pfstr}) and (\ref{eq:ppfstr2}). 
Thus, 
$f_1^{\w }$ belongs to $k[f_3^{\w }]$. 
Since $I_F\sm \{ 3\} \neq \emptyset $, 
this implies that $I_F$ contains $1$ 
by Claim~\ref{claim:1}(iv). 
So, 
assume that $I_F$ contains $1$. 
Then, 
there exists 
$E_1\in \E _1$ such that $\degw F'<\degw F$ 
and $F'$ belongs to $\A $, 
where $F'=F\circ E_1$. 
Clearly, 
$F'(x_1)=f_1+\phi _1$ for some $\phi _1\in k[S_1]$ 
and $\deg F'(x_1)<\deg f_1$. 
On account of Claim~\ref{claim:1}(i), 
we may assume that $F'(x_1)^{\w }$ 
does not belong to $k[S_1]^{\w }$ 
by replacing $E_1$ if necessary. 
Then, $F$ and $F'$ satisfy 
all the assumptions of Proposition~\ref{prop:structure3}. 
By the first part of this proposition, 
we may conclude that either $f_1^{\w }\approx (f_3^{\w })^2$, 
or $(f_2^{\w })^2\approx (f_3^{\w })^3$ 
and $F'$ does not admit a Shestakov-Umirbaev reduction. 
We show that $F'$ admits a Shestakov-Umirbaev reduction, 
and hence the latter case is impossible. 
Observe that 
$f_2^{\w }$ and $f_3^{\w }$ 
are algebraically dependent over $k$ in either case, 
since so are $f_1^{\w }$ and $f_2^{\w }$ due to (\ref{eq:key2}). 
This implies that $\degw F'>|\w |$ by (\ref{eq:auto deg lower bound}). 
Since $F'$ is an element of $\A $, 
it follows that 
$I_{F'}\neq \emptyset $ or 
$(F'_{\sigma '},G')$ satisfies the quasi Shestakov-Umirbaev condition 
for some $\sigma '\in \sym _3$ and $G'\in \A $. 
By Proposition~\ref{prop:structure3}(1), 
$F'$ does not admit an elementary reduction. 
Hence, 
$I_{F'}=\emptyset $. 
Thus, 
$(F'_{\sigma '},G')$ satisfies the quasi Shestakov-Umirbaev condition 
for some $\sigma '\in \sym _3$ and $G'\in \A $. 
Accordingly, 
$F'$ admits a quasi Shestakov-Umirbaev reduction. 
Therefore, 
$F'$ admits a Shestakov-Umirbaev reduction 
by Proposition~\ref{prop:equivalence}(ii). 
As a result, 
we get $f_1^{\w }\approx (f_3^{\w })^2$. 
Then, it follows from 
Proposition~\ref{prop:structure3}(2) 
that $\sigma '=\id $ and 
$(F,G')$ satisfies the quasi Shestakov-Umirbaev condition. 
So, we are reduced  to the case where 
$(F_{\sigma },G)$ satisfies 
the quasi Shestakov-Umirbaev condition 
for some $\sigma \in \sym _3$ and $G\in \A $. 
By Claim~\ref{claim:1}(iv), 
we may assume that $(F_{\sigma },G)$ 
satisfies the Shestakov-Umirbaev condition 
by replacing $G$ if necessary. 
Then, there exists $E\in \E _3$ 
such that $F\circ E=G$ by Proposition~\ref{prop:structure1}. 
Since $\deg G<\deg F$ by (P6), 
and since $G$ is an element of $\A $, 
it follows that $\deg F\circ E<\deg F$, 
and $F\circ E$ belongs to $\A $. 
Thus, 
we arrive at the conclusion of Proposition~\ref{prop:key2}. 
Therefore, 
we have proved the assertion of Proposition~\ref{prop:key2} 
in the case where $\deg F=\mu $ 
on the assumption that 
the assertion of Proposition~\ref{prop:key} 
is true if $\deg F<\mu $.

To complete the induction, 
we next show the assertion of Proposition~\ref{prop:key} 
in the case where $\deg F=\mu $ 
on the assumption that 
the assertions of Propositions~\ref{prop:key} and~\ref{prop:key2} 
are true if $\deg F<\mu $ and $\deg F\leq \mu $, 
respectively. 
First, 
assume that $I_F=\emptyset $. 
Then, 
$(F_{\sigma },G)$ satisfies the quasi Shestakov-Umirbaev condition 
for some $\sigma \in \sym _3$ and $G\in \A $. 
Without loss of generality, 
we may assume that $\sigma =\id $. 
By Claim~\ref{claim:1}(iv), 
we may also assume that 
$(F,G)$ satisfies the Shestakov-Umirbaev condition 
by replacing $G$ if necessary. 
Since $I_F=\emptyset $, 
it follows that $F$ 
does not admit an elementary reduction. 
In view of (SU1), 
this implies that $(f_1,f_2)\neq (g_1,g_2)$ 
and $k[f_1,f_2]\neq k[g_1,g_2]$. 
Then, 
we know by the following claim that 
$F\circ E$ belongs to $\A $ for $E\in \E $ 
if $\degw F\circ E\leq \degw F$.

\begin{claim}\label{claim:c1-c6}
Assume that $(F,G)$ satisfies 
the quasi Shestakov-Umirbaev condition 
for some $G\in \A $, 
and $E\in \E _i$ satisfies 
$\degw F\circ E\leq \degw F$, 
where $i\in \{ 1,2,3\} $. 
If $i=1$ or $i=2$, 
or if $i=3$ and $k[f_1,f_2]\neq k[g_1,g_2]$, 
then $F\circ E$ belongs to $\A $. 
\end{claim}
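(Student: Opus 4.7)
The plan is to read the claim directly off Proposition~\ref{prop:c1-c6}, combined with the remark preceding Proposition~\ref{prop:key} that $\A$ is closed under the $\sym_3$-reindexing $F\mapsto F_\sigma$ and under right-composition with a diagonal automorphism $H=(c_1x_1,c_2x_2,c_3x_3)$ with $c_i\in k\sm\zs$.

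First I would invoke Proposition~\ref{prop:c1-c6}, whose hypotheses match those of the present claim exactly. In its case~(i), the pair $(F\circ E,G)$ itself satisfies the quasi Shestakov-Umirbaev condition, so $G$ is by definition a quasi Shestakov-Umirbaev reduction of $F\circ E$ (the strict inequality $\degw G<\degw F\circ E$ coming for free from property~(P6) of Theorem~\ref{thm:SUreduction}). Since $G\in\A$ by hypothesis, concatenating any reduction chain witnessing $G\in\A$ with the single step from $F\circ E$ to $G$ witnesses $F\circ E\in\A$.

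In case~(ii) of Proposition~\ref{prop:c1-c6}, which arises only when $i=3$, $k[f_1,f_2]\neq k[g_1,g_2]$, and $\degw f_j=\degw f_3$ for some $j\in\{1,2\}$, the proposition supplies $u\in k\sm\zs$ for which either $(F\circ E,G')$ or $((F\circ E)_\tau,G'')$ satisfies the quasi Shestakov-Umirbaev condition, where $G',G''$ are the prescribed rescalings of $G$ and $\tau=(j,3)$. I would observe that each of $G'$ and $G''$ has the form $G\circ D$ for an appropriate diagonal automorphism $D=(c_1x_1,c_2x_2,c_3x_3)$, so $G',G''\in\A$ by the closure remark recalled above. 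Arguing as in case~(i) then yields $F\circ E\in\A$ in the first subcase, and $(F\circ E)_\tau\in\A$ in the second; in the latter, a final application of the permutation closure of $\A$ strips off $\tau$ and gives $F\circ E\in\A$.

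I expect no serious obstacle: all the algebraic bookkeeping for rewriting the qSU relations after post-composition by $E$ has already been done inside Proposition~\ref{prop:c1-c6}, and (P6) supplies the strict $\degw$-decrease needed to splice reduction chains. The only routine verification is the identification $G'=G\circ D'$, $G''=G\circ D''$ for suitable diagonal $D',D''$, which is what lets us transfer $G\in\A$ to $G',G''\in\A$.
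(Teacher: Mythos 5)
Your proposal is correct and follows essentially the same route as the paper: apply Proposition~\ref{prop:c1-c6} to get that one of $(F\circ E,G)$, $(F\circ E,G')$, $((F\circ E)_{\tau},G'')$ satisfies the quasi Shestakov-Umirbaev condition, note that $G'$ and $G''$ lie in $\A$ because $\A$ is closed under the diagonal rescalings (and permutations) recalled before Proposition~\ref{prop:key}, and conclude that $F\circ E$ admits a quasi Shestakov-Umirbaev reduction to an element of $\A$, hence lies in $\A$. Your extra remarks about (P6) and stripping off $\tau$ by permutation closure are exactly the details the paper leaves implicit.
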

\begin{proof}\rm
In the notation of Proposition~\ref{prop:c1-c6}, 
one of the pairs $(F\circ E,G)$, $(F\circ E,G')$ 
and $((F\circ E)_{\tau },G'')$ 
satisfies the quasi Shestakov-Umirbaev condition. 
Since $G$ belongs to $\A $, so do $G'$ and $G''$. 
Hence, in each case, 
$F\circ E$ 
admits a quasi Shestakov-Umirbaev reduction 
to an element of $\A $. 
Therefore, $F\circ E$ belongs to $\A $. 
\end{proof}

Therefore, 
the assertion of Proposition~\ref{prop:key} 
is true if $\deg F=\mu $ and $I_F=\emptyset $.

Next, assume that $I_F\neq \emptyset $, 
say $I_F$ contains $3$. 
We have to check that $F\circ E_i$ belongs to $\A $ 
for any $E_i\in \E _i$ with $\deg F\circ E_i\leq \deg F$ 
for each $i\in \{ 1,2,3\} $. 
By Claim~\ref{claim:1}(i), 
this is clear if $i=3$. 
Since the cases $i=1$ and $i=2$ are similar, 
we only consider the case where $i=1$. 
Since we assume that $I_F$ contains $3$, 
there exists $E_3\in \E _3$ 
such that $G:=F\circ E_3$ belongs to $\A $ 
and $\degw G<\degw F$. 
By Claim~\ref{claim:1}(i), 
we may assume that $g_3^{\w }$ 
does not belong to $k[S_3]^{\w }$ 
by replacing $E_3$ if necessary. 
Set $\phi _i=F(E_i(x_i)-x_i)$ for $i=1,3$. 
Then, 
$\phi _i$ belongs to $k[S_i]$ for $i=1,3$, 
and $g_3=f_3+\phi _3$. 
Since $\deg F\circ E_1\leq \deg F$ 
and $\deg G<\deg F$, 
we have $\degw \phi _1\leq \degw f_1$, 
$\phi _3^{\w }=-f_3^{\w }$ 
and $\deg g_3<\deg f_3$.

\begin{claim}\label{claim:2}
$F\circ E_1$ belongs to~${\cal A}$ 
if one of the following conditions holds$:$

{\rm (i)} $E_1(x_1)-x_1$ belongs to $k[x_2]$, 
or equivalently, 
$\phi _1$ belongs to $k[f_2]$.

{\rm (ii)} $f_1^{\w }$ or $f_3^{\w }$ 
belongs to $k[f_2^{\w }]$.

{\rm (iii)} $f_3^{\w }\approx f_1^{\w }+c(f_2^{\w })^p$ 
for some $c\in k$ and $p\in \N $. 
\end{claim}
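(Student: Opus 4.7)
The plan is to dispatch the three sufficient conditions separately, with Claim~\ref{claim:1} as the principal engine. Throughout, $3\in I_F$ is the witness provided by the fixed $G=F\circ E_3\in\A$, and the strategy in each case is to exhibit a triangular automorphism realizing a degree reduction at $x_3$, or to produce a new element of $I_F$ via Claim~\ref{claim:1}(iv), so that Claim~\ref{claim:1}(i) or (iii) can close the case.

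For (i), since $E_1(x_1)-x_1\in k[x_2]$, the map $H$ given by $H(x_1)=E_1(x_1)$, $H(x_2)=x_2$, $H(x_3)=E_3(x_3)$ is triangular with respect to the permutation processing $x_2$ (fixed), then $x_1$ (depending only on $x_2$), then $x_3$ (depending on $x_1,x_2$); the degree reduction $\degw (F\circ H)(x_3)<\degw f_3$ is inherited from $E_3$, so Claim~\ref{claim:1}(iii) with $i=3$ yields $F\circ E_1\in\A$. For the first subcase of (ii), the hypothesis $f_1^\w\in k[f_2^\w]$ combined with $3\in I_F\setminus\{2\}$ feeds Claim~\ref{claim:1}(iv) with $(j,i)=(1,2)$, producing $1\in I_F$, whence Claim~\ref{claim:1}(i) concludes. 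For the second subcase $f_3^\w\in k[f_2^\w]$, write $f_3^\w=c(f_2^\w)^r$ and form the triangular $H'$ given by $H'(x_2)=x_2$, $H'(x_3)=x_3-cx_2^r$, $H'(x_1)=E_1(x_1)$, processed in the order $x_2,x_3,x_1$; the relation $f_3^\w=c(f_2^\w)^r$ supplies $\degw(F\circ H')(x_3)=\degw(f_3-cf_2^r)<\degw f_3$, and Claim~\ref{claim:1}(iii) with $i=3$ finishes.

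Case (iii) is the technical core and cannot be reduced to a single triangular construction, since the mixed dependence $f_3^\w\approx f_1^\w+c(f_2^\w)^p$ would force $H(x_3)-x_3$ to involve $x_1$ while $H(x_1)-x_1$ involves $x_3$, violating triangularity. The idea is to peel off the $f_1$ component first by a preliminary elementary modification: choose $\lambda\in k$ with $f_3^\w=\lambda(f_1^\w+c(f_2^\w)^p)$, let $H_1\in\E_3$ satisfy $H_1(x_3)=x_3-\lambda x_1$, and set $F^{(1)}:=F\circ H_1$. Claim~\ref{claim:1}(i) with $3\in I_F$ gives $F^{(1)}\in\A$, and the resulting leading form $(F^{(1)}(x_3))^\w=\lambda c(f_2^\w)^p$ lies in $k[f_2^\w]$, so $F^{(1)}$ satisfies condition (ii) of the present claim. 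Treating the three conditions in the order (i), (ii), (iii) makes the (ii) conclusion for $F^{(1)}$ already available at this point of the argument, and one then applies it to produce $F^{(1)}\circ E_1\in\A$.

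The main obstacle is that $F^{(1)}\circ E_1\ne F\circ E_1$ because $H_1$ does not commute with $E_1$: writing $F\circ E_1=F^{(1)}\circ(H_1^{-1}\circ E_1)$, the composition $H_1^{-1}\circ E_1$ typically fails to lie in $\E$, so we cannot simply invoke Claim~\ref{claim:1}(i) on $F^{(1)}$ with an element of $\E_1$ to recover $F\circ E_1$. Transferring the conclusion from $F^{(1)}$ back to $F$ therefore requires an additional appeal to Claim~\ref{claim:1}(ii) or (iii) that treats the mixed composition $H_1^{-1}\circ E_1$ by factoring it through an intermediate triangular construction and verifying the relevant degree bounds, so that the simultaneous induction on $\mu$ closes; this is the technical heart of the proof.
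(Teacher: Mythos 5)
Your treatments of (i) and (ii) are correct and coincide with the paper's: in (i) the triangular $H$ with $H(x_2)=x_2$, $H(x_1)=E_1(x_1)$, $H(x_3)=E_3(x_3)$ plus Claim~\ref{claim:1}(iii); in (ii) either Claim~\ref{claim:1}(iv) with $I_F\sm \{2\}\neq \emptyset $ to get $1\in I_F$, or the triangular $H'$ with $H'(x_3)=x_3-cx_2^r$ plus Claim~\ref{claim:1}(iii). The problem is case (iii), which you explicitly leave open, and your proposed route does not close it. After setting $F^{(1)}=F\circ H_1$ with $H_1(x_3)=x_3-\lambda x_1$, applying the already-proved case (ii) to $F^{(1)}$ only yields $F^{(1)}\circ E\in \A $ for elementary $E\in \E _1$ adapted to $F^{(1)}$; the element you actually need, $F\circ E_1=F^{(1)}\circ (H_1^{-1}\circ E_1)$, involves $H_1^{-1}\circ E_1$, which moves both $x_1$ and $x_3$ and is not in $\E $, and it also does not fit Claim~\ref{claim:1}(ii) (which needs an identity $E\circ E'=E_i\circ E''$ with $E',E''\in \E $ and a degree-dropping $E_i$ with $i\in I_F$) nor Claim~\ref{claim:1}(iii) (which needs a single triangular automorphism, and here the dependence of $x_1$ on $x_3$ and of $x_3$ on $x_1$ obstructs triangularity, as you yourself note). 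So the ``technical heart'' is precisely what is missing.

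The paper resolves (iii) by a different mechanism: instead of transferring a particular composition from $F^{(1)}$ back to $F$, it proves directly that $1\in I_F$ and then invokes Claim~\ref{claim:1}(i), which handles an arbitrary $E_1\in \E _1$ once $1\in I_F$. Concretely, choose $c'\in k\sm \zs $ with $\degw f'<\degw f_3$ for $f'=f_3+c'(f_1+cf_2^p)$, and set $E_3'(x_3)=x_3+c'(x_1+cx_2^p)$, $E_1'(x_1)=x_1+cx_2^p-(1/c')x_3$, $E_1''(x_1)=(c')^{-1}(x_3+c'(x_1+cx_2^p))$. Then $F\circ E_3'\in \A $ by Claim~\ref{claim:1}(i) (since $3\in I_F$ and $\degw F\circ E_3'<\degw F=\mu $), so the induction assumption of Proposition~\ref{prop:key} applies to $F\circ E_3'$ and gives $F':=(F\circ E_3')\circ E_1'=(-(1/c')f_3,f_2,f')\in \A $; permuting and rescaling coordinates (allowed inside $\A $) shows $F\circ E_1''=((1/c')f',f_2,f_3)\in \A $, and since $\degw f_1=\degw f_3>\degw f'$ this composition strictly lowers the degree, whence $1\in I_F$ and Claim~\ref{claim:1}(i) finishes. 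Note that this is exactly the point where the simultaneous induction on $\mu $ enters, and it is the step your sketch does not supply; as written, your proof of case (iii) is incomplete.
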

\begin{proof}\rm
(i) If $E_1(x_1)-x_1$ belongs to $k[x_2]$, 
then we can define a triangular automorphism 
$H$ of $\kx $ by $H(x_2)=x_2$ and 
$H(x_i)=E_i(x_i)$ for $i=1,3$. 
Since $\deg F\circ E_3<\deg F$ 
and $3$ is contained in $I_F$, 
it follows from Claim~\ref{claim:1}(iii) that 
$F\circ E_1$ belongs to ${\cal A}$.

(ii) If $f_3^{\w }$ belongs to $k[f_2^{\w }]$, 
then $\degw (f_3-cf_2^r)<\degw f_3$ 
for some $c\in k\sm \zs $ and $r\in \N $. 
Define a triangular automorphism 
$H$ of $\kx $ by $H(x_2)=x_2$, $H(x_3)=x_3-cx_2^r$ 
and $H(x_1)=E_1(x_1)$. 
Since $\deg (F\circ H)(x_3)<\deg f_3$ 
and $3$ is contained in $I_F$, 
it follows from Claim~\ref{claim:1}(iii) 
that $F\circ E_1$ belongs to ${\cal A}$. 
If $f_1^{\w }$ belongs to $k[f_2^{\w }]$, 
then $I_F$ contains $1$ by Claim~\ref{claim:1}(iv), 
since $I_F\sm \{ 2\} \neq \emptyset $. 
Therefore, 
$F\circ E_1$ belongs to ${\cal A}$ by Claim~\ref{claim:1}(i).

(iii) 
By assumption, 
there exists $c'\in k\sm \zs $ such that 
$\degw f'<\degw f_3$, 
where $f'=f_3+c'(f_1+cf_2^p)$. 
Define $E_1',E_1''\in \E _1$ and $E_3'\in \E _3$ 
by $E_1'(x_1)=x_1+cx_2^p-(1/c')x_3$, 
$E_1''(x_1)=(c')^{-1}(x_3+c'(x_1+cx_2^p))$ 
and $E_3'(x_3)=x_3+c'(x_1+cx_2^p)$. 
Then, $\degw F\circ E_3'<\degw F$, 
because $(F\circ E_3')(x_3)=f'$. 
Since $3$ is contained in $I_F$ by assumption, 
$F\circ E_3'$ belongs to ${\cal A}$ by Claim~\ref{claim:1}(i). 
Hence, 
$F':=(F\circ E_3')\circ E_1'$ belongs to ${\cal A}$ 
by the induction assumption of Proposition~\ref{prop:key}. 
Since $F'=(-(1/c')f_3,f_2,f')$, 
this implies that 
$F\circ E_1''=((1/c')f',f_2,f_3)$ belongs to ${\cal A}$. 
By assumption, 
it follows that $\deg f_3=\deg f_1$. 
Hence, 
$\deg F\circ E_1''<\deg F$. 
Thus, $1$ belongs to $I_F$. 
Therefore, 
$F\circ E_1$ belongs to $\A $ by Claim~\ref{claim:1}(i). 
\end{proof}

In the case where 2 belongs to $I_F$ besides 3, 
the statement of Claim~\ref{claim:2} is true 
if we interchange $f_2$ and $f_3$. 
Hence, 
we obtain the following claim. 

\begin{claim}\label{claim:2'}
Assume that $2$ is contained in $I_F$. 
If $\phi _1$ belongs to $k[f_3]$, 
or if $f_1^{\w }$ or $f_2^{\w }$ belongs to $k[f_3^{\w }]$, 
then $F\circ E_1$ belongs to $\A $. 
\end{claim}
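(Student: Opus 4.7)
The plan is to adapt the proof of Claim~\ref{claim:2} with the roles of $(x_2, f_2)$ and $(x_3, f_3)$ interchanged. The underlying mechanism is that Claim~\ref{claim:1}(i), (iii) and (iv) are symmetric in the three indices, so when the hypothesis $3 \in I_F$ used in the proof of Claim~\ref{claim:2} is replaced by $2 \in I_F$, the same triangular-automorphism constructions go through after swapping $x_2 \leftrightarrow x_3$. By hypothesis there is $E_2 \in \E _2$ with $\degw F \circ E_2 < \degw F$ and $F \circ E_2 \in \A $; this replaces the $E_3$ produced at the start of Claim~\ref{claim:2}. The three sub-cases listed in Claim~\ref{claim:2'} correspond precisely to parts (i) and (ii) of Claim~\ref{claim:2} under the swap, while the more delicate part (iii) of that claim has no analogue here.

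First I would treat the sub-case $\phi _1 \in k[f_3]$, which is equivalent to $E_1(x_1) - x_1 \in k[x_3]$. I would define a triangular automorphism $H$ of $\kx $ by $H(x_3) = x_3$, $H(x_1) = E_1(x_1)$ and $H(x_2) = E_2(x_2)$; this is triangular with respect to the ordering $x_3 \prec x_1 \prec x_2$, since $H(x_1) - x_1 \in k[x_3]$ and $H(x_2) - x_2 \in k[x_1, x_3]$. Because $\degw F \circ E_2 < \degw F$ and $2 \in I_F$, Claim~\ref{claim:1}(iii) delivers $F \circ E_1 \in \A $.

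Next I would handle $f_2^{\w } \in k[f_3^{\w }]$ by choosing $c \in k \sm \zs $ and $r \in \N $ with $\degw (f_2 - c f_3^r) < \degw f_2$ and defining a triangular $H$ by $H(x_3) = x_3$, $H(x_2) = x_2 - c x_3^r$ and $H(x_1) = E_1(x_1)$; then Claim~\ref{claim:1}(iii), applied with $i = 2 \in I_F$ using $\degw (F\circ H)(x_2) < \degw f_2$, again yields $F \circ E_1 \in \A $. For the remaining sub-case $f_1^{\w } \in k[f_3^{\w }]$, I would invoke Claim~\ref{claim:1}(iv) with $(i,j) = (3,1)$: since $2 \in I_F \sm \{ 3\} $, this gives $1 \in I_F$, and Claim~\ref{claim:1}(i) then concludes $F \circ E_1 \in \A $.

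I do not anticipate any genuine obstacle: the only points to verify are that each freshly constructed $H$ is triangular under the chosen coordinate ordering and that the invocations of Claim~\ref{claim:1} remain legitimate after the swap. Both are bookkeeping, so the proof should be a straightforward translation of Claim~\ref{claim:2}.
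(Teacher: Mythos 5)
Your proposal is correct and is exactly the argument the paper intends: the paper simply remarks that Claim~\ref{claim:2} holds with $f_2$ and $f_3$ interchanged once $2\in I_F$, and your three sub-cases (triangular $H$ with ordering $x_3\prec x_1\prec x_2$ via Claim~\ref{claim:1}(iii), the analogous $H$ killing $f_2^{\w }$ by a power of $f_3$, and Claim~\ref{claim:1}(iv) with $(i,j)=(3,1)$ followed by Claim~\ref{claim:1}(i)) are precisely that interchange carried out in detail. No gap; the omission of an analogue of Claim~\ref{claim:2}(iii) is consistent with the statement being proved.
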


Now, 
there exist five cases to be considered as follows: 
\begin{align*}
&{\rm (1)} \degw f_1=\degw f_2=\degw f_3; \qquad 
{\rm (2)} \degw f_1<\degw f_2=\degw f_3; \\
&{\rm (3)} \degw f_3<\degw f_1=\degw f_2; \qquad 
{\rm (4)} \degw f_2<\degw f_3=\degw f_1; \\
&{\rm (5)} \degw f_l<\degw f_m\text{ for each }
l\in \{ 1,2,3\} \sm \{ m\} 
\text{ for some }m\in \{ 1,2,3\} . 
\end{align*}

Here, we remark that 
the cases (1)--(4) can be excluded from consideration 
in the case where $\rank \w =3$. 
In fact, 
$\degw f_i=\degw f_j$ implies $f_i^{\w }\approx f_j^{\w }$ 
for each $i$ and $j$ if $\rank \w =3$. 
Hence, 
it immediately follows from 
Claim~\ref{claim:2}(ii) and (iii) that 
$F\circ E_1$ belongs to $\A $ in cases (1)--(4). 
For this reason, 
Claim~\ref{claim:3} and the statement (I) of Claim~\ref{claim:4} below 
are not necessary when considering $\w $ with $\rank \w =3$.

\begin{claim}\label{claim:3}
$F\circ E_1$ belongs to $\A $ if one of the following holds$:$ 

{\rm (i)} $f_1^{\w }$ and $f_2^{\w }$ 
are algebraically independent over $k$. 

{\rm (ii)}  $F$ satisfies one of 
{\rm (1)}, {\rm (2)} and {\rm (3)}. 
\end{claim}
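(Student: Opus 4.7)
The plan is to verify, for both (i) and (ii), that one of the three sufficient conditions in Claim~\ref{claim:2} holds for $F\circ E_1$, using the data $\phi _3\in k[f_1,f_2]$ with $\phi _3^{\w }=-f_3^{\w }$ together with $\phi _1\in k[f_2,f_3]$ satisfying $\degw \phi _1\leq \degw f_1$.

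For (i), assume $f_1^{\w }$ and $f_2^{\w }$ are algebraically independent over $k$. Algebraic independence precludes cancellation at the top $\w $-degree of $\phi _3=\sum c_{a,b}f_1^af_2^b$, so $f_3^{\w }=-\sum _{(a,b)\in T}c_{a,b}(f_1^{\w })^a(f_2^{\w })^b$, where $T$ is the set of $(a,b)\in (\Zn )^2$ realising the top degree. Three scenarios arise. If every $(a,b)\in T$ has $a=0$, then $f_3^{\w }\in k[f_2^{\w }]$ and Claim~\ref{claim:2}(ii) applies. If $T\subseteq \{(0,b):b\in \Zn \}\cup \{(1,0)\}$ and contains $(1,0)$, then the degree equation $b\degw f_2=\degw f_1$ admits at most one further $(0,b)\in T$, giving $f_3^{\w }\approx f_1^{\w }+c(f_2^{\w })^p$ and Claim~\ref{claim:2}(iii) applies. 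Otherwise $T$ contains some $(a,b)$ with $a\geq 2$ or $a=b=1$, so $\degw f_3>\degw f_1$; I will then show $f_2^{\w }$ and $f_3^{\w }$ are algebraically independent by a UFD identity in $k[X,Y]$: a hypothetical relation $(f_2^{\w })^{a^*}\approx (f_3^{\w })^{b^*}$ with $\gcd (a^*,b^*)=1$ forces $(\sum _Tc_{a,b}X^aY^b)^{b^*}\approx Y^{a^*}$ in $k[X,Y]$, whence $\sum _Tc_{a,b}X^aY^b$ is itself a pure power of $Y$, i.e.\ $T=\{(0,a^*)\}$, contradicting the assumption. The resulting algebraic independence of $f_2^{\w },f_3^{\w }$ makes the monomials $f_2^{a'}f_3^{b'}$ in $\phi _1$ contribute distinctly; any term with $b'\geq 1$ has $\w $-degree $\geq \degw f_3>\degw f_1\geq \degw \phi _1$, so $\phi _1\in k[f_2]$ and Claim~\ref{claim:2}(i) applies.

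For (ii), I will reduce each of cases (1)--(3) either to (i) or to Claim~\ref{claim:2}(ii). Assume $f_1^{\w }$ and $f_2^{\w }$ are algebraically dependent, $(f_1^{\w })^p\approx (f_2^{\w })^q$ with $\gcd (p,q)=1$; otherwise (i) applies. In cases (1) and (3) the equality $\degw f_1=\degw f_2$ forces $p=q=1$, hence $f_1^{\w }\approx f_2^{\w }$ and Claim~\ref{claim:2}(ii) applies. Case (2) is the delicate one: $\degw f_1<\degw f_2=\degw f_3$ gives $p>q\geq 1$, and the goal is $f_3^{\w }\approx f_2^{\w }$. Enumerating the monomials $f_1^af_2^b$ of $\phi _3$ whose $\w $-degree equals $\degw f_2$ yields only $(a,b)=(0,1)$ when $q\geq 2$, and both $(a,b)\in \{(0,1),(p,0)\}$ when $q=1$, with $(f_1^{\w })^p\approx f_2^{\w }$ making both leading forms proportional to $f_2^{\w }$. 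The main obstacle arises in the $q=1$ subcase, where higher-degree monomials of $\phi _3$ could in principle produce hidden contributions to the degree-$\degw f_2$ part through cancellation; I will handle this via the change of generators $f_2\mapsto u=f_2-\alpha ^{-1}f_1^p$ (where $(f_1^{\w })^p=\alpha f_2^{\w }$), iterated if necessary until the new generators have algebraically independent leading forms, after which the argument of (i) applies and confines $\phi _3^{\w }$ to a scalar multiple of $f_2^{\w }$. The nonvanishing of $\phi _3^{\w }=-f_3^{\w }$ then yields $f_3^{\w }\approx f_2^{\w }$, and Claim~\ref{claim:2}(ii) concludes.
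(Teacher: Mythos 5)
Your part (i) is correct and is essentially the paper's argument in contrapositive form: the paper first reduces, via Claim~\ref{claim:2}(i) and (ii), to $\phi _1\in k[f_2,f_3]\sm k[f_2]$ and $f_3^{\w }\notin k[f_2^{\w }]$, and in your third scenario ($\degw f_3>\degw f_1$) it gets a contradiction from Lemma~\ref{lem:degS1} applied to $\phi _1$ (since $\degw \phi _1\leq \degw f_1<\degw f_3$ and $\phi _1\notin k[f_2]$ give $\degw \phi _1<\degw ^{S_1}\phi _1$), whereas you show directly that $\phi _1\in k[f_2]$; these are the same maneuver. Your unproved step ``algebraic dependence of $f_2^{\w },f_3^{\w }$ implies $(f_2^{\w })^{a^*}\approx (f_3^{\w })^{b^*}$'' is exactly what Lemma~\ref{lem:degS1} supplies in the situation where you need it, so (i) is fine.

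The gap is in case (2) of part (ii). You try to force $f_3^{\w }\approx f_2^{\w }$ from $\phi _3$ alone, and the iterated change of generators $f_2\mapsto u=f_2-\alpha ^{-1}f_1^p$ does not deliver this. First, the iteration can stall: the new pair $(f_1,u)$ may have algebraically dependent leading forms with $(u^{\w })^{q'}\approx (f_1^{\w })^{p'}$ and $p',q'\geq 2$, in which case neither leading form lies in $k$ of a power of the other and no further elementary substitution removes the dependence, so you cannot ``iterate until the leading forms are independent.'' Second, even when you do reach generators $f_1,u$ of $k[S_3]$ with independent leading forms, all you know is that $\phi _3^{\w }$ is a $\w $-homogeneous element of $k[f_1^{\w },u^{\w }]$ of degree $\degw f_2$; mixed monomials $(f_1^{\w })^a(u^{\w })^b$ of that degree are possible (for instance when $\degw u=\degw f_1$), so ``the argument of (i)'' does not confine $\phi _3^{\w }$ to $k\, f_2^{\w }$, and such a mixed leading form satisfies none of the three conditions of Claim~\ref{claim:2}, which are phrased in terms of $f_2^{\w }$. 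The missing idea is to use $\phi _1$, not $\phi _3$, in this case, as the paper does: after the Claim~\ref{claim:2}(i) reduction you may assume $\phi _1\in k[f_2,f_3]\sm k[f_2]$; since $\degw \phi _1\leq \degw f_1<\degw f_3$, the remark in Section~\ref{sect:ineq} gives $\degw \phi _1<\degw ^{S_1}\phi _1$, and Lemma~\ref{lem:degS1} yields $(f_2^{\w })^{p}\approx (f_3^{\w })^{q}$ with $\gcd (p,q)=1$; the equality $\degw f_2=\degw f_3$ then forces $p=q=1$, i.e.\ $f_3^{\w }\approx f_2^{\w }$, and Claim~\ref{claim:2}(ii) concludes.
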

\begin{proof}\rm
By Claim~\ref{claim:2}(i), 
we may assume that $\phi _1$ belongs to $k[f_2,f_3]\sm k[f_2]$. 
Then, it follows that, 
if $\degw f_1<\degw f_3$, 
then $f_2^{\w }$ and $f_3^{\w }$ 
are algebraically dependent over $k$. 
In fact, 
since $\degw \phi _1\leq \degw f_1<\deg f_3$, 
and since $\phi _1$ belongs to $k[f_2,f_3]\sm k[f_2]$, 
we have $\deg \phi _1<\degw ^{S_1}\phi _1$. 
Hence, 
$(f_2^{\w })^p\approx (f_3^{\w })^q$ 
for some $p,q\in \N $ by Lemma~\ref{lem:degS1}.

(i) 
Recall that $f_3^{\w }\approx \phi _3^{\w }$ 
and $\phi _3$ is an element of $k[S_3]$. 
Hence, $f_3^{\w }$ belongs to $k[S_3]^{\w }$. 
Since 
$f_1^{\w }$ and $f_2^{\w }$ 
are algebraically independent over $k$, 
we have $k[S_3]^{\w }=k[f_1^{\w },f_2^{\w }]$. 
Thus, 
$f_3^{\w }$ is a polynomial in 
$f_1^{\w }$ and $f_2^{\w }$ over $k$. 
By Claim~\ref{claim:2}(ii), 
we may assume that $f_3^{\w }$ 
does not belong to $k[f_2^{\w }]$. 
Then, it follows that 
$\degw f_1\leq \degw f_3$. 
We show that $\degw f_1=\degw f_3$ by contradiction. 
Supposing $\degw f_1<\degw f_3$, 
we get that $f_2^{\w }$ and $f_3^{\w }$ 
are algebraically dependent over $k$ 
as remarked above. 
Since $f_3^{\w }$ is an element of 
$k[f_1^{\w },f_2^{\w }]\sm k[f_2^{\w }]$, 
it follows that 
$f_1^{\w }$ and $f_2^{\w }$ 
are algebraically dependent over $k$, 
a contradiction. 
Thus, 
$\degw f_1=\degw f_3$. 
This implies that 
$f_3^{\w }\approx f_1^{\w }+c(f_2^{\w })^p$ 
for some $c\in k$ and $p\in \N $. 
Therefore, 
$F\circ E_1$ belongs to $\A $ by Claim~\ref{claim:2}(iii).

(ii) By (i), we may assume that $f_1^{\w }$ and $f_2^{\w }$ 
are algebraically dependent over $k$. 
Then, 
$f_1^{\w }\approx f_2^{\w }$ follows from $\degw f_1=\degw f_2$ 
in cases (1) and (3). 
In case (2), 
it follows from $\deg f_1<\deg f_3$ that 
$f_2^{\w }$ and $f_3^{\w }$ 
are algebraically dependent over $k$ as remarked above. 
Then, $f_2^{\w }\approx f_3^{\w }$ 
follows from $\degw f_3=\degw f_2$. 
By Claim~\ref{claim:2}(ii), 
$F\circ E_1$ belongs to $\A $ in every case. 
\end{proof}

Let us complete the proof of 
Proposition~\ref{prop:key} by contradiction. 
Suppose to the contrary that 
$F\circ E_1$ does not belong to $\A $. 
Then, 
the conditions (i), (ii) and (iii) of Claim~\ref{claim:2} 
and (i) and (ii) of Claim~\ref{claim:3} cannot be satisfied. 
In particular, $F$ satisfies (4) or (5). 
Furthermore, 
$f_1^{\w }$ and $f_3^{\w }$ 
must be algebraically independent over $k$ in case (4). 
We show that, 
if $F$ satisfies (5) for $m=2$, 
and if $f_2^{\w }$ does not belong to $k[f_1^{\w }]$, 
then $f_3^{\w }$ does not belong to $k[f_1^{\w }]$. 
Supposing the contrary, we have 
$f_3^{\w }\approx (f_1^{\w })^p$ for some $p\in \N $. 
Then, $p\geq 2$ in view of Claim~\ref{claim:2}(iii). 
Hence, $\deg f_1<\deg f_3$. 
We verify that $f=f_3$, $g=f_2$ and $\phi =\phi _1$ 
satisfy the assumptions of Lemma~\ref{lem:degS2}(ii) 
with $\deg \phi <\deg f$. 
Recall that 
$\phi _1$ is an element of $k[f_2,f_3]$ 
such that $\deg \phi _1\leq \deg f_1$. 
Since $\deg f_1<\deg f_3$, 
we have $\deg \phi _1<\deg f_3$. 
On account of Claim~\ref{claim:2}(i), 
$\phi _1$ cannot belong to $k[f_2]$. 
Thus, it follows that 
$\degw \phi _1<\degw ^{S_1}\phi _1$. 
By assumption, 
$f_2^{\w }$ does not belong to $k[f_1^{\w }]$. 
Since $f_3^{\w }\approx (f_1^{\w })^p$, 
it follows that 
$f_2^{\w }$ does not belong to $k[f_3^{\w }]$. 
By the condition (5) for $m=2$, 
we have $\deg f_3<\deg f_2$. 
Thus, 
the assumptions of Lemma~\ref{lem:degS2}(ii) are satisfied, 
and so we conclude that 
$$
\degw \phi _1\geq 
(3-2)\frac{1}{2}\degw f_3+\degw df_2\wedge df_3
>\frac{1}{2}\deg f_3=\frac{p}{2}\degw f_1\geq \degw f_1. 
$$
This contradicts that $\deg \phi _1\leq \deg f_1$. 
Therefore, 
$f_3^{\w }$ does not belong to $k[f_1^{\w }]$ 
if $F$ satisfies (5) for $m=2$, 
and  $f_2^{\w }$ does not belong to $k[f_1^{\w }]$. 

\begin{claim}\label{claim:4}
If $F\circ E_1$ does not belong to $\A $, 
then one of the following holds$:$ 

{\rm (I)} $\degw f_2<\degw f_1$, 
$\degw f_1=\degw f_3$, 
$f_1^{\w }\not\approx f_3^{\w }$, 
and $f_1^{\w }$ and $f_3^{\w }$
do not belong to $k[f_2^{\w }]$ 
and  $k[f_1^{\w },f_2^{\w }]$, 
respectively.

{\rm (II)} 
$\degw f_i<\degw f_j$, $\degw f_3<\degw f_j$, 
and $f_j^{\w }$ and $f_3^{\w }$ 
do not belong to $k[f_i^{\w }]$ for some 
$(i,j)\in \{ (1,2),(2,1)\} $.

{\rm (III)} $\degw f_1<\degw f_j$, $\degw f_i<\degw f_j$, 
$f_1^{\w }$ and $f_j^{\w }$ do not belong to $k[f_i^{\w }]$, 
and $\phi _1$ belongs to $k[S_1]\sm k[f_i]$ 
for some $(i,j)\in \{ (2,3),(3,2)\} $. 
\end{claim}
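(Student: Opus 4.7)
The plan is a case analysis on the relative $\w$-degrees of $f_1,f_2,f_3$. Since $F\circ E_1\notin\A$, every hypothesis of Claim~\ref{claim:2} and Claim~\ref{claim:3} must fail; in particular Claim~\ref{claim:3}(ii) rules out the three equal-degree patterns (1), (2), (3), so $F$ satisfies either (4) or (5) with a strict maximum index $m\in\{1,2,3\}$. I would treat the four resulting subcases in order of difficulty, using only the failures of Claims~\ref{claim:2}, \ref{claim:2'}, \ref{claim:3} together with the preliminary assertion stated just before Claim~\ref{claim:4}.

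The cases $m=1$ and $m=3$ are direct. In (5a) ($m=1$), take $(i,j)=(2,1)$ in (II): the degree inequalities are given, and the two non-memberships $f_1^{\w},f_3^{\w}\notin k[f_2^{\w}]$ are exactly Claim~\ref{claim:2}(ii). In (5c) ($m=3$), take $(i,j)=(2,3)$ in (III): again Claim~\ref{claim:2}(ii) gives $f_1^{\w},f_3^{\w}\notin k[f_2^{\w}]$, and Claim~\ref{claim:2}(i) gives $\phi_1\in k[S_1]\setminus k[f_2]$.

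For case (5b) ($m=2$) I would further split on whether $f_2^{\w}\in k[f_1^{\w}]$. If it does not, the preliminary statement proved just before Claim~\ref{claim:4} yields $f_3^{\w}\notin k[f_1^{\w}]$, whence (II) holds with $(i,j)=(1,2)$. If it does, then applying Claim~\ref{claim:1}(iv) with $i=1,j=2$ (using $3\in I_F\setminus\{1\}$) forces $2\in I_F$; Claim~\ref{claim:2'} then supplies $\phi_1\notin k[f_3]$ and $f_1^{\w},f_2^{\w}\notin k[f_3^{\w}]$, which together with the degree inequalities of (5b) deliver (III) with $(i,j)=(3,2)$.

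The remaining case (4), $\degw f_2<\degw f_1=\degw f_3$, is where (I) must be extracted and is the most delicate. Claim~\ref{claim:2}(iii) applied with $c=0$ gives $f_3^{\w}\not\approx f_1^{\w}$; since both are $\w$-homogeneous of the same degree, any algebraic relation would be $\w$-homogeneous and, after factoring over $\bar{k}$ into linear forms, would force a scalar proportionality (the ratio lies in $k(\x)\cap\bar{k}=k$), so $f_1^{\w}$ and $f_3^{\w}$ are algebraically independent. The condition $f_1^{\w}\notin k[f_2^{\w}]$ is once more Claim~\ref{claim:2}(ii). The main obstacle I anticipate is ruling out $f_3^{\w}\in k[f_1^{\w},f_2^{\w}]$: isolating the $\w$-homogeneous part of degree $\degw f_1$ shows that any such expression must have the form $f_3^{\w}=cf_1^{\w}+c'(f_2^{\w})^l$ for some $l\in\N$ with $l\degw f_2=\degw f_1$, after which $c=0$ contradicts Claim~\ref{claim:2}(ii) and $c\neq 0$ contradicts Claim~\ref{claim:2}(iii), closing the case.
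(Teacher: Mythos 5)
Your proposal is correct and follows essentially the same route as the paper: the same reduction to cases (4) and (5) via the failure of Claim~\ref{claim:3}(ii), the same treatment of $m=1,3$ by Claim~\ref{claim:2}(i),(ii), and for $m=2$ the same split on $f_2^{\w }\in k[f_1^{\w }]$ using the remark preceding the claim, Claim~\ref{claim:1}(iv) and Claim~\ref{claim:2'}. The only (harmless) deviations are in case (4), where you finish the subcase $c\neq 0$ directly with Claim~\ref{claim:2}(iii) instead of the paper's appeal to Claim~\ref{claim:3}(i) and homogeneity, and where your argument that $f_1^{\w }$ and $f_3^{\w }$ are algebraically independent, though valid, is not needed for the statement of (I).
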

\begin{proof}\rm
We show that $F$ satisfies (I) in case (4), 
where $\deg f_2<\deg f_1$ and $\deg f_1=\deg f_3$. 
On account of Claim~\ref{claim:2}(ii) and (iii), 
$f_l^{\w }$ does not belong to $k[f_2^{\w }]$ for $l=1,3$, 
and $f_3^{\w }\not\approx f_1^{\w }$. 
We show that $f_3^{\w }$ 
does not belong to $k[f_1^{\w },f_2^{\w }]$ by contradiction. 
Supposing the contrary, 
we have $f_3^{\w }=af_1^{\w }+b(f_2^{\w })^p$ 
for some $a,b\in k$ with $(a,b)\neq (0,0)$ and $p\geq 2$, 
since $\deg f_3=\deg f_1$ and $\deg f_1>\deg f_2$. 
If $a=0$ or $b=0$, 
then $f_3^{\w }$ belongs to $k[f_2^{\w }]$ 
or $f_3^{\w }\approx f_1^{\w }$, 
contradictions. 
Hence, 
$a\neq 0$ and $b\neq 0$. 
It follows that $\degw f_1^{\w }=\degw (f_2^{\w })^p$. 
Owing to Claim~\ref{claim:3}(i), 
$f_1^{\w }$ and $f_2^{\w }$ 
must be algebraically dependent over $k$. 
Thus, 
$f_1^{\w }\approx (f_2^{\w })^p$, 
and so $f_1^{\w }$ belongs to $k[f_2^{\w }]$, 
a contradiction. 
Therefore, 
$f_3^{\w }$ 
does not belong to $k[f_1^{\w },f_2^{\w }]$. 
This proves that $F$ satisfies (I) in case (4). 

We show that $F$ satisfies (II) or (III) in case (5). 
Since the conditions (i), (ii) and (iii) 
of Claim~\ref{claim:2} are not satisfied by supposition, 
(II) holds for $(i,j)=(2,1)$ if $m=1$, 
and (III) holds for $(i,j)=(2,3)$ if $m=3$. 
Assume that $m=2$. 
As shown before this claim, 
if $f_2^{\w }$ does not belong to $k[f_1^{\w }]$, 
then neither does $f_3^{\w }$. 
Hence, (II) holds for $(i,j)=(1,2)$. 
If $f_2^{\w }$ belongs to $k[f_1^{\w }]$, 
then $I_F$ contains $2$ by Claim~\ref{claim:1}(iv), 
since $I_F\sm \{ 1\} \neq \emptyset $. 
By Claim~\ref{claim:2'}, 
we know that 
$\phi _1$ belongs to $k[S_1]\sm k[f_3]$, 
and $f_1^{\w }$ and $f_2^{\w }$ do not belong to $k[f_3^{\w }]$. 
Therefore, (III) holds for $(i,j)=(3,2)$. 
\end{proof}

We consider the cases (I) and (II) together. 
Recall that 
$\phi _3^{\w }\approx f_3^{\w }$, 
$\deg g_3<\deg f_3$, 
$g_3^{\w }$ does not belong to $k[S_3]^{\w }$, 
and $G=(f_1,f_2,g_3)$ belongs to $\A $. 
We establish the inequality  
\begin{equation}\label{eq:proofineq1}
\degw g_3<\degw f_j-\degw f_i+\degw df_1\wedge df_2
\end{equation}
by contradiction, 
where we set $(i,j)=(2,1)$ in case (I). 
In case (I), 
$f_3^{\w }$ does not belong to $k[f_1^{\w },f_2^{\w }]$, 
and hence neither does $\phi _3^{\w }$. 
The same holds true in case (II) 
because $k[f_1^{\w },f_2^{\w }]=k[f_i^{\w },f_j^{\w }]$, 
$\degw f_3<\degw f_j$ and 
$f_3^{\w }$ does not belong to $k[f_i^{\w }]$. 
Since $\phi _3$ is an element of $k[S_3]$, 
it follows that $\degw \phi _3<\degw ^{S_3}\phi _3$ 
in both cases. 
We show that $G':=(f_j,f_i,g_3)$ 
satisfies the assumptions of Proposition~\ref{prop:key2}. 
Clearly, 
$G'$ is an element of $\A $, 
since so is $G$ by assumption. 
By the conditions in (I) and (II), 
$\deg f_i<\deg f_j$, 
$\degw \phi _3=\degw f_3\leq \degw f_j$, 
and $f_j^{\w }$ does not belong to $k[f_i^{\w }]$. 
Hence, 
it follows from Lemma~\ref{lem:degS2}(ii) that 
$\degw f_i=2\delta $ and $\degw f_j=s\delta $ 
for some $\delta \in \Gamma $ and an odd number $s\geq 3$. 
Since (\ref{eq:proofineq1}) 
is supposed to be false, 
we get 
$$
(s-2)\delta +\degw df_1\wedge df_2
=\deg f_j-\deg f_i+\degw df_1\wedge df_2\leq 
\degw g_3<\degw f_3\leq \degw f_j=s\delta . 
$$
Since $k[S_3]^{\w }$ does not contain $g_3^{\w }$, 
neither does $k[f_i]^{\w }$. 
Thus, $G'$ 
satisfies the assumptions of Proposition~\ref{prop:key2}. 
Because $\deg G'<\deg F=\mu $, 
we may conclude that 
there exists $E_3'\in \E _3$ such that 
$\degw G'\circ E_3'<\degw G'$ 
by induction assumption. 
This contradicts that 
$g_3^{\w }$ does not belong to $k[S_3]^{\w }$, 
thereby proves that 
(\ref{eq:proofineq1}) is true. 
We show that 
$(F',G')$ satisfies the quasi Shestakov-Umirbaev condition, 
where $F'=(f_j,f_i,f_3)$. 
The first two conditions of (SU$1'$), and (SU$2'$) are obvious. 
The last condition of (SU$1'$), and (SU5) 
follow from the construction of $g_3$. 
(SU$3'$) and the first condition of (SU4) 
are included in (I) and (II). 
As mentioned after (\ref{eq:proofineq1}), 
$f_3^{\w }$ does not belong to $k[f_1^{\w },f_2^{\w }]$, 
which is the last condition of (SU4). 
(SU6) is due to (\ref{eq:proofineq1}). 
Thus, 
$(F',G')$ satisfies the quasi Shestakov-Umirbaev condition. 
It follows from Claim~\ref{claim:c1-c6} that 
$F'\circ E$ belongs to $\A $ for each $E\in \E _l$ for $l=1,2$ 
if $\degw F'\circ E\leq \degw F'$. 
In particular, 
$(F\circ E_1)\circ H=F'\circ (H\circ E_1\circ H)$ 
belongs to $\A $, 
where $H=(x_j,x_i,x_3)$. 
Actually, 
$H\circ E_1\circ H$ belongs to $\E _j$, 
and 
$$
\deg F'\circ H\circ E_1\circ H=\deg F\circ E_1\circ H
=\deg F\circ E_1\leq \deg F=\deg F'. 
$$
This implies that $F\circ E_1$ belongs to $\A $. 
Therefore, 
we are led to a contradiction.

Finally, 
we derive a contradiction in case (III). 
It follows that $\degw \phi _1<\degw ^{S_1}\phi _1$, 
since $\phi _1$ is an element of $k[f_i,f_j]\sm k[f_i]$ 
with $\deg \phi _1\leq \deg f_1<\deg f_j$. 
Since $\degw f_i<\degw f_j$, 
and $f_j^{\w }$ does not belong to $k[f_i^{\w }]$, 
we know that $f_i$, $f_j$ and $\phi _1$ 
satisfy the assumptions of Lemma~\ref{lem:degS2}(ii). 
Hence, 
there exist $\delta \in \Gamma $ and 
an odd number $s\geq 3$ 
such that $\degw f_i=2\delta $, $\degw f_j=s\delta $ and 
$$
(s-2)\delta +\degw df_2\wedge df_3
=(s-2)\delta +\degw df_i\wedge df_j\leq 
\degw \phi _1\leq \degw f_1<\degw f_j=s\delta . 
$$
Thus, $F_{\tau }$ satisfies (\ref{eq:key2}) 
for 
$\tau \in \sym _3$ with $\tau (1)=j$, $\tau (2)=i$ and $\tau (3)=1$. 
Note that $F_{\tau }$ is an element of $\A $ 
with $\deg F_{\tau }=\mu $, 
since so is $F$. 
As $f_1^{\w }$ does not belong to $k[f_i^{\w }]$, 
the assumptions of Proposition~\ref{prop:key2} 
are fulfilled for $F_{\tau }$. 
Hence, 
by induction assumption, 
we conclude that 
$\degw F_{\tau }\circ E_3'<\degw F_{\tau }$ 
and $F_{\tau }\circ E_3'$ belongs to ${\cal A}$ 
for some $E_3'\in \E _3$. 
Thus, $I_{F_{\tau }}$ contains $3$, 
and so $I_F$ contains $1$. 
Therefore, 
$F\circ E_1$ belongs to ${\cal A}$ by Claim~\ref{claim:1}(i), 
a contradiction.

This proves that the statement of Proposition~\ref{prop:key} 
holds for each $F\in \A $ with $\deg F=\mu $. 
Thus, 
the proofs of Propositions~\ref{prop:key} and~\ref{prop:key2} 
are completed by induction. 
Thereby, we have completed the proof Theorem~\ref{thm:main1}.

\section{Relations with the theory of Shestakov-Umirbaev}
\label{sect:remark}
\setcounter{equation}{0}

In this section, 
we discuss relations with the original theory 
of Shestakov-Umirbaev. 
Throughout this section, 
we assume that $\Gamma =\Z $ and $\w =(1,1,1)$. 
Hence, 
$\deg F\geq |\w |=3$ for each $F\in \Aut _k\kx $. 
First, we recall the notions of 
reductions of types I, II, III and IV 
defined by Shestakov-Umirbaev~\cite[Definitions 1, 2, 3 and 4]{SU2}.

\smallskip

Let $F=(f_1,f_2,f_3)$ be an element of $\Aut _k\kx $ 
such that $\degw f_1=2l$ and $\degw f_2=sl$ 
for some $l\in \N $ and an odd number $s\geq 3$. 

\smallskip

(1) $F$ is said to {\it admit a reduction of type I} 
if $2l<\degw f_3\leq sl$, 
$f_3^{\w }$ does not belong to $k[f_1^{\w },f_2^{\w }]$, 
and there exists $\alpha \in k\sm \zs $ for which 
$g_1:=f_1$ and $g_2:=f_2-\alpha f_3$ 
satisfy the following conditions: 

(i) $\degw g_2=sl$, and 
$g_1^{\w }$ and $g_2^{\w }$ 
are algebraically dependent over $k$. 

(ii) $\degw g_3<\degw f_3$ and 
$\degw dg_1\wedge dg_3<sl+\degw dg_1\wedge dg_2$ 
for some $\phi \in k[g_1,g_2]$, 
where $g_3=f_3+\phi $. 

\smallskip

(2) $F$ is said to {\it admit a reduction of type II} if $s=3$, 
$(3/2)l<\degw f_3\leq 2l$, 
$f_1^{\w }\not\approx f_3^{\w }$, 
and there exist $\alpha ,\beta \in k$ 
with $(\alpha ,\beta )\neq (0,0)$ for which 
$g_1:=f_1-\alpha f_3$ and 
$g_2:=f_2-\beta f_3$ satisfy the following conditions: 

(iii) $\degw g_1=2l$, $\degw g_2=3l$, and 
$g_1^{\w }$ and $g_2^{\w }$ 
are algebraically dependent over $k$. 

(iv) $\degw g_3<\degw f_3$ and 
$\degw dg_1\wedge dg_3<3l+\degw dg_1\wedge dg_2$ 
for some $\phi \in k[g_1,g_2]$, 
where $g_3=f_3+\phi $. 

\smallskip

Next, 
let $F=(f_1,f_2,f_3)$ be an element of $\Aut _k\kx $ 
such that $\degw f_1=2l$, 
and either $\degw f_2=3l$ and $l<\degw f_3\leq (3/2)l$, 
or $(5/2)l<\degw f_2\leq 3l$ and $\degw f_3=(3/2)l$ 
for some $l\in \N $. 
Assume that there exist $\alpha ,\beta ,\gamma \in k$ 
such that $g_1:=f_1-\beta f_3$ and 
$g_2:=f_2-\gamma f_3-\alpha f_3^2$ 
satisfy the following conditions: 

(v) $\degw g_1=2l$, $\degw g_2=3l$, 
and $g_1^{\w }$ and $g_2^{\w }$ 
are algebraically dependent over $k$.

(vi) $\degw g_3\leq (3/2)l$ and 
$\degw dg_1\wedge dg_3<3l+\degw dg_1\wedge dg_2$ 
for some $\sigma \in k\sm \zs $ and $g\in k[g_1,g_2]\sm k$, 
where $g_3=\sigma f_3+g$. 

\smallskip
(3) $F$ is said to {\it admit a reduction of type III} 
if we can choose $\alpha $, $\beta $, $\gamma $, $\sigma $ 
and $g$ 
so that 
$(\alpha ,\beta ,\gamma )\neq (0,0,0)$ and 
$\degw g_3<l+\degw dg_1\wedge dg_2$. 

\smallskip

(4) $F$ is said to {\it admit a reduction of type IV} 
if we can choose $\alpha $, $\beta $, $\gamma $, $\sigma $ and $g$ 
so that $\degw (g_2-\mu g_3^2)\leq 2l$ 
for some $\mu \in k\sm \zs $. 

\smallskip

We also say that $F$ admits a reduction of 
type I (resp.\ II, III and IV) 
if $F_{\sigma }$ satisfies (1) (resp.\ (2), (3) and (4)) 
for some $\sigma \in \sym _3$. 

\smallskip

Here, we note that the conditions (i), (iii) and (v) 
are equivalent to the condition that 
$g_1$, $g_2$ is a ``two-reduced pair", 
since the conditions on $\deg g_1$ and $\deg g_2$ 
imply $g_1^{\w }\not\in k[g_2^{\w }]$ 
and $g_2^{\w }\not\in k[g_1^{\w }]$. 
Although Shestakov-Umirbaev~\cite{SU2} considered the 
``Poisson bracket'' $[f,g]$ instead of $df\wedge dg$ 
for $f,g\in \kx $, 
the degrees of $[f,g]$ and $df\wedge dg$ 
are defined in the same way.

The following theorem is a consequence of Theorem~\ref{thm:main1} 
and Proposition~\ref{prop:structure3}. 

\begin{theorem}\label{thm:type IV}
No tame automorphism of $\kx $ 
admits a reduction of type IV. 
\end{theorem}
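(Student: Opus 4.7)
My plan is to argue by contradiction. Suppose $F\in \T _k\kx $ admits a reduction of type IV with parameters $l$, $\alpha ,\beta ,\gamma ,\sigma \in k$, $\mu \in k\sm \zs $ and $g\in k[g_1,g_2]\sm k$ as in the definition, and form $G:=(g_1,g_2,g_3)\in \TT $. Since each $g_i$ is obtained from $F$ by elementary substitutions in a single slot together with a non-zero rescaling of the third component, $G$ is also tame.

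The first step is to pin down the leading-form structure forced by the type IV data. From (v) and Lemma~\ref{lem:degS1}, $(g_1^\w )^3\approx (g_2^\w )^2$. The extra condition $\degw (g_2-\mu g_3^2)\leq 2l<3l=\degw g_2$ with $\mu \neq 0$ then forces $g_2^\w \approx \mu (g_3^\w )^2$ and $\degw g_3=(3/2)l$; consequently $(g_1^\w )^3\approx \nu (g_3^\w )^4$ for some $\nu \in k\sm \zs $, and $g_3^\w \notin k[g_1^\w ]$ by degree incompatibility.

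The main technical step is to verify that the permuted triple $G^*:=(g_2,g_1,g_3)$ satisfies the hypotheses of Proposition~\ref{prop:structure3} with $s=3$ and $\delta =l$. The degree conditions (\ref{eq:ana condition}) and $g_3^\w \notin k[g_1^\w ]$ follow at once. The inequality (\ref{eq:ppfstr2})---namely $\degw dg_1\wedge dg_2<(1/2)l+\min \{ l,\ep \} $ with $\ep :=\degw dg_1\wedge dg_2\wedge dg_3$---is extracted from (vi) by wedging $dg_1$ with the identity $dg_2=2\mu g_3\,dg_3+d(g_2-\mu g_3^2)$ and using $\degw d(g_2-\mu g_3^2)\leq 2l$ to bound the tail. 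Taking $\phi _1:=-\mu g_3^2\in k[g_3]\subset k[S_1^*]$ gives $\degw (g_1^*+\phi _1)=\degw (g_2-\mu g_3^2)\leq 2l<3l=\degw g_1^*$, so the key estimation (\ref{eq:delta bound}) in the proof of Proposition~\ref{prop:structure3} delivers the lower bound $\degw (g_2-\mu g_3^2)>\delta =l$.

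Because $(g_1^*)^\w =g_2^\w \approx \mu (g_3^\w )^2$ already holds, we land in the first alternative of Proposition~\ref{prop:structure3}, and after adjusting $\phi _1$ so that $(g_1^*+\phi _1)^\w $ does not belong to $k[S_1^*]^\w $, both clauses (1) and (2) apply. Clause (1) shows that $G^{*\prime }:=(g_2-\mu g_3^2,g_1,g_3)$ admits no elementary reduction, while Theorem~\ref{thm:main1} (applicable since $G^{*\prime }$ is tame with $\degw G^{*\prime }>|\w |=3$) forces it to admit a Shestakov-Umirbaev reduction. Clause (2) then pushes this reduction back to a quasi Shestakov-Umirbaev reduction of $G^*$ with $\sigma =\id $, and the resulting properties (P1)--(P12) from Theorem~\ref{thm:SUreduction} conflict with the data $\degw g_1=2l$, $\degw g_2=3l$, $\degw g_3=(3/2)l$, $(g_1^\w )^3\approx \nu (g_3^\w )^4$ and $g_2^\w \approx \mu (g_3^\w )^2$ established in the first step, yielding the desired contradiction. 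The principal obstacle is the determinantal calculation leading to (\ref{eq:ppfstr2}); once it is in place, the key estimation (\ref{eq:delta bound}), which the remark following Proposition~\ref{prop:structure3} singles out as ``the bound that rules out type IV'', propagates more or less mechanically to the contradiction.
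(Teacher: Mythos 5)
Your overall strategy is the paper's: pass to the permuted triple $(g_2,g_1,g_3)$, verify the hypotheses of Proposition~\ref{prop:structure3} with $s=3$, $\delta =l$, and combine clauses (1), (2) with Theorem~\ref{thm:main1}. But the step you yourself single out as the principal obstacle is not established by your sketch. Wedging $dg_1$ with $dg_2=2\mu g_3\,dg_3+d(g_2-\mu g_3^2)$ gives $dg_1\wedge dg_2=2\mu g_3\,dg_1\wedge dg_3+dg_1\wedge d(g_2-\mu g_3^2)$, and to extract an upper bound on $\degw dg_1\wedge dg_2$ you would need upper bounds on both terms on the right. The tail is only bounded by $\degw g_1+\degw (g_2-\mu g_3^2)\leq 4l$, already far above the target $(1/2)l+\min \{ l,\ep \} $, and the only control condition (vi) gives on the main term is $\degw dg_1\wedge dg_3<3l+\degw dg_1\wedge dg_2$, which runs in the circular direction. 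So (\ref{eq:ppfstr2}) does not follow this way. The inequality actually comes from the other half of (vi): $g=g_3-\sigma f_3$ is a \emph{nonconstant} element of $k[g_1,g_2]$ with $\degw g\leq (3/2)l<\degw g_1<\degw g_2$, hence $\degw g<\degw ^Ug$ for $U=\{ g_1,g_2\} $, and Lemma~\ref{lem:degS2}(ii) (applied with $f=g_1$, $g=g_2$, $q=3$) forces $\degw g\geq l+\degw dg_1\wedge dg_2$, whence $\degw dg_1\wedge dg_2\leq (1/2)l$, which is what (\ref{eq:ppfstr2}) needs since $\ep =3$ and $l\geq 1$.

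The endgame is also misdirected. The data $\degw g_1=2l$, $\degw g_2=3l$, $\degw g_3=(3/2)l$, $g_2^{\w }\approx \mu (g_3^{\w })^2$, $(g_1^{\w })^3\approx (g_2^{\w })^2$ is perfectly compatible with $(g_2,g_1,g_3)$ admitting a quasi Shestakov-Umirbaev reduction: property (P8) explicitly allows the exceptional case $f_1^{\w }\approx (f_3^{\w })^2$, which is exactly this configuration (with $s=3$, $\delta =l$), and none of (P1)--(P12) rules it out. So ``conflict with the data established in the first step'' is not a contradiction, and indeed it should not be, since the tame triple $(g_2,g_1,g_3)$ has degree $>3$ and must admit some reduction. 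The contradiction must be drawn at the level of the modified triple: clause (2) of Proposition~\ref{prop:structure3} gives $\sigma =\id $, so the pair $\bigl((g_2',g_1,g_3),K\bigr)$ itself satisfies the (quasi) Shestakov-Umirbaev condition, where $g_2'=g_2+\phi _2$ has $\degw g_2'\leq 2l$; then (P7) requires $\degw g_1<\degw g_2'$, contradicting $\degw g_2'\leq 2l=\degw g_1$. With these two repairs your outline coincides with the paper's proof.
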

\begin{proof}
Suppose to the contrary that 
$F$ satisfies (4) for some $F\in \T _k\kx $. 
Then, 
$g_1$ and $g_2$ appearing in the condition satisfy 
$\degw g_1=2l$ and $\degw g_2=3l$. 
Moreover, 
since $\degw (g_2-\mu g_3^2)\leq 2l<(5/2)l<\deg g_2$ 
for some $\mu \in k\sm \zs $, 
we have $g_2^{\w }\approx (g_3^{\w })^2$. 
Hence, 
$\degw g_3=(3/2)l$. 
Since $F$ belongs to $\T _k\kx $, so does $H:=(g_2,g_1,g_3)$. 
We show that $H$ satisfies 
the assumptions of 
Proposition~\ref{prop:structure3} for $s=3$ and $\delta =l$. 
The degrees of $g_2$, $g_1$ and $g_3$ satisfy 
(\ref{eq:ana condition}), 
and $g_3^{\w }$ does not belong to $k[g_1^{\w }]$, 
since $\deg g_3<\deg g_1$. 
We verify that $\degw dg_1\wedge dg_2\leq (1/2)l$, 
which gives (\ref{eq:ppfstr2}) that 
$$
\degw dg_1\wedge dg_2\leq \frac{1}{2}l
<\frac{3}{2}l-l+1\leq 
\degw g_3-(3-2)l+\min \{ l,\ep \} , 
$$
since 
$\ep =\degw dg_1\wedge dg_2\wedge dg_3=3$ and $l\geq 1$. 
By definition, 
$g$ is an element of $k[g_1,g_2]\sm k$ 
such that 
$\deg g\leq \max \{ \deg f_3,\deg g_3\} =(3/2)l<\deg g_i$ for $i=1,2$. 
Hence, 
$g^{\w }$ does not belong to $k[g_1^{\w },g_2^{\w }]$, 
and so $\deg g<\deg ^Ug$, 
where $U=\{ g_1,g_2\} $. 
Since $\deg g_1=2l$ and $\deg g_2=3l$, 
it follows that $\deg g_1<\deg g_2$ and $g_2^{\w }$ 
does not belong to $k[g_1^{\w }]$. 
Thus, 
$$
\deg g\geq (3-2)l+\degw dg_1\wedge dg_2
=l+\degw dg_1\wedge dg_2
$$
by Lemma~\ref{lem:degS2}(ii). 
Since $\deg g\leq (3/2)l$, 
we conclude that 
$\degw dg_1\wedge dg_2\leq (1/2)l$. 
Therefore, 
$H$ satisfies the assumptions of Proposition~\ref{prop:structure3}. 
Take $\phi _2\in k[g_1,g_3]$ so that 
$(g_2')^{\w }$ does not belong to $k[g_1,g_3]^{\w }$, 
where $g_2'=g_2+\phi _2$. 
Then, 
$\degw g_2'\leq 2l$, 
since $\degw (g_2-\mu g_3^2)\leq 2l$. 
By Proposition~\ref{prop:structure3}(1), 
$H':=(g_2',g_1,g_3)$ does not admit an elementary reduction. 
Since $H$ belongs to $\T _k\kx $, 
so does $H'$. 
Furthermore, 
$\degw H'>3$, 
because $\degw g_i>l\geq 1$ for $i=1,3$. 
Thus, 
$H'$ admits a Shestakov-Umirbaev reduction 
by Theorem~\ref{thm:main1}. 
Hence, 
there exist $\sigma \in \sym _3$ and $K\in \Aut _k\kx $ 
such that 
$(H'_{\sigma },K)$ satisfies the Shestakov-Umirbaev condition. 
Since $g_2^{\w }\approx (g_3^{\w })^2$ as mentioned, 
we know that $\sigma =\id $ by Proposition~\ref{prop:structure3}(2). 
Hence, 
$(H',K)$ satisfies the Shestakov-Umirbaev condition. 
Consequently, we get $\degw g_1<\degw g_2'$ by (P7). 
This contradicts that 
$\deg g_1=2l$ and $\deg g_2'\leq 2l$. 
Therefore, 
$F$ does not admit a reduction of type IV\null. 
\end{proof}

To conclude that Nagata's automorphism is not tame, 
Shestakov-Umirbaev~\cite[Theorem 1]{SU2} showed 
that, 
if $\degw F>3$ for $F\in \T _k\kx $, 
then $F$ admits an elementary reduction 
or a reduction of one of the types I, II, III and IV\null. 
With the aid of the following proposition, 
the criterion of Shestakov-Umirbaev 
is derived from Theorem~\ref{thm:main1}.

\begin{proposition}\label{prop:sured to 123}
Assume that $(F,G)$ satisfies the Shestakov-Umirbaev condition 
for $F,G\in \Aut _k\kx $. 
If $(f_1,f_2)=(g_1,g_2)$, 
then $F$ admits an elementary reduction. 
If $(f_1,f_2)\neq (g_1,g_2)$, 
then $F$ admits a reduction of 
one of the types I, II and III. 
\end{proposition}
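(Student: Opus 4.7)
The plan is to first dispatch the case $(f_1,f_2)=(g_1,g_2)$: by (SU1), $g_3-f_3\in k[g_1,g_2]=k[f_1,f_2]$, and (SU5) gives $\deg _\w (f_3+(g_3-f_3))=\deg _\w g_3<\deg _\w f_3$, so $F$ admits an elementary reduction at the third coordinate. In the remaining case $(f_1,f_2)\neq (g_1,g_2)$, write $g_1=f_1+af_3^2+cf_3$ and $g_2=f_2+bf_3$ via (SU1) with $(a,b,c)\neq (0,0,0)$, and set $\delta =(1/2)\deg _\w g_2$; by (P3) and (SU3) we have $\deg _\w f_2=\deg _\w g_2=2\delta$ and $\deg _\w g_1=s\delta$. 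Since Shestakov-Umirbaev's Types I, II, III place the small-degree coordinate first while (SU1)--(SU6) place the large one first, I shall match $F_\tau$ to a Type for $\tau =(1,2)$, whence $F$ itself qualifies by symmetry; under $\tau$ the Shestakov-Umirbaev coefficients become $\alpha =-c$, $\beta =-b$ in Types I and II (with $a=0$), and $\alpha =-a$, $\beta =-b$, $\gamma =-c$ in Type III.

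The case analysis is driven by $a$ and the range of $\deg _\w f_3$, with four exhaustive subcases: (i) $a\neq 0$; (ii) $a=0$ and $\deg _\w f_3>2\delta$; (iii) $a=0$ and $(3/2)\delta <\deg _\w f_3\leq 2\delta$; (iv) $a=0$ and $\deg _\w f_3\leq (3/2)\delta$. In (i), (P11) forces $s=3$, and $\deg _\w af_3^2\leq \deg _\w g_1=3\delta$ gives $\deg _\w f_3\leq (3/2)\delta$; (P5) then distinguishes $\deg _\w f_1=3\delta$, matching Type III's first subcase $\delta <\deg _\w f_3\leq (3/2)\delta$ (with $\delta <\deg _\w f_3$ from (P2) and (P7)), from $(5/2)\delta <\deg _\w f_1<3\delta$ with $\deg _\w f_3=(3/2)\delta$, matching the second. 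In (ii), (P11) forces $b=0$, hence $c\neq 0$, and the pattern is Type I's. In (iii), (P2) rules out $s\geq 5$ (it would give $\deg _\w f_3\geq 3\delta$), so $s=3$, matching Type II; the extra requirement $f_1^\w \not\approx f_3^\w$, which reads $f_2^\w \not\approx f_3^\w$ after $\tau$, is supplied by (P8). In (iv), $(b,c)\neq (0,0)$ and the degrees again lie in Type III's first subcase.

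In each candidate Type, three uniform verifications remain: the two-reduced pair axiom on $(g_1^{\rm SU},g_2^{\rm SU})$ reduces to the algebraic dependence of $g_1^\w,g_2^\w$ given by (SU3); the descent condition $\deg _\w g_3<\deg _\w f_3$ is (SU5); and the Jacobian inequality $\deg _\w dg_1^{\rm SU}\wedge dg_3<\deg _\w g_2^{\rm SU}+\deg _\w dg_1^{\rm SU}\wedge dg_2^{\rm SU}$ follows from (SU6) via the bound $\deg _\w dg_i\wedge dg_3\leq \deg _\w g_i+\deg _\w g_3$ of (\ref{eq:ineq-wedge}). The additional Type I condition $f_3^\w \notin k[g_1^\w,g_2^\w]$ is (SU4). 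I expect the main obstacle to be bookkeeping across the permutation $\tau$: aligning the signs of $(\alpha,\beta,\gamma)$ with $(a,b,c)$ so that the non-triviality clauses in Types II and III correspond correctly to $(a,b,c)\neq (0,0,0)$, and verifying that the four subcases above partition the nontrivial regime without overlap.
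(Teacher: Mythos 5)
Your argument is essentially the paper's own proof: the first case is (SU1)+(SU5); for the second you pass to $F_{\tau }$ with $\tau =(1,2)$, split into cases which coincide with the paper's three ranges $2\delta <\deg _{\w }f_3\leq s\delta $, $(3/2)\delta <\deg _{\w }f_3\leq 2\delta $, $\delta <\deg _{\w }f_3\leq (3/2)\delta $ (the paper splits by the range of $\deg _{\w }f_3$ and deduces $a=b=0$, resp.\ $a=0$, from (P11); your split by $a$ lands in the same buckets, with your cases (i) and (iv) both giving type III), and the verifications via (SU3), (SU5), (SU6), (\ref{eq:ineq-wedge}) and (P2), (P3), (P5), (P7), (P8), (P11) are the ones the paper carries out.

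The one step that is not right as stated is your disposal of the extra type I condition. The definition of a reduction of type I requires that $f_3^{\w }$ not belong to $k[f_1^{\w },f_2^{\w }]$, i.e.\ the leading forms of the coordinates of $F$ itself, whereas (SU4) only gives $f_3^{\w }\notin k[g_1^{\w },g_2^{\w }]$. In your type I case one has $g_2=f_2$ but $g_1=f_1+cf_3$ with $c\neq 0$, and since the type I range permits $\deg _{\w }f_3=\deg _{\w }g_1=s\delta $, one cannot in general identify $g_1^{\w }$ with $f_1^{\w }$ and transfer (SU4). The paper closes this with a separate short argument: by (P8), $f_3^{\w }$ lies in neither $k[f_1^{\w }]$ nor $k[f_2^{\w }]$, and $\deg _{\w }f_3\leq \deg _{\w }f_1<\deg _{\w }f_1+\deg _{\w }f_2$ by (P7), so $f_3^{\w }\notin k[f_1^{\w },f_2^{\w }]$; you need this (or an equivalent) in place of the bare citation of (SU4). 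Two further, trivial, leftovers for type III should also be recorded: the final requirement $\deg _{\w }g_3<\delta +\deg _{\w }dg_1\wedge dg_2$ is exactly (SU6) with $s=3$, and $g=\phi $ is not a constant because $\deg _{\w }g_3<\deg _{\w }f_3$ by (SU5).
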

\begin{proof}\rm
The first assertion follows from (SU1) and (SU5). 
We show the last assertion. 
By (SU1), we may write 
$g_1=f_1+af_3^2+cf_3$, $g_2=f_2+bf_3$ and $g_3=f_3+\phi $, 
where $a,b,c\in k$ and $\phi \in k[g_1,g_2]$. 
Since $(f_1,f_2)\neq (g_1,g_2)$ by assumption, 
we have $(a,b,c)\neq (0,0,0)$. 
By (SU3), 
there exist $l\in \N $ and an odd number $s\geq 3$ 
such that $\degw g_1=sl$ and $\degw g_2=2l$. 
Then, it follows that $l<\degw f_3\leq sl$ by (P7). 
Put $\tau =(1,2)$. 
We show that $F_{\tau }$ satisfies 
(1) for $\alpha =-c$ if $2l<\degw f_3\leq sl$, 
(2) for $(\alpha ,\beta )=(-b,-c)$ if $(3/2)l<\degw f_3\leq 2l$, 
and (3) for $(\alpha ,\beta ,\gamma )=(-a,-b,-c)$, 
$\sigma =1$ and $g=\phi $ if $l<\degw f_3\leq (3/2)l$.

Note that $\deg f_2=2l$ by (SU2), 
and $\deg f_1=sl$ if $\deg f_3\neq (3/2)l$, 
and $(5/2)l<\deg f_1\leq 3l$ otherwise by (P5). 
Moreover, 
$s=3$ if $\deg f_3\leq 2l$ by (P11). 
From this, 
we see  that the conditions on the degrees of $f_1$ and $f_2$ 
are satisfied in every case. 
It follows that $a=b=0$ if $2l<\degw f_3\leq sl$ by (P11), 
and $a=0$ if $(3/2)l<\deg f_3\leq 2l$, 
since $\deg f_3^2>3l=\deg g_1$. 
Hence, 
$g_2=f_2$ and $g_1=f_1-\alpha f_3$ 
for $\alpha =-c$ if $2l<\degw f_3\leq sl$, 
$g_2=f_2-\alpha f_3$ and $g_1=f_1-\beta f_3$ 
for $(\alpha ,\beta )=(-b,-c)$ if $(3/2)l<\deg f_3\leq 2l$, 
and $g_2=f_2-\beta f_3$ and $g_1=f_1-\gamma f_3-\alpha f_3^2$ 
for $(\alpha ,\beta ,\gamma )=(-a,-b,-c)$ if $l<\degw f_3\leq (3/2)l$, 
in which $\alpha \neq 0$, 
$(\alpha ,\beta )\neq (0,0)$, 
and $(\alpha ,\beta ,\gamma )\neq (0,0,0)$, 
respectively. 
Besides, 
$g=\phi $ in (iv) cannot be an element of $k$, 
since $\deg g_3<\deg f_3$ by (SU5). 
So, 
we verify that (i)--(vi) are satisfied for $g_2$, $g_1$ and $g_3$. 
As mentioned, 
$\deg g_2=2l$ and $\deg g_1=sl$, 
where $s=3$ if $\deg f_3\leq 2l$. 
By (SU3), 
$g_2^{\w }$ and $g_1^{\w }$ are algebraically dependent over $k$. 
Thus, 
(i), (iii) and (v) are satisfied. 
The first conditions in (ii) and (iv) are the same as (SU5). 
If $\deg f_3\leq (3/2)l$, 
then $\deg g_3<\deg f_3\leq (3/2)l$ by (SU5), 
the first condition in (vi). 
The second conditions in (ii), (iv) and (vi) follow from (SU6), 
since 
\begin{equation*}
\degw dg_2\wedge dg_3
\leq \degw g_2+\degw g_3
<\deg g_2+(\deg g_1-\deg g_2+\degw dg_1\wedge dg_2)
=sl+\degw dg_1\wedge dg_2. 
\end{equation*}
Therefore, 
(i)--(vi) are satisfied for $g_2$, $g_1$ and $g_3$.

Let us check the other conditions. 
It follows from (P8) that $f_2^{\w }\not\approx f_3^{\w }$. 
Hence, 
$F_{\tau }$ satisfies (2) in case $(3/2)l<\degw f_3\leq 2l$. 
We have already shown that 
$F_{\tau }$ satisfies the assumption of (3) 
in case $l<\degw f_3\leq (3/2)l$. 
Since the last condition in (3) is the same as (SU6), 
$F_{\tau }$ satisfies (3) in this case. 
We show that 
$f_3^{\w }$ does not belong to 
$k[f_1^{\w },f_2^{\w }]$ as required in (1). 
By (P8), 
$f_3^{\w }$ does not belong to 
$k[f_1^{\w }]$ nor $k[f_2^{\w }]$. 
Since $\degw f_3\leq \degw f_1$ by (P7), 
we have $\degw f_3<\degw f_1+\degw f_2=\deg f_1f_2$. 
Hence, 
$f_3^{\w }$ does not belong to 
$k[f_1^{\w },f_2^{\w }]$. 
This proves that $F_{\tau }$ satisfies (1) in case 
$2l<\degw f_3\leq sl$. 
Therefore, 
$F$ admits a reduction of 
one of the types I, II and III 
if $(f_1,f_2)\neq (g_1,g_2)$. 
\end{proof}

\section{Remarks}\setcounter{equation}{0}
\label{sect:last}

In closing, 
we make some remarks on Shestakov-Umirbaev reductions. 
As established in Section~\ref{sect:proof}, 
for each $F\in \T _k\kx $ with $\deg _{\w }F>|\w |$, 
there exists a sequence $(G_i)_{i=0}^r$ 
of elements of $\T _k\kx $ for some $r\in \N $ 
such that $G_0=F$, $\degw G_r=|\w |$, 
and $G_{i+1}$ is an elementary reduction 
or a {\it quasi} Shestakov-Umirbaev 
reduction of $G_i$ for each $i$. 
We have a more precise result as follows.

\begin{corollary}\label{cor:well-ordered}
For each $F\in \T _k\kx $ with $\deg F>|\w |$, 
there exists a sequence $(G_i)_{i=0}^r$ 
of elements of $\T _k\kx $ for some $r\in \N $ 
such that $G_0=F$, $\degw G_r=|\w |$, 
and $G_{i+1}$ is an elementary reduction 
or a Shestakov-Umirbaev 
reduction of $G_i$ for each $i$. 
\end{corollary}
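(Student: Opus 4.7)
The plan is to prove this by transfinite induction on $\degw F$, with termination guaranteed by the well-ordering of $\Sigma$ from Lemma~\ref{lem:|omega|}(ii), and with the inductive step supplied directly by Theorem~\ref{thm:main1}. Given tame $F$ with $\degw F>|\w|$, Theorem~\ref{thm:main1} produces a successor $G_1$ that is either an elementary reduction or a Shestakov--Umirbaev reduction of $F$. If $\degw G_1=|\w|$, one takes $r=1$; otherwise $\degw G_1<\degw F$, $G_1$ is again tame (verified below), so the induction hypothesis applied to $G_1$ supplies a tail $(H_1,\ldots,H_{r'})$ with $\degw H_{r'}=|\w|$, and the concatenation $(F,G_1,H_1,\ldots,H_{r'})$ is the sequence sought. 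Well-foundedness of $\Sigma$ makes this induction legitimate: the $\w$-degrees of the successors strictly decrease in $\Sigma$ and must therefore reach $|\w|$ in finitely many steps.

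The two substantive verifications for the inductive step are that $G_1\in\T_k\kx$ and that $\degw G_1<\degw F$. For an elementary reduction $G_1=F\circ E$, both are trivial. For a Shestakov--Umirbaev reduction, by definition $(F_\sigma,G_{1,\sigma})$ satisfies (SU1)--(SU6) for some $\sigma\in\sym_3$. Condition (SU1) writes the components of $G_{1,\sigma}$ as $g_1=f_1+af_3^2+cf_3$, $g_2=f_2+bf_3$, and $g_3=f_3+\phi$ with $\phi\in k[g_1,g_2]$, so $G_{1,\sigma}$ is obtained from $F_\sigma$ by composing successively with three elementary automorphisms. Conjugating by the affine permutation automorphism associated to $\sigma$, one exhibits $G_1$ itself as a composition of tame factors with $F$, so $G_1\in\T_k\kx$ whenever $F\in\T_k\kx$. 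The strict decrease $\degw G_1<\degw F$ is exactly Property (P6), established in Section~\ref{sect:SUred} as a direct consequence of (SU1)--(SU6).

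There is no real obstacle, since Theorem~\ref{thm:main1} already delivers a bona fide Shestakov--Umirbaev reduction rather than the weaker quasi variant used in the machinery of Section~\ref{sect:proof}. The corollary is thus a clean iteration of the main theorem, and the only bookkeeping is the preservation of tameness and the strict drop in $\w$-degree at each step. Had Theorem~\ref{thm:main1} only yielded quasi Shestakov--Umirbaev reductions, one would need Proposition~\ref{prop:equivalence}(i) to upgrade each quasi reduction of $H$ to an honest Shestakov--Umirbaev reduction $G\circ E_1\circ E_2$ (with $\degw$ unchanged at the first elementary step and preserved at the second), but as stated no such extra step is required, and the resulting chain has precisely the form asserted in the corollary.
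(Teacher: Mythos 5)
Your proposal is correct and follows essentially the same route as the paper: the paper argues by taking a minimal-degree counterexample in the well-ordered set $\Sigma$ (Lemma~\ref{lem:|omega|}(ii)), applies Theorem~\ref{thm:main1} to get a reduction $G$, and uses (P6) (plus the trivial elementary case) for the strict degree drop — which is exactly your induction phrased contrapositively. Your explicit check via (SU1) that a Shestakov--Umirbaev reduction of a tame automorphism is again tame is a point the paper leaves implicit, but otherwise the arguments coincide.
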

\begin{proof}\rm
Let ${\cal B}$ be the set of $F\in \T _k\kx $ 
with $\deg F>|\w |$ for which there does not exist 
a sequence as claimed. 
Suppose to the contrary that 
${\cal B}$ is not empty. 
Then, 
we can find $F\in {\cal B}$ such that 
$\deg F=\min \{ \deg H\mid H\in {\cal B}\} >|\w |$, 
since $\Sigma $ is a well-ordered set by 
Lemma~\ref{lem:|omega|}(ii). 
Since $F$ is tame, 
there exists $G\in \T _k\kx $ 
which is an elementary reduction 
or a Shestakov-Umirbaev reduction of $F$ 
by Theorem~\ref{thm:main1}. 
Then, $\deg G<\deg F$ by (P6). 
Hence, 
$G$ does not belong to ${\cal B}$ by the minimality of $\deg F$. 
It follows from the definition of ${\cal B}$ 
that $\deg G=|\w |$ or 
there exists a sequence as claimed for $G$. 
In either case, 
$F$ cannot be an element of ${\cal B}$, 
a contradiction. 
Therefore, ${\cal B}$ is empty. 
\end{proof}

For each $F\in \T _k\kx $ with $\degw F>|\w |$ 
and a sequence ${\cal G}=(G_i)_{i=0}^r$ 
as in Corollary~\ref{cor:well-ordered}, 
we define $\SU _{\w }(F;{\cal G})$ 
to be the number of $i\in \{ 1,\ldots ,r\} $ 
such that $G_{i+1}$ is a Shestakov-Umirbaev reduction of $G_i$. 
We define the {\it Shestakov-Umirbaev number} 
$\SU _{\w }(F)$ for the weight $\w $ 
to be the minimum among $\SU _{\w }(F;{\cal G})$ for the sequences 
${\cal G}=(G_i)_{i=0}^r$ as in Corollary~\ref{cor:well-ordered}. 
It may be an interesting question 
to ask whether $\SU _{\w }(F;{\cal G})=\SU _{\w }(F)$ 
for any $F\in \T _k\kx $ and ${\cal G}=(G_i)_{i=0}^r$.

In case $G_i$ admits a Shestakov-Umirbaev reduction, 
the possibilities for $G_{i+1}$ are limited 
as described in the following propositions.

\begin{proposition}\label{prop:uniqueness}
If $(F,G^1)$ and $(F,G^2)$ satisfy 
the Shestakov-Umirbaev condition for $F,G^1,G^2\in \TT $, 
then $g_i^1=g_i^2$ for $i=1,2$, 
and $g_3^1-g_3^2$ is contained in $k[g_2^1]$, 
where $G^j=(g_1^j,g_2^j,g_3^j)$ 
for $j=1,2$. 
\end{proposition}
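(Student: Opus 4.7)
The plan is to proceed in two stages: first deduce $g_1^1=g_1^2$ and $g_2^1=g_2^2$ from the uniqueness clause of (P11), and then analyze $\eta:=g_3^1-g_3^2$ by an induction on $\w$-degree. For the first stage, (SU1) expresses $g_1^j=f_1+a_jf_3^2+c_jf_3$ and $g_2^j=f_2+b_jf_3$ with $a_j,b_j,c_j\in k$; these match the form considered in (P11) with $\psi_j=0$ and $d_j=0$, so in particular $\psi_j\in k$. Because each $(F,G^j)$ satisfies the quasi Shestakov--Umirbaev condition (a consequence of the Shestakov--Umirbaev condition), the uniqueness assertion at the end of (P11), applied with $G=G^1$ and $G'=G^2$, forces $(a_1,b_1,c_1)=(a_2,b_2,c_2)$. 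Hence $g_1^1=g_1^2$ and $g_2^1=g_2^2$; I write $g_1,g_2$ for these common values and let $s,\delta$ be as in (P1), so $\degw g_1=s\delta$ and $\degw g_2=2\delta$.

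For the second stage, set $\eta=g_3^1-g_3^2$; by (SU1), $\eta\in k[g_1,g_2]$. From (SU4)--(SU6) together with the formulas for $\degw g_1$ and $\degw g_2$, I read off the two bounds
$$
\degw\eta<\degw f_3\leq s\delta \qquad \text{and}\qquad \degw\eta<(s-2)\delta+\degw dg_1\wedge dg_2.
$$
I will show that any $\phi\in k[g_1,g_2]$ satisfying both of these bounds belongs to $k[g_2]$, by induction on $\degw\phi$ in the well-ordered value set $\Sigma$ of Lemma~\ref{lem:|omega|}(ii); the base $\phi=0$ is trivial, and both bounds are clearly inherited when a $g_2$-monomial is subtracted. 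In the inductive step, with $\phi\neq 0$, the first task is to establish $\phi^{\w}\in k[g_1^{\w},g_2^{\w}]$: if not, then $\degw\phi<\degw^{\{g_1,g_2\}}\phi$, and Lemma~\ref{lem:degS2}(ii) applied with $f=g_2$, $g=g_1$ forces $\degw\phi\geq(s-2)\delta+\degw dg_1\wedge dg_2$, contradicting the second bound. The hypotheses of that lemma are verified because $\degw g_2<\degw g_1$, $\degw\phi\leq\degw g_1$ via the first bound, and $g_1^{\w}\notin k[g_2^{\w}]$ since $s$ is odd.

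Once $\phi^{\w}\in k[g_1^{\w},g_2^{\w}]$ is known, the relation $(g_1^{\w})^2\approx(g_2^{\w})^s$ from (P1) lets me write every element of this subring as $p(g_2^{\w})+g_1^{\w}q(g_2^{\w})$. Because $s$ is odd, the $\w$-degrees appearing in $p(g_2^{\w})$ are even multiples of $\delta$, while those in $g_1^{\w}q(g_2^{\w})$ are odd multiples of $\delta$ and lie in $s\delta+2\Zn\delta$; these two sets of $\w$-degrees are disjoint. Since $\phi^{\w}$ is $\w$-homogeneous of $\w$-degree less than $s\delta$, it must take the form $\phi^{\w}=\alpha(g_2^{\w})^j=(\alpha g_2^j)^{\w}$ for some $\alpha\in k\sm \zs $ and $j\in \Zn $. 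Then $\phi':=\phi-\alpha g_2^j\in k[g_1,g_2]$ has strictly smaller $\w$-degree with both bounds preserved, so the induction hypothesis yields $\phi'\in k[g_2]$, whence $\phi\in k[g_2]$. The step I expect to be most delicate is the parity bookkeeping isolating the $k[g_2^{\w}]$-component of $\phi^{\w}$ in the $\w$-graded ring $k[g_1^{\w},g_2^{\w}]$ — verifying that the $\w$-homogeneous slices of $\w$-degree less than $s\delta$ are exactly the $k$-lines spanned by the pure $g_2^{\w}$-monomials $(g_2^{\w})^j$; after that point, the argument reduces cleanly to the uniqueness clause in (P11), the conditions (SU4)--(SU6), and a single invocation of Lemma~\ref{lem:degS2}(ii).
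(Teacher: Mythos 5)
Your proposal is correct, and its first half is exactly the paper's argument: both deduce $g_1^1=g_1^2$ and $g_2^1=g_2^2$ from the uniqueness clause at the end of (P11) applied with $\psi ,d\in k$. Where you diverge is in handling $\eta =g_3^1-g_3^2$. The paper disposes of it in one step: assuming $\eta \in k[g_1,g_2]\sm k[g_2]$, the bound $\degw \eta <\degw f_3\leq \degw g_1$ from (SU5), (SU4) and the second clause of the ``frequently used fact'' after Theorem~\ref{thm:inequality} (if $\degw \phi <\degw f$ and $\phi \notin k[g]$, then $\degw \phi <\degw ^S\phi $) give $\degw \eta <\degw ^U\eta $ for $U=\{ g_1,g_2\} $, and then Lemma~\ref{lem:degS1}(i) yields $\degw \eta \geq \degw g_1-\degw g_2+\degw dg_1\wedge dg_2$, contradicting (SU6) directly. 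You instead use only the first clause of that fact (the case $\phi ^{\w }\notin k[g_1^{\w },g_2^{\w }]$, settled via Lemma~\ref{lem:degS2}(ii), where one should note that the odd exponent produced there must equal $s$ by comparing degrees with (SU3)), and you treat the remaining case $\phi ^{\w }\in k[g_1^{\w },g_2^{\w }]$ by hand: the relation $(g_1^{\w })^2\approx (g_2^{\w })^s$ with $s$ odd makes the degrees $2j\delta $ and $(s+2j)\delta $ of the spanning monomials $(g_2^{\w })^j$ and $g_1^{\w }(g_2^{\w })^j$ pairwise distinct, so a homogeneous element of degree $<s\delta $ is a scalar multiple of some $(g_2^{\w })^j$, and you peel off $\alpha g_2^j$ and induct on $\degw $ in the well-ordered set $\Sigma $ of Lemma~\ref{lem:|omega|}(ii). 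All the verifications go through (both bounds are preserved under the subtraction, the base case is degenerate, and the application of Lemma~\ref{lem:degS2}(ii) is legitimate since $\degw \phi <s\delta =\degw g_1$ and $g_1^{\w }\notin k[g_2^{\w }]$ by (SU$3'$)). The trade-off: the paper's route is shorter because the quoted fact implicitly does your peeling; your route is longer but makes the graded structure of $k[g_1^{\w },g_2^{\w }]$ explicit and needs only the weaker clause of the fact, at the cost of an induction and the parity bookkeeping, plus the slightly heavier Lemma~\ref{lem:degS2}(ii) in place of Lemma~\ref{lem:degS1}(i).
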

\begin{proof}\rm
By (SU1), 
there exist $a^j,b^j,c^j\in k$ 
such that $g_1^j=f_1+a^jf_3^2+c^jf_3$ 
and $g_2^j=f_2+b^jf_3$ for $j=1,2$. 
By the last statement of (P11), 
it follows that 
$a^1=a^2$, $b^1=b^2$ and $c^1=c^2$. 
Hence, 
we have $g_i^1=g_i^2$ for $i=1,2$. 
Put $\phi :=g_3^1-g_3^2=(g_3^1-f_3)+(f_3-g_3^2)$. 
Then, 
$\phi $ belongs to $k[g_1^1,g_2^1]=k[g_1^2,g_2^2]$, 
since so does $g_3^j-f_3$ for $j=1,2$ by (SU1). 
Suppose to the contrary that 
$\phi $ belongs to $k[g_1^1,g_2^1]\sm k[g_2^1]$. 
Then, since 
$\degw \phi \leq \max \{ \degw g_3^1, \degw g_3^2\} 
<\degw f_3\leq \degw g_1^1$ 
by (SU5) and (SU4), 
we get $\degw \phi <\degw ^U\phi $, 
where $U=\{ g_1^1,g_2^1\} $. 
In view of (SU3), 
it follows from Lemma~\ref{lem:degS1}(i) that 
\begin{align*}
\degw \phi \geq 2\degw g_1^1+\degw dg_1^1\wedge dg_2^1
-\degw g_1^1-\degw g_2^1=\degw g_1^1-\degw g_2^1+\degw dg_1^1\wedge dg_2^1. 
\end{align*}
Since $\deg \phi \leq \max \{ \degw g_3^1, \degw g_3^2\} $, 
this contradicts (SU6). 
Therefore, 
$g_3^1-g_3^2$ belongs to $k[g_2^1]$. 
\end{proof}

The following proposition gives a necessary condition 
on automorphisms 
to admit both an elementary reduction 
and a Shestakov-Umirbaev reduction simultaneously.

\begin{proposition}\label{prop:not er}
Assume that $(F,G)$ satisfies the Shestakov-Umirbaev condition 
for $F,G\in \TT $. 
Then, 
$f_i^{\w }$ does not belong to $k[S_i]^{\w }$ 
for $i=1$ if 
$f_1^{\w }\not\approx (f_3^{\w })^2$, 
for $i=2$, 
and for $i=3$ if $(f_1,f_2)\neq (g_1,g_2)$. 
\end{proposition}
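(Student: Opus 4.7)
My strategy is to prove the three assertions separately, each by contradiction: assume $f_i^{\w}\in k[S_i]^{\w}$, pick $\phi\in k[S_i]$ with $\phi^{\w}=f_i^{\w}$ and $\deg\phi=\deg f_i$ (such a $\phi$ is obtained as a suitable $k$-linear combination of elements of $k[S_i]$ of $\w$-degree $\deg f_i$), and then invoke the appropriate structural property from Theorem~\ref{thm:SUreduction}: (P4) for $i=1$, (P9) for $i=2$, (P10) for $i=3$. Throughout I use the odd integer $s\geq 3$ and $\delta=(1/2)\deg g_2$ from (P1), the equality $\deg f_2=2\delta$ from (P3), and the bounds $\delta<\deg f_i\leq s\delta$ from (P7). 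The crucial quantitative observation, needed for $i=1$ and $i=3$, is that $\deg f_1>(s-1)\delta$ in every scenario: this is immediate when $\deg f_1=s\delta$, and when $\deg f_1<s\delta$ property (P5) forces $s=3$ and $\deg f_1>(5/2)\delta>2\delta=(s-1)\delta$.

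The case $i=2$ is direct from (P9): any $\phi\in k[S_2]$ with $\phi^{\w}=f_2^{\w}$ has $\deg\phi=2\delta>0$, so (P9) produces $\phi=b'f_3+d'$ with $b',d'\in k$, and necessarily $b'\neq 0$; therefore $\phi^{\w}=b'f_3^{\w}$, placing $f_2^{\w}$ in $k[f_3^{\w}]$, contrary to (P8).

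For $i=3$ I first observe that $(f_1,f_2)\neq(g_1,g_2)$ implies $k[g_1,g_2]\neq k[S_3]=k[f_1,f_2]$. Indeed, by (SU1) one has $g_1-f_1=af_3^2+cf_3$ and $g_2-f_2=bf_3$, and $(f_1,f_2)\neq(g_1,g_2)$ forces $(a,b,c)\neq(0,0,0)$; hence at least one of $g_1-f_1$, $g_2-f_2$ is a nonzero element of $k[f_3]$, which cannot belong to $k[f_1,f_2]$ because $f_1,f_2,f_3$ are algebraically independent over $k$. This allows me to apply (P10) to $\phi\in k[f_1,f_2]$ with $\phi^{\w}=f_3^{\w}$ and $\deg\phi=\deg f_3\leq\deg f_1$ (by (P7)), obtaining $\phi=c''f_1+\psi''$ with $\psi''\in k[f_2]$ and $\deg\psi''\leq(s-1)\delta<\deg f_1$. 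If $c''=0$ then $\phi=\psi''\in k[f_2]$ and $f_3^{\w}\in k[f_2^{\w}]$, contradicting (P8). If $c''\neq 0$ then (P10) forces $\deg\phi=\deg f_1$, hence $\deg f_3=\deg f_1$, and since $\deg\psi''<\deg f_1$ the top-degree component of $\phi$ is just $c''f_1^{\w}$; therefore $f_3^{\w}\in k[f_1^{\w}]$, again contradicting (P8).

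The case $i=1$ is analogous. Assume $f_1^{\w}\not\approx(f_3^{\w})^2$ and take $\phi\in k[f_2,f_3]$ with $\phi^{\w}=f_1^{\w}$ and $\deg\phi=\deg f_1\leq s\delta=\deg g_1$. By (P4), $\phi=a'f_3^2+c'f_3+\psi'$ with $\psi'\in k[f_2]$ and $\deg\psi'\leq(s-1)\delta<\deg f_1$, so $\psi'$ cannot contribute to the top of $\phi$. If $a'\neq 0$, then $\deg\phi=2\deg f_3=\deg f_1$ and $\phi^{\w}=a'(f_3^{\w})^2$, giving $f_1^{\w}\approx(f_3^{\w})^2$, contrary to the hypothesis. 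If $a'=0$, then since $\deg\psi'<\deg\phi$ one must have $c'\neq 0$ and $\deg f_3=\deg f_1$, whence $\phi^{\w}=c'f_3^{\w}$; this places $f_1^{\w}$ in $k[f_3^{\w}]$, which by the second clause of (P8) forces $f_1^{\w}\approx(f_3^{\w})^2$, again contradicting the hypothesis. The principal obstacle in the proof is identifying the correct structural property of Theorem~\ref{thm:SUreduction} in each case and verifying that the bound $\deg f_1>(s-1)\delta$ rules out any contribution from the $k[f_2]$-component of $\phi$; once this is done, direct degree comparisons with (P8) close each case in a few lines.
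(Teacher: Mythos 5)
Your proof is correct, but it takes a genuinely different route from the paper's. The paper never invokes (P4), (P9), (P10): for each $i$ it selects a generating pair $h_0,h_1$ of $k[S_i]$ (namely $(f_2,f_3)$ for $i=1$, $(f_1,f_2)$ for $i=3$, and $(f_3,f_1)$ or $(f_3,f_1-cf_3^2)$ for $i=2$), checks that $f_i^{\w}\notin k[h_0^{\w},h_1^{\w}]$ and that neither of $h_0^{\w},h_1^{\w}$ lies in the $k$-algebra generated by the other, and then applies Lemma~\ref{lem:degS2}(i) together with the wedge-degree bounds of (P12) to force $\deg_{\w}f_i>s\delta$, contradicting (P7); there the hypothesis $f_1^{\w}\not\approx(f_3^{\w})^2$ serves to show $f_1^{\w}\notin k[f_2^{\w},f_3^{\w}]$, and $(f_1,f_2)\neq(g_1,g_2)$ serves to guarantee $\deg_{\w}df_1\wedge df_2>s\delta$. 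You instead read off the shape of a hypothetical $\phi\in k[S_i]$ with $\phi^{\w}=f_i^{\w}$ from (P4), (P9), (P10) and close each case with (P8). All the preconditions you need do hold: $\deg_{\w}\phi=\deg_{\w}f_1\leq\deg_{\w}g_1=s\delta$ by (SU2) and (P1), $\deg_{\w}f_3\leq\deg_{\w}f_1$ by (P7), your observation that $(f_1,f_2)\neq(g_1,g_2)$ forces $(a,b,c)\neq(0,0,0)$ and hence $k[g_1,g_2]\neq k[S_3]$ (by algebraic independence of $f_1,f_2,f_3$) is exactly what unlocks (P10), and the bound $\deg_{\w}f_1>(s-1)\delta$ — trivial when $\deg_{\w}f_1=s\delta$, and following from (P5) together with $\deg_{\w}dg_1\wedge dg_2>0$ otherwise — correctly eliminates any contribution of the $k[f_2]$-part to the leading form. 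Your reduction of membership in $k[S_i]^{\w}$ to the existence of a single $\phi$ with $\phi^{\w}=f_i^{\w}$ coincides with the paper's and is justified as you indicate. What the two approaches buy: yours is shorter and avoids the paper's delicate substitute generator $f_1-cf_3^2$ in the $i=2$ case, at the cost of leaning on the heavier structural statements (P4), (P9), (P10); the paper's argument runs the degree machinery (Lemma~\ref{lem:degS2}(i) plus (P12)) once in a uniform pattern and needs only (P7), (P8), (P12) from Theorem~\ref{thm:SUreduction}.
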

\begin{proof}\rm
In each case, 
we will find 
$h_0,h_1\in k[S_i]$ such that $k[h_0,h_1]=k[S_i]$, 
$\gamma _i':=\degw dh_0\wedge dh_1>s\delta $, 
$h_j^{\w }$ does not belong to $k[h_l^{\w }]$ 
for $(j,l)=(0,1),(1,0)$, 
and $f_i^{\w }$ does not belong to $k[h_0^{\w },h_1^{\w }]$. 
Then, it follows that $f_i^{\w }$ 
does not belong to $k[S_i]^{\w }$. 
In fact, 
supposing that $f_i^{\w }=\phi ^{\w }$ 
for some $\phi \in k[S_i]=k[h_0,h_1]$, 
we have $\degw \phi <\degw ^{U}\phi $ for $U=\{ h_0,h_1\} $, 
since $\phi ^{\w }=f_i^{\w }$ 
does not belong to $k[h_0^{\w },h_1^{\w }]$. 
Since $h_j^{\w }$ does not belong to $k[h_l^{\w }]$ 
for $(j,l)=(0,1),(1,0)$, 
we get $\degw \phi >\gamma _i'$ by Lemma~\ref{lem:degS2}(i). 
Thus, 
$\deg f_i=\deg \phi >\gamma _i'>s\delta $. 
This contradicts (P7). 
Therefore, 
$f_i^{\w }$ 
does not belong to $k[S_i]^{\w }$ 
if such $h_0$ and $h_1$ exist.

We remark that 
$\gamma _i:=\deg f_j\wedge f_l>s\delta $ 
in each case, 
where $j,l\in \{ 1,2,3\} \sm \{ i\} $ with $j<l$. 
Actually, 
$\gamma _1>s\delta $ 
and $\gamma _2\geq \delta +\gamma _1>(s+1)\delta $ 
by the last two conditions of (P12). 
If $i=3$, then $(f_1,f_2)\neq (g_1,g_2)$ by assumption. 
Hence, the first condition of (P12) implies that
$\gamma _3$ is equal to one of 
$\deg f_3+\gamma _1$, $\gamma _2$ and $\gamma _1$, 
which are greater than $s\delta $. 

We set $(h_0,h_1)=(f_2,f_3)$ if $i=1$, 
and $(h_0,h_1)=(f_1,f_2)$ if $i=3$. 
Then, 
$k[h_0,h_1]=k[S_i]$ 
and $\gamma _i'=\gamma _i>s\delta $ 
in either case. 
Moreover, 
$h_j^{\w }$ does not belong to $k[h_l^{\w }]$ 
for $(j,l)=(0,1),(1,0)$ by (P8). 
We check that $f_i^{\w }$ does not belong to 
$k[h_0^{\w },h_1^{\w }]$. 
This holds for $i=3$ 
because $f_3^{\w }$ does not belong to 
$k[f_l^{\w }]$ for $l=1,2$ by (P8), 
and $\deg f_3\leq \deg f_1<\deg f_1f_2$ by (P7). 
Suppose to the contrary that 
$f_1^{\w }$ belongs to $k[f_2^{\w },f_3^{\w }]$. 
Then, 
$f_1^{\w }$ must belong to 
$k[f_2^{\w }]$ or $k[f_3^{\w }]$, 
since 
$$
\degw f_1\leq \degw g_1=s\delta =2\delta +(s-2)\delta 
<\degw f_2+\degw f_3 =\deg f_2f_3
$$
by (SU2) and (P2). 
It follows from (P8) that $f_1^{\w }$ 
does not belong to $k[f_2^{\w }]$, 
and so $f_1^{\w }$ belongs to $k[f_3^{\w }]$ 
and $f_1^{\w }\approx (f_3^{\w })^2$. 
This contradicts the assumption that 
$f_1^{\w }\not\approx (f_3^{\w })^2$. 
Thus, 
$f_1^{\w }$ does not belong to 
$k[h_0^{\w },h_1^{\w }]$ in case $i=1$. 
Therefore, 
$h_0$ and $h_1$ satisfy the required conditions, 
and thereby 
$f_i^{\w }$ does not belong to $k[S_i]^{\w }$ 
for $i=1,3$ as mentioned above.

In case $i=2$, 
set $h_0=f_3$, 
and $h_1=f_1$ if 
$f_1^{\w }\not\approx (f_3^{\w })^2$, 
while $h_1=f_1-cf_3^2$ otherwise, 
where $c\in k$ such that 
$f_1^{\w }=c(f_3^{\w })^2$. 
Then, %it follows that 
$k[h_0,h_1]=k[S_2]$ 
and $\gamma _2'=\gamma _2>(s+1)\delta $. 
If $f_1^{\w }\not\approx (f_3^{\w })^2$, 
then $h_1=f_1$, 
and so $h_j^{\w }$ does not belong to 
$k[h_l^{\w }]$ for $(j,l)=(0,1),(1,0)$ by (P8). 
If $f_1^{\w }\approx (f_3^{\w })^2$, then 
$f_1^{\w }$ belongs to $k[f_3^{\w }]$. 
By (P8), 
we get $s=3$ and $\deg h_0=\deg f_3=(3/2)\delta $. 
Since $\deg h_0+\deg h_1\geq \gamma _2'>(s+1)\delta =4\delta $ 
by (\ref{eq:ineq-wedge}), 
we have $\degw h_1>4\delta -(3/2)\delta =(5/2)\delta >\deg h_0$. 
Hence, 
$h_0^{\w }$ does not belong to $k[h_1^{\w }]$. 
It follows that 
$(5/2)\delta <\deg h_1=\deg (f_1-cf_3^2)<\deg f_3^2=3\delta $. 
Since $5/2 <(3/2)l<3$ does not hold for any $l\in \N $, 
we conclude that 
$h_1^{\w }$ does not belong to $k[h_0^{\w }]$. 
For both $h_1=f_1$ and $h_1=f_1-cf_3^2$, 
it holds that $\degw f_2=2\delta <\degw h_1$. 
Hence, 
$f_2^{\w }$ does not belong to 
$k[h_0^{\w },h_1^{\w }]\sm k[h_0^{\w }]$. 
By (P8), 
$f_2^{\w }$ does not belong to $k[h_0^{\w }]=k[f_3^{\w}]$. 
Thus, 
$f_2^{\w }$ does not belong to $k[h_0^{\w },h_1^{\w }]$. 
Therefore, $h_0$ and $h_1$ satisfy the required conditions, 
thereby $f_2^{\w }$ does not belong to $k[S_2]^{\w }$. 
\end{proof}

\section*{Appendix: Reductions of types I, II, III and IV}
\setcounter{equation}{0}

In this appendix, 
we explain that the following results are implicit in 
the theory of Shestakov-Umirbaev~\cite{SU2}:

(A) If $F\in \Aut _k\kx $ admits 
a reduction of one of the types I, II, III and IV, 
then $F$ admits none of the reductions of the other three types.

(B) If $F\in \Aut _k\kx $ admits a reduction of type IV, 
then there exists an elementary automorphism $E$ 
such that $F\circ E$ admits a reduction of type IV, 
but does not admit an elementary reduction.

From (A) and (B), 
it follows that, 
if there exists a tame automorphism admitting a reduction of type IV, 
then there exists a tame automorphism 
which is not affine and does not admit an elementary reduction 
nor any one of the reductions of types I, II and III\null. 
Actually, 
an automorphism admitting a reduction of type IV is not affine, 
and admits none of the reductions of types I, II and III by (A)\null. 
Theorem~\ref{thm:main1}, 
together with Proposition~\ref{prop:sured to 123}, 
implies that 
each tame automorphism but an affine automorphism 
admits an elementary reduction or 
a reduction of one of the types I, II and III\null. 
Thus, we obtain another proof of Theorem~\ref{thm:type IV} 
that no tame automorphism admits a reduction of type IV\null.

First, we show (A)\null. 
Recall the definitions of reductions of types I--IV 
(see the conditions (1)--(4) listed in Section~\ref{sect:remark}). 
If $F$ satisfies (1), 
then $\deg f_1<\deg f_3\leq \deg f_2$. 
Moreover, 
(1) implies that $\deg df_1\wedge df_2=\deg df_1\wedge df_3$ 
(cf.\ \cite[Proposition 1 (1)]{SU2}). 
If $F$ satisfies one of (2), (3) and (4), 
then $\deg f_3\leq \deg f_1<\deg f_2$, 
where $\deg f_3=\deg f_1$ holds only in case (2). 
Moreover, it follows that 
\begin{equation}\label{eq:67}
\deg df_1\wedge df_3=\deg dg_1\wedge dg_2+3l,\quad 
\deg df_2\wedge df_3=\deg df_1\wedge df_3+l
\end{equation}
in these cases (cf.\ \cite[Equations (6) and (7)]{SU2}). 

Now, 
suppose that 
$F$ satisfies one of (2), (3) and (4), 
but admits a reduction of type~I, 
i.e., 
$F_{\tau }$ satisfies (1) for some $\tau \in \sym _3$. 
Then, 
$\deg df_{\tau (1)}\wedge df_{\tau (2)}
=\deg df_{\tau (1)}\wedge df_{\tau (3)}$ as mentioned. 
It follows from the condition on the degrees of 
$f_1$, $f_2$ and $f_3$ that $\tau =(1,3)$. 
Hence, 
$\deg df_3\wedge df_2=\deg df_3\wedge df_1$, 
which contradicts the second equation of (\ref{eq:67}). 
If $F$ satisfies (3) or (4), 
and admits a reduction of type II, 
then $F$ satisfies (2) 
by the conditions on the degrees of $f_1$, $f_2$ and $f_3$. 
This is impossible, 
because $(3/2)l<\deg f_3$ in case (2), 
while $\deg f_3\leq (3/2)l$ in cases (3) and (4). 
Finally, 
we show that $F$ does not admit reductions of types III and IV 
simultaneously. 
Suppose that $F$ satisfies (4), 
and admits a reduction of type III\null. 
Then, $F$ satisfies (3), 
since $\deg f_3<\deg f_1<\deg f_2$ in both cases. 
We remark that $\alpha ,\beta ,\gamma \in k$ 
appearing in (3) and (4) are uniquely determined by $F$ 
(cf.~\cite[Proposition 3 (1), (2) and (3)]{SU2}), 
and hence so are $g_1$ and $g_2$. 
There exist $\sigma ^1,\sigma ^2\in k\sm \zs $ 
and $g^1,g^2\in k[g_1,g_2]\sm k$ such that 
$\deg dg_1\wedge dg_{3,i}<3l+\deg dg_1\wedge dg_2$ for $i=1,2$, 
$\deg g_{3,1}<l+\deg dg_1\wedge dg_2$, 
and $\deg (g_2-\mu g_{3,2}^2)\leq 2l$ for some $\mu \in k\sm \zs $, 
where $g_{3,i}=\sigma ^if_3+g^i$ for $i=1,2$. 
We claim that $\deg g_{3,1}<\deg f_3$. 
In fact, 
$\deg g_{3,1}<l+\deg dg_1\wedge dg_2$, 
while the first equation of (\ref{eq:67}) 
implies $\deg f_3\geq l+\deg dg_1\wedge dg_2$, 
since $\deg f_1+\deg f_3\geq \deg df_1\wedge df_3$ 
and $\deg f_1=2l$. 
Hence, 
$\deg g_{3,1}<\deg f_3\leq (3/2)l$. 
From $\deg (g_2-\mu g_{3,2}^2)\leq 2l$, 
we get $\deg g_{3,2}=(3/2)l$. 
It follows that 
$\phi :=\sigma ^2g^1-\sigma ^1g^2
=\sigma ^2g_{3,1}-\sigma ^1g_{3,2}$ 
is an element of $k[g_1,g_2]$ such that 
$\deg dg_1\wedge d\phi <3l+\deg dg_1\wedge dg_2$ 
and $\deg \phi =(3/2)l$. 
Since $\deg \phi <\deg g_i$ for $i=1,2$, 
and since $\phi $ is not an element of $k$, 
we have $\deg \phi <\deg ^U\phi $, 
where $U=\{ g_1,g_2\} $. 
As $\deg g_1=2l$ and $\deg g_2=3l$, 
it follows from Lemma~\ref{lem:degS2}(ii) that 
$\deg d\phi \wedge dg_1\geq 3l+\deg dg_1\wedge dg_2$, 
a contradiction. 
Therefore, 
$F$ does not admit reductions of types III and IV 
simultaneously. 
This completes the proof of (A).

Next, assume that $F$ satisfies (4). 
From the proof of \cite[Lemma 12]{SU2}, 
we know that each 
$a\in k[S_i]$ with $\deg a\leq \deg f_i$ 
can be written as follows: 
If $i=1$, then $a=\delta _1f_3$ 
(up to a constant term) for some $\delta _1\in k$. 
If $i=2$, then $a=\delta _1f_3^2+\sigma _1f_3+\mu _1f_1$ 
(up to a constant term) for some $\delta _1,\sigma _1,\mu _1\in k$. 
If $i=3$ and $(\alpha ,\beta ,\gamma )\neq (0,0,0)$, 
then $a$ is an element of $k$. 
It is also mentioned in the proof of \cite[Lemma 12]{SU2} that 
$(f_1,f_2+a,f_3)$ satisfies (4) 
for each $a\in k[S_2]$ with $\deg a\leq \deg f_2$. 
In fact, it is claimed that $(g_1,g_2+\mu _1g_1,g_3)$ 
is a ``predreduction" of type IV of $(f_1,f_2+a,f_3)$.

We deduce (B) from the facts above. 
The assertion is clear if 
$F$ does not admit an elementary reduction. 
So, 
assume that $\deg F\circ E<\deg F$ for some $E\in \E _i$, 
where $i\in \{ 1,2,3\} $. 
Then, 
$(F\circ E)(x_i)=f_i+a$ and $\deg (f_i+a)<\deg f_i$ 
for some $a\in k[S_i]$. 
Since $\deg a=\deg f_i$, 
we can write $a$ as stated in the preceding paragraph. 
Hence, 
if $i=1$, then $\deg a=\deg \delta _1f_3\leq (3/2)l$. 
Since $\deg a=\deg f_1=2l$, 
this is impossible. 
Thus, $i\neq 1$. 
If $i=2$, then $\deg a=\deg f_2>(5/2)l$. 
Since $\deg f_3\leq (3/2)l$, 
we have $\delta _1\neq 0$ and $\deg f_2=\deg a=2\deg f_3$. 
This implies that 
$\deg f_2=3l$ and $\deg f_3=(3/2)l$, 
for 
$\deg f_2=3l$ if $\deg f_3<(3/2)l$, 
and $\deg f_3=(3/2)l$ if $\deg f_2<3l$. 
If $i=3$, 
then $\alpha=\beta =\gamma =0$. 
and so $g_1=f_1$ and $g_2=f_2$. 
We show that $F\circ E$ admits a reduction of type IV, 
but does not admit an elementary reduction 
in cases $i=2$ and $i=3$.

Assume that $i=2$. 
Then, 
$\deg (f_2+a)<\deg f_2=3l$. 
Moreover, 
$F\circ E=(f_1,f_2+a,f_3)$ satisfies (4) as mentioned, 
in which $\alpha \in k$ involved in the condition cannot be zero, 
since $\deg (f_2+a)<3l$. 
By applying to $F\circ E$ the argument in the preceding paragraph, 
we know that there does not exist $E'\in \E _j$ 
with $\deg F\circ E\circ E'<\deg F\circ E$ 
for $j=1$, 
for $j=2$, since $\deg (f_2+a)\neq 3l$, 
and for $j=3$, since the constant $\alpha $ is not zero. 
Thus, 
$F\circ E$ does not admit an elementary reduction.

Assume that $i=3$. 
Without loss of generality, 
we may assume that 
$(F\circ E)(x_3)^{\w }$ 
does not belong to $k[f_1,f_2]^{\w }$ 
by replacing $E$ if necessary. 
We show that $F\circ E=(f_1,f_2,f_3+a)$ satisfies (4) 
by using the assumption that $F$ satisfies (4) 
for $\alpha=\beta =\gamma =0$. 
We claim that $\deg (f_3+a)\geq l+\deg dg_1\wedge dg_2$. 
In fact, if not, 
we can check that $(f_1,f_2+f_3,f_3)$ satisfies (3) and (4) 
by the assumption that $F$ satisfies (4) for $\alpha=\beta =\gamma =0$. 
This contradicts (A)\null. 
Hence, $l<\deg (f_3+a)\leq (3/2)l$, 
as required in the assumption of (4). 
Let $g':=g_3-\sigma (f_3+a)=\sigma f_3-g-\sigma (f_3+a)$. 
Then, 
$g'$ belongs to $k[g_1,g_2]=k[f_1,f_2]$, 
since so do $g$ and $a$. 
It follows that $\deg g'=(3/2)l$, 
since $\deg (f_3+a)<\deg f_3\leq (3/2)l$ and $\deg g_3=(3/2)l$. 
Hence, 
$g$ is not an element of $k$. 
Moreover, 
we can express 
$g_3=\sigma (f_3+a)+g'$. 
This shows that $F\circ E$ satisfies (4). 
Consequently, 
there does not exist $E'\in \E _j$ 
such that $\deg F\circ E\circ E'<\deg F\circ E$ 
for $j=1$, 
and for $j=2$, 
since $\deg (f_3+a)\neq (3/2)l$. 
This also holds for $j=3$ 
as we choose $E$ so that 
$(F\circ E)(x_3)^{\w }$ 
does not belong to $k[f_1,f_2]^{\w }$. 
Therefore, 
$F\circ E$ does not admit an elementary reduction. 
This completes the proof of~(B)\null.

\address{
Department of Mathematics \\ and Information Sciences\\ 
Tokyo Metropolitan University\\
1-1  Minami-Ohsawa, Hachioji \\
Tokyo 192-0397, Japan\\
}{
kuroda@tmu.ac.jp}

\end{document}